\newcommand{\Z}{\mathbb{Z}}
\newcommand{\Q}{\mathbb{Q}}
\newcommand{\C}{\mathbb{C}}
\newcommand{\N}{\mathbb{N}}
\renewcommand{\P}{\mathbb{P}}
\newcommand{\A}{\mathbb{A}}
\newcommand{\B}{\mathbb{B}}
\newcommand{\mmP}{\mathcal{P}}
\newcommand{\mmU}{\mathcal{U}}
\newcommand{\wC}{\widetilde{C}}
\newcommand{\wZ}{\widetilde{\Z}}
\newcommand{\bl}{\boldsymbol{l}}\newcommand{\bj}{\boldsymbol{j}}
\newcommand{\bi}{\boldsymbol{i}}
\newcommand{\bn}{\boldsymbol{n}}\newcommand{\bk}{\boldsymbol{k}}
\newcommand{\bun}{\boldsymbol{1}}
\newcommand{\bm}{\boldsymbol{m}}
\newcommand{\te}{\textrm}
\DeclareMathOperator*{\ch}{ch}
  \DeclareMathOperator*{\im}{im}
\DeclareMathOperator*{\length}{length}
\DeclareMathOperator*{\Sp}{Sp}
\DeclareMathOperator*{\tr}{tr}
\newcommand{\trn}{\tr\nolimits}
\newcommand{\Spn}{\Sp\nolimits}
\numberwithin{equation}{section}
\newtheorem{theo}[equation]{Theorem}
\newtheorem{prop}[equation]{Proposition}
\newtheorem{cor}[equation]{Corollary}
\newtheorem{lema}[equation]{Lemma}
\theoremstyle{definition}
\newtheorem{df}[equation]{Definition}
\newtheorem{obs}[equation]{Remark}
\newtheorem{ex}[equation]{Example}
\newenvironment{enumerate*}[1][{}]{\begin{itemize}}{\end{itemize}}
\newcommand{\vist}{\begin{flushright}
$\square$
\end{flushright}
}
\title{A chain morphism for Adams operations on rational algebraic K-theory}
\author{Elisenda Feliu}
\date{\today}
\email{efeliu@ub.edu}
\address{Gran Via de les Corts Catalanes, 585, 08007 Barcelona (Spain)}
\keywords{Adams operations, higher algebraic K-theory, chain complex of cubes}
\begin{document}

\begin{abstract} For any regular noetherian scheme $X$ and every $k\geq 1$,
we define a chain morphism $\Psi^k$ between two chain complexes whose homology
with rational coefficients is isomorphic to the algebraic $K$-groups of $X$
tensored by $\Q$. It is shown that the morphisms $\Psi^k$ induce in homology the Adams operations defined by
 Gillet and Soul{\'e} or the ones defined by Grayson.
\end{abstract}
\maketitle

\section*{Introduction}

Let $X$ be any scheme and let $\mathcal{P}(X)$ be the exact category of locally
free sheaves of finite rang over $X$. The algebraic $K$-groups of $X$, $K_n(X)$, are
defined as the Quillen K-groups of the category $\mmP(X)$, as given in \cite{Quillen}.

Several authors have equipped these groups with a $\lambda$-ring structure.
Then, the \emph{Adams operations} on each $K_n(X)$ are obtained
from the $\lambda$-operations by a universal polynomial formula on the
$\lambda$-operations. In the literature there are several
direct definitions of the Adams operations on the higher algebraic $K$-groups of a
scheme $X$.  By means of the homotopy theory of simplicial sheaves, Gillet and Soul\'e
 defined in \cite{GilletSouleFiltrations} Adams operations for any noetherian scheme of
finite Krull dimension. Grayson, in \cite{Grayson1}, constructed a
simplicial map inducing Adams operations on the K-groups of any
category endowed with a suitable tensor product, symmetric power
and exterior power. In particular, he constructed Adams operations
for the algebraic K-groups of any scheme $X$. Following the
methods of Schechtman in \cite{Sch}, Lecomte, in \cite{Lecomte},
defined Adams operations for the rational K-theory of any scheme
$X$ equipped with an ample family of invertible sheaves. They are
induced by a map in the homotopy category of infinite loop spectra.

The aim of this paper is to construct an explicit chain morphism which induces
the Adams operations on the higher algebraic $K$-groups tensored by $\Q$. Our main interest in this construction is to endow the rational higher arithmetic $K$-groups of an arithmetic variety with a (pre)-$\lambda$-ring structure, in order to pursue a higher arithmetic intersection theory program in Arakelov geometry.

At the moment, there are two different definitions for the higher arithmetic $K$-groups of an arithmetic variety, one suggested by Deligne and Soul\'e (see \cite{Soule2} $\S$III.2.3.4 and
\cite{Delignedeterminant}, Remark 5.4)  and the other given by Takeda in \cite{Takeda}. Both of them rely on an explicit representative of the Beilinson regulator. By the nature of both definitions, it is apparently necessary
to have a description of the Adams operations in algebraic K-theory in terms of a chain
morphism, compatible with the representative of the Beilinson regulator ``$\ch$'' given by Burgos and Wang in \cite{Burgos1}.
None of the explicit constructions of $\Psi^k$ known at the moment seem to be suitable for this purpose.
The chain morphism presented in this paper commutes with the morphism ``$\ch$''. In fact our definition has been highly influenced by the construction of ``$\ch$''.
The details of the application to higher arithmetic $K$-theory can be found in the author's PhD Thesis \cite{FeliuThesis}.

Consider the chain complex of cubes associated to the category
$\mmP(X)$. McCarthy in \cite{McCarthy}, showed  that the homology
groups of this complex, with rational coefficients, are isomorphic
to the rational algebraic K-groups of $X$.

We first attempted to find a homological version of Grayson's
simplicial construction using the complex of cubes, but this seems particularly difficult
from the combinatorial point of view.

The current approach is based on a simplification obtained by
using the \emph{transgressions of cubes by affine or projective
lines}, at the price of having to reduce to regular noetherian
schemes due to the fact that homotopy invariance or the Dold-Thom isomorphism for $K$-theory are required. This was Burgos and Wang's idea \cite{Burgos1} for the definition of a chain morphism representing Beilinson's regulator.

With this strategy, we first assign to a cube on $X$ a collection
of cubes defined either on $X\times (\P^1)^*$ or on $X\times
(\A^1)^*$, which have the property of being split in all
directions (and which we call \emph{split cubes}).  This gives a morphism  called the
transgression morphism ({\bf Proposition \ref{trans2}}).

Then, by a purely combinatorial formula on the Adams operations of
locally free sheaves, we give a formula for the Adams operations
on split cubes ({\bf Corollary \ref{adamssplitcor}}). The key
point is to use Gillet's idea, as presented by Grayson, of considering the secondary Euler
characteristic class of the Koszul complex associated to a locally
free sheaf of finite rank.

The composition of the transgression morphism with the Adams
operations for split cubes gives a chain morphism representing the
Adams operations for any regular noe\-the\-rian scheme of finite
Krull dimension ({\bf Theorem \ref{myadamsgood}}). The fact that our construction induces indeed
the Adams operations defined by Gillet and Soul\'e in \cite{GilletSouleFiltrations} and the ones defined by Grayson in \cite{Grayson1} follows
from a general result on the comparison of morphisms from algebraic $K$-theory to itself, given in \cite{Feliu1}.

The two constructions, with projective lines or with affine lines,
are completely analogous. One may choose the more suitable one in
each particular case. For instance, to define Adams operations on
the $K$-groups of a regular ring $R$, one may
 consider the definition with affine lines so as to remain in the category of affine schemes.
 On the other hand, if for instance our category of
schemes is the category of projective regular schemes, then the
construction with projective lines may be the appropriate one.

\medskip
The paper is organized as follows. In the first section, we introduce the
notation for multi-indices and (co)chain complexes. The complex of cubes is defined and a \emph{normalized} version, in the style of the normalized complex
associated to a cubical abelian group, is introduced.
In the next section we define Adams operations for \emph{split cubes}, that
is, for cubes which are split in all directions, by means of a
combinatorial formula on the Adams operations of locally free
sheaves of finite rank.
In the third section, the \emph{transgression morphism} is defined. We assign to every cube of locally free sheaves on $X$, a
collection of split cubes defined either on $X\times (\P^1)^*$
or on $X\times (\A^1)^*$.
Finally, in the last section we summarize the constructions provided in the previous sections so as to give
a representative of the Adams operations for regular noetherian schemes of finite Krull dimension.
It is shown that our
construction induces the Adams operations defined by Gillet and
Soul{\'e} in \cite{GilletSouleFiltrations}.

\medskip
\emph{Aknowledgement.} First of all, I would like to thank my thesis advisor,
Jos{\'e} Ignacio Burgos Gil, for his always good ideas and constant support
during the elaboration of this paper. I am very grateful to Damian R\"ossler who
suggested to me that it should be possible to use the transgressions by
projective lines to construct Adams operations for cubes. Finally, I would like
to thank Jos{\'e} Gil for helpful discussions about universes.

\section{The chain complex of cubes}

\subsection{Notation for multi-indices}\label{multiindices} We give here some notations on multi-indices
that will be used in the sequel.

Let $\mathfrak{I}$ be the set of all multi-indices of finite
length, i.e.
$$\mathfrak{I}=\{\boldsymbol{i}=(i_1,\dots,i_n)\in \N^n,\ n\in \N\}=
\bigcup_{k>0}\N^k.$$ For every $m\geq 0$, consider the set
$[0,m]:= \{0,\dots,m\}$. If $a\in [0,m]$ and
$l=1,\dots,n$, let $a_l\in [0,m]^n$ be the multi-index
$$(0,\dots,0,a,0,\dots,0),$$ that is, the multi-index where the
only non-zero entry is $a$ in the $l$-th position. We write
$\bun=(1,\dots,1)$ and more generally, if $r_1\leq r_2$, we define
$\bun_{r_1}^{r_2}$  to be the
multi-index with
$$(\bun_{r_1}^{r_2})_{i} =\left\{ \begin{array}{ll}
1 & \textrm{if }\ r_1 \leq i \leq r_2, \\ 0 & \textrm{otherwise}.
\end{array}\right.  $$

\begin{df} Let $\bi,\bj\in \N^n$. We fix the following notations for multi-indices:
\begin{enumerate}[(1)]
\item The \emph{length} of $\bi$
is the integer $\length(\bi):=n$. \item The \emph{characteristic} of $\bi$ is the multi-index
$\nu(\bi)\in \{0,1\}^n$,  defined by
$$\nu(\bi)_j = \left\{ \begin{array}{ll}
0 & \textrm{if }i_j=0, \\
1 & \textrm{otherwise}.
\end{array}\right.$$
\item The \emph{norm}  of $\bi$ is
defined by $|\bi|=i_1+\cdots +i_n$.
 If $1\leq l \leq n$, we denote
$|\bi|_l=i_1+\cdots +i_l$. \item \emph{Orders on the set of
multi-indices: }
\begin{enumerate*}[$\rhd$] \item We write  $\bi \geq \bj$, if
for all $r$, $i_r \geq j_r$. Otherwise we write $\bi \ngeq \bj$.
\item We denote by $\preceq$ the \emph{lexicographic order} on multi-indices. By
$\bi \prec \bj$ we mean $\bi \preceq
\bj$ and $\bi\neq \bj$.
\end{enumerate*}
 \item Let $1\leq l \leq n$ and $m\in \N$. Then, we define
$$
\begin{array}{lrcl}
\emph{Faces:} & \partial_l(\bi)&:=&(i_1,\dots,\hat{i_l},\dots,i_n). \\
\emph{Degeneracies:}  &  s_l^m(\bi)&:=& (i_1,\dots,i_{l-1},m,i_{l},\dots,i_n). \\
\emph{Substitution:}  &
\sigma_l^m(\bi)&:=&
s_l^m\partial_l(\bi)=(i_1,\dots,i_{l-1},m,i_{l+1},\dots,i_n).
 \end{array}
$$
In general, for any $\bl=(l_1,\dots,l_s)$ with $1\leq l_1 < \dots<
l_s \leq n$ and $\mathbf{m}=(m_1,\dots,m_s)\in \N^s$, we write
{\small $$\partial_{\bl}(\bi)=\partial_{l_1}\dots \partial_{l_s}(\bi),
\quad s_{\bl}^{\mathbf{m}}(\bi)=s_{l_1}^{m_1}\dots
s_{l_s}^{m_s}(\bi),\quad\textrm{and}\quad
\sigma_{\bl}^{\mathbf{m}}(\bi)=\sigma_{l_1}^{m_1}\dots
\sigma_{l_s}^{m_s}(\bi).$$ }\item If $\length(\bi)=l$ and
$\length(\bj)=r$, the \emph{concatenation}  of $\bi$ and $\bj$ is the multi-index of length
$l+r$ given by
 $$\bi \bj=(i_1,\dots,i_l,j_{1},\dots,j_r). $$
 \item Assume that $\bi
\in \{0,1\}^n$. The
\emph{complementary multi-index} of $\bi$ is the multi-index
$\bi^c :=\bun - \bi$, i.e.
$$(\bi^c)_r = \left\{ \begin{array}{ll}
0 & \textrm{if }i_r=1, \\
1 & \textrm{if }i_r=0.
\end{array}\right.$$
\item Assume that $\bi,\bj\in\{0,1\}^n$. We define their
\emph{intersection}   by
$$\bi \cap \bj = (i_1 \cdot j_1,\dots,i_n\cdot j_n),$$
and their   \emph{union} $\bi \cup
\bj$ by $$(\bi \cup \bj)_r = \max\{i_r,j_r\}.$$
 \end{enumerate}
\end{df}

\subsection{Iterated (co)chain complexes } Let $\mathcal{U}$ be some universe
(see \cite{SGA4}) and let $\mmP$ be a small additive category in $\mathcal{U}$,
with fixed zero object $0$.
\begin{df}
\begin{enumerate}[(i)]
\item A \emph{$k$-iterated cochain complex} $C^*=(C^{*},d^1,\dots,d^k)$ over
$\mmP$ is a $k$-graded object together with $k$ endomorphisms $d^1,\dots,d^k$
of multi-degrees $1_1,\dots,1_k$, respectively, such that for all $i,j$,
$d^id^i=0$ and $d^id^j=d^jd^i$. The endomorphism $d^i$ is called the $i$-th
\emph{differential} of $C^*$. \item  A \emph{$k$-iterated chain complex}
$C_*=(C_{*},d_1,\dots,d_k)$ over $\mmP$ is a $k$-graded object together with $k$
endomorphisms $d_1,\dots,d_k$,
  of multi-degrees $-1_1,\dots,-1_k$ respectively,
such that for all $i,j$, $d_id_i=0$ and $d_id_j=d_jd_i$. The endomorphism $d_i$
is called the $i$-th \emph{differential} of $C_*$.\item A \emph{(co)chain
morphism} is a collection of morphisms commuting with the differentials.
\end{enumerate}
\end{df}

If $C^*$ is a $k$-iterated cochain complex and $\bi$ a multi-index of length
$k-1$, then $C^{s_l^*(\bi)}$ is a cochain complex. In this way,  if $P$ is a
property of cochain complexes, we say that \emph{$C^*$ satisfies the property $P$
in the $l$-th direction}, if for all multi-indices $\bi$ of length $k-1$, the
cochain complex $C^{s_l^*(\bi)}$ satisfies $P$.

Let $C^*$ be a cochain complex. We will mainly refer to the two following properties of cochain complexes:
\begin{enumerate}[(i)]
\item The complex $C^*$ has \emph{finite length} if there exists $l_1<l_2$ such
that
$$ C^n=0,\qquad \textrm{ for }n<l_1,\textrm{ and }n>l_2.$$ In this case, the difference $l_2-l_1$ is called
the \emph{length} of $C$.
 \item The complex $C^*$ is \emph{acyclic}, if $H^n(C)=0$ for all $n$.
\end{enumerate}

\begin{df} Let $(B^*,d^1,d^2)$ be a $2$-iterated cochain complex. The \emph{simple complex} of $B^*$
is the  cochain complex whose graded groups are
$$B^n:= \bigoplus_{r+s=n} B^{r,s},$$
and whose differential is
\begin{eqnarray*}
B^{r,s} &\xrightarrow{d} &  B^{r+1,s}\oplus B^{r,s+1}
\\ b &\mapsto & d^1(b)+(-1)^{r}d^2(b).
 \end{eqnarray*}
\end{df}
Observe that if $B$ is acyclic in one direction, then so is the simple complex.

\begin{ex}[Tensor product] Assume that in the category $\mmP$ there is a notion of tensor product.
In our applications, $\mmP$ will be the category of abelian groups or the category of locally free sheaves on a scheme.
Let $(A^*,d_A)$ and $(B^*,d_{B})$ be two cochain complexes. The
tensor product $(A \otimes B)^*$ is the $2$-iterated cochain
complex with
$$(A\otimes B)^{n,m} = A^{n}\otimes B^{m},$$
and differentials $(d_A\otimes id_{B},id_{A}\otimes d_{B})$. By
abuse of notation, the associated simple complex will also be
denoted by $(A \otimes B)^*$.
\end{ex}

\subsection{The chain complex of iterated cochain complexes}\label{iterated}
Let $\mmP$ be a $\mmU$-small abelian category.
We denote by $IC_n(\mmP)$ the set of
$n$-iterated cochain complexes over $\mmP$, concentrated in non-negative
degrees, of finite length and acyclic in all directions. Let $\Z IC_n(\mmP)$ be
the free abelian group generated by $IC_n(\mmP)$. Then,
$$\Z IC_*(\mmP)=\bigoplus_{n\geq 0} \Z IC_n(\mmP)$$
is a graded abelian group, which can be made into a chain complex in the following way. For every
$\bl=(l_1,\dots,l_n)$, we denote by $IC_n^{\bl}(\mmP)  \subseteq IC_n(\mmP)$
the set of $n$-iterated cochain complexes of length $l_i$ in the $i$-th
direction.
\begin{df}\label{faces2} Let $A^*\in IC_n^{\bl}(\mmP)$. For every $i=1,\dots,n$ and $j\in [0,l_i]$,
the $(n-1)$-iterated cochain complex $\partial_i^{j}(A)^*$ is defined by
$$\partial_i^{j}(A)^{\bm}:=A^{s_i^j(\bm)}\in IC_{n-1}(\mmP)\qquad \forall \bm.$$
It is called the \emph{$j$-th face of $A^*$ in the $i$-th direction}. If $j> l_i$, we set
$\partial_i^{j}(A)^{\bm}:=0.$
\end{df}
It follows from the definition that  for all $j\in [0,l_{i}]$ and $k \in [0,l_{r}]$,
\begin{equation}\label{commutfaces}
\partial_{i}^{j}\partial^{k}_{r}=
\partial^{k}_{r-1}\partial^{j}_{i},\quad \textrm{if }i\leq r.
\end{equation}
Then, there is a well-defined group morphism
\begin{eqnarray*}
\Z IC_n(\mmP) & \xrightarrow{d}&  \Z IC_{n-1}(\mmP) \\
A^* & \mapsto & \sum_{i=1}^{n}\sum_{j\geq 0} (-1)^{i+j}\partial_i^j(A)^*.
\end{eqnarray*}
Since $d^2=0$, the pair  $(\Z IC_*(\mmP),d)$ is a chain complex. It is called
\emph{the chain complex of iterated cochain complexes}.

\begin{obs}
Observe that we have obtained a \emph{chain complex} whose $n$-graded piece is
generated by $n$-iterated \emph{cochain complexes}. We will try to be very precise
on this duality, so as not to confuse the reader.
\end{obs}

\subsection{The chain complex of cubes}\label{cubessection}
We are interested in the chain complex of iterated cochain complexes
obtained restricting to the iterated cochain complexes of length $2$ in all
directions. We write for simplicity,
$$C_n(\mmP)= IC_n^{\boldsymbol{2}}(\mmP)\quad \textrm{and}\quad \Z C_n(\mmP)=
\Z IC_n^{\boldsymbol{2}}(\mmP).$$ The differential of $\Z IC_*(\mmP)$ induces a
differential on $\Z C_*(\mmP)=\bigoplus_n \Z C_n(\mmP),$ making the inclusion
$\Z C_*(\mmP)\hookrightarrow \Z IC_*(\mmP)$ a chain morphism.
An
element of $C_n(\mmP)$ is called an \emph{$n$-cube.}

\begin{obs}\label{cubes22}\label{cubes2}
Let
$$ \varepsilon: 0\rightarrow E_0 \rightarrow E_1 \rightarrow E_2 \rightarrow 0$$
be an exact sequence of $(n-1)$-cubes. That is, for every $\bj\in
\{0,1,2\}^{n-1}$, the sequence
$$0\rightarrow E_0^{\bj}\rightarrow E_1^{\bj} \rightarrow E_2^{\bj}\rightarrow 0 $$
is exact. Then, for all $i=1,\dots,n$, there is an $n$-cube
$\widetilde{E}$, with
$$\partial_i^j \widetilde{E}= E_j.$$ This cube is called the
\emph{cube obtained from $\varepsilon$ along the $i$-th
direction}.
\end{obs}

\begin{df}
For every $i=1,\dots,n$ and $j=0,1$, one   defines
\emph{degeneracies}
$$s_i^j:\Z C_{n-1}(\mmP)\rightarrow \Z C_{n}(\mmP),$$
by setting for every $E\in C_{n-1}(\mmP)$,
$$s_i^j(E)_{\bj}=\left\{ \begin{array}{ll} 0 & j_i\neq j,j+1 \\
E_{\partial_i(\bj)}& j_i=j,j+1.
\end{array}  \right.   $$
That is, $s_i^j(E)$ is the $n$-cube obtained from the exact
sequences of $n$-cubes
$$
\begin{array}{ccc}
0\rightarrow E \xrightarrow{=} E \rightarrow 0 \rightarrow 0, & \textrm{if }
j=0, \\
0\rightarrow 0 \rightarrow E \xrightarrow{=} E \rightarrow 0, &
\textrm{if } j=1,
\end{array}
$$
along the $i$-th direction. An element $F\in C_n(\mmP)$ is called
\emph{degenerate}  if for some $i$ and
$j$, $F \in \im s_i^j.$
\end{df}
For any $k,l\in \{0,1,2\}$ and for all
$u,v\in \{0,1\}$, the following identities are satisfied:
\begin{equation}\label{identities3}
 \begin{array}{rcl} \partial_i^l \partial_{j}^k &=& \left\{ \begin{array}{ll}
\partial_{j}^k \partial_{i+1}^l & \textrm{if }j\leq i, \\
\partial_{j-1}^k \partial_{i}^l & \textrm{if }j>
i.
\end{array}\right. \\
 \partial_{i}^0 s_i^0 & =&  \partial_{i}^1 s_i^0 = id, \quad \partial_{i}^1 s_i^1
= \partial_{i}^2 s_i^1 = id, \quad \partial_{i}^2 s_i^0 = \partial_{i}^0 s_i^1
= 0, \\  \partial_i^l s_j^u &=& \left\{
\begin{array}{ll} s_{j}^u \partial_{i-1}^l & \textrm{if }j< i, \\ s_{j-1}^u
\partial_{i}^l
& \textrm{if }j> i.
\end{array}\right. \\
s_i^us_j^v & = & s_{j+1}^vs_{i}^u \textrm{ if }j\geq i.
\end{array}
\end{equation}

 Let
$$\Z D_n(\mmP)=\sum_{i=1}^{n} s_i^0(\Z C_{n-1}(\mmP))+ s_i^1(\Z
C_{n-1}(\mmP))\subset \Z C_n(\mmP).$$ Since the differential of a
degenerate cube is also degenerate, the differential of $\Z
C_*(\mmP)$ induces a differential on $\Z D_*(\mmP)$ making the
inclusion arrow $\Z D_*(\mmP) \hookrightarrow \Z C_*(\mmP)$ a chain morphism. The quotient complex
$$\wZ C_*(\mmP)=
\Z C_*(\mmP)/\Z D_*(\mmP)$$ is called the \emph{chain complex of
cubes} in $\mmP$. Nevertheless, by abuse of language, the complex $\Z C_*(\mmP)$ is usually referred as to the chain complex of cubes as well.

\begin{prop}\label{cubes}(McCarthy) Let $\mmP$ be a small abelian category and
let $K_n(\mmP)$ denote the Quillen algebraic K-groups of $\mmP$. Then, for all
$n\geq 0$, there is an isomorphism
$$H_n(\wZ C(\mmP),\Q)\cong K_n(\mmP)\otimes \Q. $$
\end{prop}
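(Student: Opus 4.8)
The plan is to reduce to McCarthy's theorem from \cite{McCarthy}, which identifies the rational K-theory of an exact (in particular abelian) category $\mmP$ with the homology of a complex built from cubes. The main point of the argument is that the complex $\wZ C_*(\mmP)$ defined above, being the normalized complex of cubes, computes the same homology. I would begin by recalling the precise statement McCarthy proves: the cubical object $n\mapsto C_n(\mmP)$ with the faces $\partial_i^j$ ($i=1,\dots,n$, $j=0,1,2$) is a \emph{triplicial} (i.e., $3$-cubical in each of the $n$ slots, or in McCarthy's language an $S_\bullet$-type) set whose associated chain complex $\Z C_*(\mmP)$ — equipped with the alternating-sum differential $d=\sum_{i,j}(-1)^{i+j}\partial_i^j$ — has homology (rationally) the $K$-groups of $\mmP$. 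This is exactly the content that Proposition \ref{cubes} quotes as ``(McCarthy)'', so I may assume it; what remains is to pass from $\Z C_*(\mmP)$ to the quotient $\wZ C_*(\mmP)=\Z C_*(\mmP)/\Z D_*(\mmP)$.

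The key step, then, is to show that the subcomplex $\Z D_*(\mmP)$ of degenerate cubes is acyclic, or more precisely that the projection $\Z C_*(\mmP)\twoheadrightarrow \wZ C_*(\mmP)$ is a quasi-isomorphism (with $\Q$, or even integral, coefficients). This is the standard Eilenberg--Zilber / Dold--Kan type normalization statement, and I would prove it by the usual filtration argument: the identities \eqref{identities3} — namely $\partial_i^0 s_i^0=\partial_i^1 s_i^0=\mathrm{id}$, $\partial_i^1 s_i^1=\partial_i^2 s_i^1=\mathrm{id}$, $\partial_i^2 s_i^0=\partial_i^0 s_i^1=0$, together with the commutation rules $\partial_i^l s_j^u$ and $s_i^u s_j^v$ — are exactly the simplicial-type relations one needs. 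Concretely, one filters $\Z D_*(\mmP)$ by $\Z D_*^{(p)}(\mmP)=\sum_{i\leq p}(s_i^0+s_i^1)(\Z C_{*-1}(\mmP))$ and builds an explicit contracting homotopy on each graded piece $\Z D^{(p)}/\Z D^{(p-1)}$ using the corresponding degeneracy $s_p^j$ and face $\partial_p^{j'}$; the relations in \eqref{identities3} guarantee that this homotopy is well-defined on the quotient and that $h\,d+d\,h=\mathrm{id}$ there. Summing over the filtration (finite in each fixed total degree $n$, since there are only $n$ directions) gives acyclicity of $\Z D_*(\mmP)$, hence the long exact sequence of the pair forces $H_n(\Z C_*(\mmP))\xrightarrow{\sim}H_n(\wZ C_*(\mmP))$ for all $n$. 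Tensoring with $\Q$ and combining with McCarthy's identification yields $H_n(\wZ C(\mmP),\Q)\cong K_n(\mmP)\otimes\Q$.

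I expect the main obstacle to be purely bookkeeping: the category of $n$-cubes is not literally a simplicial abelian group but a ``multi-cubical'' object with three faces and two degeneracies in each of $n$ independent directions, so the classical normalization lemma must be applied one direction at a time, and one has to check that the homotopies for different directions can be composed coherently (equivalently, that the associated graded of the multi-direction filtration is again a complex of the same type in fewer directions). The relations displayed in \eqref{identities3} are precisely what makes this induction go through, so the verification is routine but notationally heavy; none of it requires anything beyond the structure already set up. An alternative, slicker route — which I would mention but not pursue in detail — is to observe that $n\mapsto \Z C_n(\mmP)$ is the chain complex associated to a cubical object in the category of complexes, invoke the cubical Dold--Kan / Eilenberg--Zilber theorem to identify the normalized subcomplex as a deformation retract, and then cite McCarthy directly for the unnormalized side; this packages the homotopy-construction step into a known theorem at the cost of translating McCarthy's combinatorial model into the language of cubical objects.
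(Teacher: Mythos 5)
Your proof takes a noticeably different route from the paper's. The paper's entire proof of this proposition is the single line ``See \cite{McCarthy}'': McCarthy's theorem, as cited, is already stated for the reduced complex $\wZ C_*(\mmP)=\Z C_*(\mmP)/\Z D_*(\mmP)$, i.e.\ the quotient by degenerate cubes, so no further argument is needed or given. You instead attribute to McCarthy a statement about the \emph{unnormalized} complex $\Z C_*(\mmP)$ and then insert a second step — acyclicity of $\Z D_*(\mmP)$ via a filtration — to pass to the quotient. That second step is not wrong as mathematics (the quasi-isomorphism $\Z C_*\to\wZ C_*$ is indeed true), but it is not what Proposition \ref{cubes} is citing, and it is essentially the content of the paper's separate Proposition \ref{N}, which the author keeps distinct from the McCarthy citation precisely because they are logically different statements (\ref{N} identifies $NC_*(\mmP)$ with $\wZ C_*(\mmP)$; Corollary \ref{cubes1} then combines \ref{cubes} and \ref{N}). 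So you have, in effect, folded part of \ref{N} into a proof of \ref{cubes}, while also misstating what the cited reference proves. If one is going to redo the normalization here, the paper's method in \ref{N} — applying the cubical Dold--Kan decomposition $C_*=N^lC_*\oplus D_*$ twice, once for the cubical structure with degeneracy $s^0$ and once for the one with $s^1$, using that $\sum_i\im s_i^0\cap\sum_i\im s_i^1=\{0\}$ — is cleaner than your single mixed filtration $\Z D^{(p)}=\sum_{i\leq p}(\im s_i^0+\im s_i^1)$, whose graded pieces involve both degeneracies at once and whose contracting homotopy is accordingly more delicate to pin down. In short: your conclusion is right and your normalization idea is sound, but as a proof of Proposition \ref{cubes} it proves the wrong half of the story; the intended proof is simply to invoke McCarthy's theorem verbatim.
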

\begin{proof}
See \cite{McCarthy}.
\end{proof}

\subsection{The normalized complex of cubes}\label{sectionnormalized}
Let $\mathcal{P}$ be a small exact category in some universe $\mmU$. In this
section, we show that there is a normalized complex  for the complex of cubes,
in the style of the normalized complex associated to a simplicial or cubical abelian group. That is, we construct a complex
$NC_*(\mmP)\subset \Z C_*(\mmP)$, which maps isomorphically to $\wZ C_*(\mmP)$.

\begin{prop}\label{N} Let  $NC_*(\mmP)  \subset \Z C_*(\mmP)$ be
any of the following complexes:
$$N_nC(\mmP) = \left\{ \begin{array}{l} \bigcap_{i=1}^{n} \ker \partial_i^0 \cap
\bigcap_{i=1}^{n} \ker \partial_i^2, \vspace{0.25cm} \\  \bigcap_{i=1}^{n} \ker
\partial_i^0 \cap \bigcap_{i=1}^{n} \ker (\partial_i^1-\partial_i^0)=
\bigcap_{i=1}^{n} \ker
\partial_i^0 \cap  \bigcap_{i=1}^{n} \ker
\partial_i^1,\vspace{0.25cm} \\ \bigcap_{i=1}^{n} \ker (\partial_i^1-\partial_i^2) \cap
\bigcap_{i=1}^{n} \ker \partial_i^2=  \bigcap_{i=1}^{n} \ker \partial_i^1
\cap \bigcap_{i=1}^{n} \ker \partial_i^2, \vspace{0.25cm} \\
\bigcap_{i=1}^{n} \ker (\partial_i^1-\partial_i^2) \cap  \bigcap_{i=1}^{n} \ker
(\partial_i^0-\partial_i^1). \end{array} \right. $$
  Then, the composition
$$NC_*(\mmP) \hookrightarrow \Z C_*(\mmP) \twoheadrightarrow \wZ C_*(\mmP)=\Z C_*(\mmP)/\Z D_*(\mmP)$$
is an isomorphism of chain complexes.
\end{prop}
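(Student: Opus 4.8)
The plan is to reduce the statement to the classical normalization theorem for cubical abelian groups, which we apply one direction at a time. The key observation is that the chain complex of cubes $\Z C_*(\mmP)$ is, in each fixed direction $i$, a cubical abelian group in the sense of the degeneracy and face relations in \eqref{identities3}: the operators $\partial_i^0,\partial_i^1,\partial_i^2$ play the role of the two faces and the ``extra'' face of a cubical object (one can think of $\partial_i^0,\partial_i^2$ as the $0$- and $1$-faces and $\partial_i^1$ as the connection/diagonal, or symmetrize), and $s_i^0,s_i^1$ are the degeneracies. The subcomplex $\Z D_*(\mmP)$ is precisely the subcomplex generated by all degeneracies. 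So for a single direction, the statement that a given intersection of kernels maps isomorphically to the quotient by degeneracies is exactly the standard fact that the normalized subcomplex of a cubical abelian group is a complement to the degenerate subcomplex.

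First I would treat the case $n$ fixed and a single direction, say direction $1$: set $A = \Z C_n(\mmP)$, viewed as an abelian group with the three face maps and two degeneracies in direction $1$ landing in $\Z C_{n-1}(\mmP)$ (all other directions frozen). Let $D^{(1)} = s_1^0(\Z C_{n-1}) + s_1^1(\Z C_{n-1})$. Using the identities $\partial_1^0 s_1^0 = \partial_1^1 s_1^0 = \id$, $\partial_1^1 s_1^1 = \partial_1^2 s_1^1 = \id$, $\partial_1^2 s_1^0 = \partial_1^0 s_1^1 = 0$ from \eqref{identities3}, one checks directly that each of the four candidate kernel-intersections in direction $1$ is a complement to $D^{(1)}$ in $A$: given $x$, one subtracts an explicit combination of $s_1^0(\partial_1^? x)$ and $s_1^1(\partial_1^? x)$ to kill the relevant faces, and the face identities guarantee both that the corrector lies in $D^{(1)}$ and that the difference lies in the prescribed kernels; injectivity on the kernel subgroup follows because any element of $D^{(1)}$ lying in, e.g., $\ker\partial_1^0\cap\ker\partial_1^2$ must be zero by the same identities (expand it as $s_1^0(u)+s_1^1(v)$ and apply the four relations). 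This is the Dold--Kan / cubical normalization computation; I would just write out the idempotent $e_1 = \id - (\text{degeneracy correction})$ explicitly.

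Then I would iterate: writing $N C_*(\mmP)$ as the intersection over all directions $i=1,\dots,n$, I define projectors $e_i$ direction by direction. The commutation relations between faces and degeneracies in different directions (again from \eqref{identities3}, e.g. $\partial_i^l s_j^u = s_j^u \partial_{i-1}^l$ or $s_{j-1}^u\partial_i^l$, and $s_i^u s_j^v = s_{j+1}^v s_i^u$) ensure the $e_i$ commute with one another, so $e = e_1 e_2\cdots e_n$ is a well-defined idempotent whose image is $N C_n(\mmP)$ and whose kernel is $\Z D_n(\mmP)$; hence the composite $N C_*(\mmP)\hookrightarrow \Z C_*(\mmP)\twoheadrightarrow \wZ C_*(\mmP)$ is an isomorphism of graded groups. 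Finally I would check that $e$ (equivalently each $e_i$) is a chain map for the total differential $d = \sum_i \sum_j (-1)^{i+j}\partial_i^j$; this is automatic once each $e_i$ commutes with every $\partial_r^j$, which the identities \eqref{identities3} supply. The equalities asserted among the various forms of the intersections (e.g. $\bigcap\ker\partial_i^0\cap\bigcap\ker(\partial_i^1-\partial_i^0) = \bigcap\ker\partial_i^0\cap\bigcap\ker\partial_i^1$) are immediate: on $\ker\partial_i^0$ the conditions $\partial_i^1 = \partial_i^0$ and $\partial_i^1 = 0$ coincide.

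The main obstacle I anticipate is purely bookkeeping rather than conceptual: getting the signs and the precise form of the degeneracy-correction right so that the projector $e_i$ genuinely lands in the chosen kernel intersection (the four versions of $N_nC$ require four slightly different correctors built from $s_i^0,s_i^1$), and then verifying the mutual commutativity $e_ie_j = e_je_i$ across directions with the index shifts $i\mapsto i\pm 1$, $j\mapsto j+1$ appearing in \eqref{identities3}. Once the one-direction case is pinned down, the iteration and the chain-map property are formal consequences of the commutation identities, so I would spend the bulk of the written proof on the single-direction computation and the verification that the resulting idempotent has the stated image and kernel.
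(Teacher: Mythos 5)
Your overall strategy is correct and would yield the proposition, but it is genuinely different from the one the paper uses, so a comparison is in order. The paper does not try to handle the three faces and two degeneracies per direction all at once; instead it observes that one can put \emph{two} different honest cubical abelian group structures on the graded group $\{\Z C_n(\mmP)\}_n$ — for the first, faces $\partial_i^0$ and $\partial_i^1-\partial_i^2$ with degeneracy $s_i^0$; for the second, faces $\partial_i^2$ and $\partial_i^1-\partial_i^0$ with degeneracy $s_i^1$ — and that each induces the same total differential. One then invokes the classical normalization theorem (the statement $N^lC_* \oplus D_* = C_*$ for a cubical abelian group) as a black box \emph{twice in succession}: first to pass to $N^1C_*\cong \Z C_*/\Z D^1_*$ with $\Z D^1_* = \sum\im s_i^0$, and then, after checking the second structure descends to $N^1C_*$ and to the quotient, to pass to $N^2N^1C_*\cong \Z C_*/\Z D_*$, using $\sum\im s_i^0 \cap \sum\im s_i^1 = 0$ at the last step. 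You instead work direction by direction: construct a single idempotent $e_i = \id - s_i^0(\cdot) - s_i^1(\cdot)$ per direction that kills \emph{both} $s_i^0$- and $s_i^1$-degeneracies at once, and then multiply. This is essentially a from-scratch reproof of the classical normalization, adapted to three faces and two degeneracies; it is perfectly valid and arguably more self-contained, at the cost of doing by hand (the corrector, idempotency, image and kernel identification, and the pairwise commutativity $e_ie_j=e_je_i$) what the paper's two-structure trick lets the classical theorem do for you. Either route is fine; the paper's is slicker, yours more elementary.

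One step in your proposal is misstated and should be repaired. You write that the chain-map property ``is automatic once each $e_i$ commutes with every $\partial_r^j$'', but this commutation does not hold: taking for concreteness the first candidate, $\partial_i^0 e_i = 0$ whereas $e_i\partial_i^0 = \partial_i^0 - s_i^0\partial_i^0\partial_{i+1}^0 - s_i^1\partial_i^0\partial_{i+1}^2$ is not identically zero, and in fact $\partial_i^1 e_i = \partial_i^1 - \partial_i^0 - \partial_i^2$. What is true, and what you actually need, is that $N C_*(\mmP)$ and $\Z D_*(\mmP)$ are each stable under the total differential $d$ (the latter is noted in the paper; the former follows from the face--face commutation rules of \eqref{identities3} exactly as in the degenerate case). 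Together with the group-level direct sum decomposition $\Z C_n = N C_n \oplus \Z D_n$ supplied by your idempotent, this forces $e$ to commute with $d$, hence the composite $NC_*\hookrightarrow \Z C_*\twoheadrightarrow \wZ C_*$ to be an isomorphism of chain complexes — without ever asserting that $e_i$ commutes with the individual face maps.
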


\begin{proof}
We will see that the complex of cubes can be obtained by associating two
different cubical structures to the collection of abelian groups $\{\Z C_n(\mmP)\}_n$.

We start by recalling the definitions and results on cubical abelian groups that we need. Given a cubical abelian group $C_{\cdot}$, with face maps denoted by $\delta_i^j$ and degeneracy maps by $\sigma_i$, the chain complex associated to $C_{\cdot}$, $C_*$, is the chain complex whose $n$-th graded piece is
$C_n$  and whose differential $\delta:C_n\rightarrow C_{n-1}$ is given by $\delta= \sum_{i=1}^n \sum_{j=0,1}(-1)^{i+j}\delta_i^j.$
Let $D_n\subset C_n$ be the subgroup of \emph{degenerate elements} of $C_n$, i.e. the elements that lie
in the image of $\sigma_i$ for some $i$. The quotient
$\widetilde{C}_*:=C_*/D_*$ is a chain complex, whose differential is induced by $\delta$.
For $l=0$ or $1$, the \emph{normalized chain complex} associated to $C_{\cdot}$, $N^lC_*$, is the chain complex whose $n$-th graded group is
$$N^lC_n := \bigcap_{i=1}^{n} \ker \delta_i^l,$$  and whose
differential is the one induced by the inclusion $N^lC_n\subset
C_n$.
A well-known result states that for any cubical abelian group  $C_{\cdot}$, there is a decomposition of chain complexes
$C_*= N^lC_* \oplus D_*. $ As a consequence, we obtain that the
composition
\begin{equation}\label{norm}
\phi: N^lC_* \hookrightarrow C_* \twoheadrightarrow \widetilde{C}_*
\end{equation}
is an isomorphism of chain complexes.

In our situation, we associate two
different cubical structures to the collection of abelian groups $\{\Z C_n(\mmP)\}_n$, we apply twice the normalized construction to $\Z C_{\cdot}(\mmP)=\{\Z C_n(\mmP)\}_n$ and finally we obtain a
subcomplex $NC_*(\mmP)\subset \Z C_*(\mmP)$ which is isomorphic to $\Z
C_*(\mmP)/\Z D_*(\mmP)$.

The two different cubical structures  of
$\Z C_{\cdot}(\mmP)$ are given as follows.
\begin{enumerate*}[$\blacktriangleright$] \item For the \emph{first structure} consider
$$\tilde{\partial}_i^0=\partial_i^0,\quad \tilde{\partial}_i^1=\partial_i^1-\partial_i^2,
\quad\textrm{and} \quad \tilde{s}_i=s_i^0.$$ \item For the \emph{second
structure} consider
$$\tilde{\partial}_i^0=\partial_i^2,\quad \tilde{\partial}_i^1=\partial_i^1-\partial_i^0,
\quad\textrm{and} \quad \tilde{s}_i=s_i^1.$$
\end{enumerate*}
By the identities \eqref{identities3}, both collections of faces
and degeneracies satisfy the identities of a cubical structure on
$\Z C_{\cdot}(\mmP)$. Moreover, the differential of $\Z C_*(\mmP)$
induced by both structures is exactly the differential of the
complex of cubes.

By the first structure, we obtain an isomorphism of chain complexes
$$ N^1C_*(\mmP) \hookrightarrow \Z C_*(\mmP) \twoheadrightarrow \Z C_*(\mmP)/\Z D^1_*(\mmP)$$
where
$$
 N^1
C_*(\mmP)= \bigcap_{i=1}^{n}\ker \partial_i^0 \ \textrm{ or }\
\bigcap_{i=1}^{n}\ker (\partial_i^1-\partial_i^2),\quad\textrm{and}\quad
 \Z D_n^1(\mmP) = \sum_{i=1}^n \im s_i^0.
$$
The reader can check that the second structure induces a cubical
structure on $N^1C_{\cdot}(\mmP)$ and on $\Z C_{\cdot}(\mmP)/\Z
D^1_{\cdot}(\mmP)$ compatible with the map $N^1C_{\cdot}(\mmP)
\rightarrow \Z C_{\cdot}(\mmP)/\Z D^1_{\cdot}(\mmP)$. Therefore,
there is an isomorphism of complexes
$$N^2N^1C_*(\mmP) \hookrightarrow N^2 C_*(\mmP) \twoheadrightarrow N^2(\Z C_*(\mmP)/\Z D^1_*(\mmP)). $$
Applying \eqref{norm} to $\Z
C_{\cdot}(\mmP)/\Z D^1_{\cdot}(\mmP)$, we obtain an isomorphism of
complexes
$$N^2(\Z C_*(\mmP)/\Z D^1_*(\mmP)) \hookrightarrow  \Z C_*(\mmP)/\Z D^1_*(\mmP) \twoheadrightarrow
\frac{\Z C_*(\mmP)/\Z D^1_*(\mmP)}{\sum_{i=1}^{*}\im s_*^1}.$$
Since for every $n$, $\sum_{i=1}^n\im s_i^0 \cap \sum_{i=1}^n\im
s_i^1=\{0\}$, we obtain that
$$\frac{\Z C_*(\mmP)/\Z D^1_*(\mmP)}{\sum_{i=1}^{*}\im s_*^1}=\Z C_*(\mmP)/\Z D_*(\mmP).$$

Hence, $NC_*(\mmP)=N^2N^1C_*(\mmP)$ is isomorphic to $\wZ C_*(\mmP)$. The four
candidates in the statement of the proposition appear combining the two options
for the normalized complex, for every structure.
\end{proof}

\begin{obs}
There are actually two other possible cubical structures on $\Z
C_{\cdot}(\mmP)$. One can consider the structure with
$$\tilde{\partial}_i^0=\partial_i^0+\partial_i^2,\quad
\tilde{\partial}_i^1=\partial_i^1\quad\textrm{and}\quad\tilde{s}_i=s_i^0\quad
\textrm{or}\quad s_i^1.$$ Therefore, further normalized complexes
are obtained. As long as we consider one cubical structure with
$\tilde{s}_i=s_i^0$ and another cubical structure with
$\tilde{s}_i=s_i^1$, we obtain different normalized complexes
associated to $\Z C_*(\mmP)$.
\end{obs}

 We fix, from now on,
the \emph{normalized chain complex} $N C_*(\mmP)$ to be the one
with $n$-th graded piece given by
$$N_n C(\mmP) := \bigcap_{i=1}^{n} \ker\partial_i^0\cap \bigcap_{i=1}^{n}
\ker \partial_i^2\subset \Z C_n(\mmP),$$ and differential induced
by the differential of $\Z C_*(\mmP)$.

As a consequence of proposition \ref{cubes} and proposition \ref{N} we obtain the following corollary.

\begin{cor}\label{cubes1} Let $\mmP$ be a small abelian category and
let $K_n(\mmP)$ denote the Quillen algebraic K-groups of $\mmP$. Then, for all
$n\geq 0$, there is an isomorphism
$$H_n(NC_*(\mmP),\Q)\cong K_n(\mmP)\otimes \Q. $$
\end{cor}

\begin{obs}
Let $X$ be a scheme and let $\mmP=\mmP(X)$
be the category of locally free sheaves of finite rank on $X$.
Fix a universe $\mmU$ so that $\mmP(X)$ is $\mathcal{U}$-small for all $X$.
We will denote the complexes $\Z C_*(\mmP), N_*C(\mmP),\Z IC_*(\mmP),\dots$ and so on simply by
$\Z C_*(X)$, $N_*C(X)$, $\Z IC_*(X),\dots$.
\end{obs}

\section{Adams operations for split cubes}\label{adamssplit}
Let $X$ be any scheme. In this section, for every $k\geq 1$, we
construct a chain morphism  $\Psi^k$ from the complex of split
cubes to the complex of cubes on $X$.

We divide the construction into three steps. We first construct
the chain complex of split cubes on $X$, $(\Z \Spn_*(X),d)$. We
then define an intermediate chain complex $(\Z G^k(X)_*,d_s)$ and
a chain morphism
$$\Z G^k(X)_*\xrightarrow{\mu \circ \varphi} \Z C_*(X).$$
Finally,  for every $n$, we construct a morphism
$$\Psi^k:\Z \Spn_n(X) \rightarrow \Z G^k(X)_n.$$
Its composition with $\mu\circ\varphi$,
$$\mu\circ\varphi\circ\Psi^k: \Z \Spn_*(X) \rightarrow  \Z C_*(X),$$
gives the definition of the Adams operations over split cubes.

Let $X$ be a scheme and let
$\mmP=\mmP(X)$ be the category of locally free sheaves of finite
rank on $X$. Recall that the notation on multi-indices was
introduced in section \ref{multiindices}.

\subsection{Split cubes}\label{split}
We introduce here the \emph{complex of split cubes}, which
plays a key role in the definition of the Adams operations for
arbitrary cubes. Roughly speaking, split cubes are the cubes which are
split in all directions.

For every $\bj=(j_1,\dots,j_n)\in \{0,1,2\}^n$, let $u_1<\dots <
u_{s(\bj)}$ be the indices such that $j_{u_i}=1$ and let
\begin{equation}
u(\bj)=(u_1,\dots,u_{s(\bj)}). \end{equation}
 Observe that $s(\bj)$
is the length of $u(\bj)$.

\begin{df}
Let $\{ E^{\bi}\}_{\bi\in \{0,2\}^n}$ be a collection of locally free sheaves
on $X$, indexed by $\{0,2\}^n$. Let $[E^{\bi}]_{\bi\in \{0,2\}^n}$ be the
$n$-cube given by:
\begin{enumerate*}[$\rhd$]
\item The $\bj$-component is
$$\bigoplus_{\bm\in\{0,2\}^{s(\bj)} } E^{\sigma_{u(\bj)}^{\bm}(\bj)}, \quad \bj\in \{0,1,2\}^n.$$
\item
The morphisms are compositions of the following canonical morphisms:
$$\begin{array}{rclcrcl}
A \oplus B & \twoheadrightarrow & A, & \qquad &  A\oplus B & \xrightarrow{\cong} & B \oplus A,\\
A & \hookrightarrow &  A \oplus B,  & \qquad & A \oplus (B\oplus C) & \xrightarrow{\cong}
 & (A\oplus B) \oplus C.
\end{array}$$
\end{enumerate*}
 \nopagebreak[0]
 An $n$-cube of this form is called a \emph{direct sum $n$-cube}.
\end{df}

\begin{obs}
In the previous definition, the direct sum is taken in the
lexicographic order on $\{0,2\}^{s(\bj)}$.
\end{obs}

Observe that, if $\bj\in \{0,2\}^n$, then the $\bj$-component of
$[E^{\bi}]_{\bi\in \{0,2\}^n}$ is exactly $E^{\bj}$. Hence, this
$n$-cube has at the ``corners'' the given collection of objects,
and we fill the ``interior'' with the appropriate direct sums.

\begin{ex}
For $n=1$, the $1$-cube $[E^0,E^2]$ is the exact sequence
$$ E^0\rightarrow E^0\oplus E^2 \rightarrow E^2.$$
\end{ex}
\begin{ex}
For $n=2$, if  $E^{00},E^{02},E^{20},E^{22}$ are locally free sheaves on $X$,
then the $2$-cube  $\left[\begin{array}{cc}E^{00} & E^{02} \\
E^{20} & E^{22}\end{array}\right]$ is the $2$-cube {\small
$$\xymatrix{ E^{00} \ar[r] \ar[d] & E^{00}\oplus E^{02} \ar[r]
\ar[d] & E^{02}
  \ar[d] \\
 E^{00}\oplus E^{20} \ar[r] \ar[d] & E^{00}\oplus E^{02}\oplus E^{20}
     \oplus E^{22} \ar[r] \ar[d] & E^{02} \oplus E^{22}
  \ar[d] \\
 E^{20}\ar[r] & E^{20}\oplus E^{22} \ar[r] & E^{22}.}$$ }
\end{ex}

\begin{df}\label{splitcube}
\begin{enumerate*}[$\blacktriangleright$] \item Let $E$ be an $n$-cube. The \emph{direct sum $n$-cube associated
to $E$}, $\Sp(E)$, is the $n$-cube
$$\Sp(E):=[E^{\bj}]_{\bj\in \{0,2\}^n}.$$  \item A \emph{split $n$-cube} is
a couple $(E,f)$, where $E$ is an $n$-cube and $f:\Sp(E)\rightarrow E$ is an
isomorphism of $n$-cubes such that $f^{\bj}=id$ if $\bj\in \{0,2\}^{n}$.
The morphism $f$ is called the \emph{splitting} of $(E,f)$.
\item Let
$$\Z \Sp\nolimits _n(X):= \Z \{\textrm{split } n-\textrm{cubes on } X \},$$
and let $\Z \Sp_*(X)=\bigoplus_n \Z \Sp_n(X)$.
\end{enumerate*}
\end{df}

\begin{ex}\label{splitn2}
For $n=2$, a split cube $(E,f)$ consists of a $2$-cube $E$ together with an isomorphism { \small $$
\begin{array}{c}\xymatrix@C=16pt{ E^{00} \ar[r] \ar[d] & E^{00}\oplus
E^{02} \ar[r] \ar[d] & E^{02}
  \ar[d] \\
 E^{00}\oplus E^{20} \ar[r] \ar[d] & E^{00}\oplus  E^{02} \oplus E^{20}
\oplus E^{22} \ar[r] \ar[d] & E^{02} \oplus E^{22}
  \ar[d] \\
 E^{20}\ar[r] & E^{20}\oplus E^{22} \ar[r] & E^{22}}
\end{array}  \xrightarrow{f} \begin{array}{c}
 \xymatrix@C=16pt{ E^{00} \ar[r] \ar[d] & E^{01} \ar[r]
\ar[d] & E^{02}
  \ar[d] \\
 E^{10} \ar[r] \ar[d] & E^{11} \ar[r] \ar[d] & E^{12}
  \ar[d] \\
 E^{20}\ar[r] & E^{21}  \ar[r] & E^{22},}\end{array}$$ }
 which is the identity at the ``corners''.
 \end{ex}

We endow $\Z \Sp_*(X)$ with a chain complex structure. That is, we define a differential morphism
$$\Z \Sp\nolimits_n(X)\rightarrow \Z \Sp\nolimits_{n-1}(X).$$
Let $E$ be an arbitrary $n$-cube. Observe that if $j=0,2$, then,   for all $l=1,\dots,n$,
$$\partial_l^j\Sp(E)=\Sp(\partial_l^jE).  $$
 Therefore, if $(E,f)$ is a split $n$-cube,
$$\partial_l^j(E,f):=(\partial_l^jE,\partial_l^j f)$$ is a split $(n-1)$-cube. By contrast, in general
\begin{equation}\label{partial1}\partial_l^1\Sp(E)\neq
\Sp(\partial_l^1E).\end{equation} However, if $E$ is a split
$n$-cube, $\partial_l^1E$ is also isomorphic to
$\Sp(\partial_l^1E)$, i.e. it is also split.

In order to illustrate the forthcoming definition, we will start
by defining the face $\partial_l^1(E,f)=(\partial_l^1E,\hat{f})$
for $n=2$. Let $(E,f)$ be a split $2$-cube as in example \ref{splitn2}.
Then,
$$\partial_1^1(E)=E^{10}\rightarrow E^{11} \rightarrow E^{12} \quad \textrm{and}\quad \Sp(\partial_1^1E)=E^{10}\rightarrow E^{10}\oplus E^{12} \rightarrow E^{12}.$$
We define the morphism $\hat{f}^{1}:E^{10}\oplus E^{12} \xrightarrow{\cong} E^{11}$,
as the composition
$$E^{10}\oplus E^{12} \xrightarrow{(f^{10})^{-1}\oplus (f^{12})^{-1}}
E^{00}
  \oplus E^{02}\oplus E^{20} \oplus E^{22}  \xrightarrow{f^{11}} E^{11}.$$

Let $(E,f)$ be a split $n$-cube. For every $\bj\in
\{0,1,2\}^n$, we define a morphism
$$\hat{f}^{\bj}: \Sp(\partial_l^1E)^{\bj}\rightarrow
(\partial_l^1E)^{\bj}$$   as the composition of the isomorphisms
$$\xymatrix{\bigoplus_{\bm\in\{0,2\}^{s(\bj)}
}(\partial_l^1E)^{\sigma_{u(\bj)}^{\bm}(\bj)}\ar[r]^{\hat{f}^{\bj}}
\ar[d]^{\cong}_{\oplus
(\partial_l^1f)^{-1}} & (\partial_l^1E)^{\bj}  \\
\bigoplus_{\bm\in\{0,2\}^{s(\bj)}} (\partial_l^0E \oplus
\partial_l^2E)^{\sigma_{u(\bj)}^{\bm}(\bj)} \ar[r]_{\cong} &
\bigoplus_{\bm\in\{0,2\}^{s(\bj)+1}} E^{\sigma_{u(s_l^1(\bj))}^{\bm}(s_l^1(\bj))
\ar[u]^{\cong}_{f^{\bj}}}, }$$ where the bottom arrow is the canonical
isomorphism. Then, we define
$$\partial_l^1(E,f):=(\partial_l^1E, \hat{f}). $$
With this definition of $\partial_l^1$, the commutation rule
\eqref{commutfaces} is satisfied.  Therefore, we have
proved the following proposition.
\begin{prop}
The morphism
 $$d=\sum_{l=1}^n\sum_{i=0,1,2}(-1)^{i+l}\partial_l^i: \Z \Sp\nolimits_n(X) \rightarrow \Z \Sp\nolimits_{n-1}(X)$$
 makes $\Z \Sp\nolimits_*(X)$ into a chain complex. Moreover, the morphism
$\Z \Sp\nolimits_*(X) \rightarrow \Z C_*(X)$ obtained by forgetting the splittings is
a chain morphism.
\end{prop}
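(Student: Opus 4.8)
The plan is to verify the two assertions separately: first that $d^2 = 0$, and then that the forgetful map to $\Z C_*(X)$ is a chain morphism. For the first part, the key structural fact is that the simplicial-type identities (\ref{commutfaces}) hold for the faces $\partial_l^i$ on $\Z \Spn_*(X)$, exactly as they do on $\Z C_*(X)$. For $i,j \in \{0,2\}$ this is immediate because $\partial_l^j(E,f) = (\partial_l^j E, \partial_l^j f)$ is defined componentwise from the cube structure, so the identities descend from those already recorded for $\Z C_*(\mmP)$ in (\ref{identities3}). The content is therefore to check the identities (\ref{commutfaces}) whenever at least one of the two indices equals $1$, i.e. that $\partial_i^1$ commutes appropriately with $\partial_r^j$ (any $j$) after the index shift. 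On underlying cubes this is automatic; what must be checked is that the splittings agree. Here I would unwind the definition of $\hat f$: it is built as the composite $f^{\bj}\circ(\text{canonical iso})\circ(\oplus(\partial_l^1 f)^{-1})$, and since $\partial_r^j$ acts on the index set $\{0,1,2\}^{n-1}$ by the same combinatorial rule $s_i^*$ / $\sigma$ that appears in the formula for $\hat f$, taking a further face of $(\partial_l^1 E, \hat f)$ just restricts all three factors to the relevant sub-collection of corners. Because the canonical isomorphisms among finite direct sums are coherent (associativity and commutativity of $\oplus$), the two orders of taking a $1$-face and another face produce the same composite of canonical maps, hence the same splitting. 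Granting (\ref{commutfaces}), the computation $d^2=0$ is the standard cancellation: in $d^2$ each term $\partial_i^l\partial_r^j$ with the pair of positions unordered is matched, via (\ref{commutfaces}), with $\partial_{r'}^j\partial_{i'}^l$ carrying the opposite sign from the factors $(-1)^{i+l}$, so everything cancels in pairs. (The "diagonal" contributions where both faces act in the same direction vanish or cancel using the relations $\partial_i^l s_i^u = \cdots$ only insofar as they reduce to the cube case, which is already known.)

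For the second assertion, note that the forgetful map $\Z\Spn_n(X)\to \Z C_n(X)$, $(E,f)\mapsto E$, by construction sends $\partial_l^j(E,f)$ to $\partial_l^j E$ for $j=0,2$ trivially, and for $j=1$ sends $\partial_l^1(E,f)=(\partial_l^1 E,\hat f)$ to $\partial_l^1 E$ — which is precisely the $1$-face of $E$ as a cube in $\Z C_*(X)$. Since the differentials on both complexes are the same alternating sum $\sum_{l,i}(-1)^{i+l}\partial_l^i$ of faces that are compatible under the forgetful map, commutation with $d$ is immediate once the faces are known to correspond. So this part requires essentially no work beyond observing that the underlying cube of $\partial_l^1(E,f)$ is $\partial_l^1 E$, which is built into the definition of $\hat f$ (the target of $\hat f$ is $(\partial_l^1 E)^{\bj}$).

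The main obstacle is the bookkeeping in the first part: showing that the composite of canonical direct-sum isomorphisms defining $\hat f$ is genuinely independent of the order in which one takes faces, and in particular that taking $\partial_i^1$ followed by $\partial_r^1$ in the two possible orders yields literally the same splitting of the $(n-2)$-cube. This is a coherence argument — the relevant diagrams of permutation-and-reassociation isomorphisms of $\bigoplus_{\bm}E^{\sigma(\cdot)}$ commute by Mac Lane coherence for symmetric monoidal categories — but it has to be spelled out carefully enough to be convincing, since the indexing sets $\{0,2\}^{s(\bj)}$ and the degeneracy operators $\sigma_{u(\bj)}^{\bm}$ change in a slightly intricate way when one passes to a face. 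In practice I would reduce to the $n=2$ (or $n=3$) case computed explicitly above, observe that all higher cases are obtained by restricting indices, and invoke coherence to conclude; this is exactly the level of detail at which the statement is asserted "Therefore, we have proved the following proposition."
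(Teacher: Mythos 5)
Your proposal is correct and takes essentially the same route as the paper: the paper proves the proposition by the single assertion preceding it, that with the given definition of $\partial_l^1$ the commutation identities \eqref{commutfaces} hold for split cubes, and leaves to the reader the coherence bookkeeping for the splittings $\hat f$ and the standard $d^2=0$ pair-cancellation that you spell out. Your remark about ``diagonal contributions'' is a harmless red herring --- after the first face the second face index ranges over $\{1,\dots,n-1\}$, so every composite $\partial_{l'}^{i'}\partial_l^i$ participates in the usual sign-cancelling pair and no separate degeneracy-type identity is needed --- but this does not affect the correctness of the argument.
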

\vist

\begin{obs} Observe that due to \eqref{partial1}, the morphism
$$\Sp: \Z C_*(X) \rightarrow \Z \Sp\nolimits_*(X)$$ is not a chain morphism.
\end{obs}

\subsection{An intermediate complex for the Adams operations}
Here we introduce the chain
complex that  serves as the target for the Adams operations defined on the chain complex of split
cubes. We then construct a morphism from this new chain complex to  the
original chain complex of cubes $\Z C_*(X)$.

Let $k\geq 1$. For every $n\geq 0$ and $i=1,\dots,k-1$, we define
\begin{eqnarray*}
G^k_1(X)_n & := & IC_{1}^{k}(C_n(X)) \\ & := & \{\textrm{acyclic cochain
complexes of length }k\textrm{ of  }n-\textrm{cubes} \}, \\
G^{i,k}_2(X)_n &:= & IC_{2}^{k-i,i}(C_{n}(X)) \\  &:=&
\{\textrm{2-iterated acyclic cochain complexes of lengths }(k-i,i) \\ && \
\textrm{ of }n-\textrm{cubes}
 \}.
\end{eqnarray*}  The differential of $\Z C_*(X)$ induces a
differential on the graded abelian groups
$$\Z G^{i,k}_2(X)_*:=\bigoplus_n \Z G^{i,k}_2(X)_n\qquad  \textrm{and}\qquad \Z G^k_1(X)_*:=\bigoplus_n
\Z G^k_1(X)_n.$$  That is, if $B\in G^{i,k}_2(X)_n$, then for every
$r,s$, $B^{r,s}$ is an $n$-cube. Define $\partial_l^i(B)$ to be
the $2$-iterated cochain complex of lengths $(k-i,i)$ of
$(n-1)$-cubes given by
$$\partial_l^i(B)^{r,s}:=\partial_l^i(B^{r,s}) \in C_{n-1}(X),\qquad\textrm{ for every }r,s.$$
Then the differential of $B$ is defined as $$d(B)=\sum_{i=1}^n
\sum_{l=0}^2 (-1)^{i+l}\partial_l^i(B).$$ If $A\in G^k_1(X)_n$,
then for every $r$, $A^{r}$ is an $n$-cube and the differential is
defined analogously.

For every $n$, the simple complex associated to a $2$-iterated cochain complex induces a morphism
$$\Phi^i:\Z G^{i,k}_2(X)_n\rightarrow \Z G^k_1(X)_n. $$  That is, for every $B\in G^{i,k}_2(X)_n$, $\Phi^i(B)$ is the
exact sequence of $n$-cubes
$$\Phi^i(B):= 0\rightarrow B^{00} \rightarrow \dots \rightarrow \bigoplus_{j_1+j_2=j}
B^{j_1,j_2} \rightarrow \dots \rightarrow B^{k-i,i} \rightarrow 0$$ with
morphisms given by
\begin{eqnarray*}
 B^{j_1,j_2} &\rightarrow &  B^{j_1+1,j_2}\oplus B^{j_1,j_2+1}
\\ b &\mapsto & d^1(b)+(-1)^{j_1}d^2(b).
 \end{eqnarray*}
One can easily check  that, for every $i=1,\dots,k-1$,
 $\Phi^i$  is a chain morphism.

We define a new chain complex by setting
$$\Z G^{k}(X)_n := \bigoplus_{i=1}^{k-1}\Z G^{i,k}_2(X)_{n-1}  \oplus \Z G^k_1(X)_n. $$
If $B_i\in G^{i,k}_2(X)_{n-1}$, for $i=1,\dots,k-1$, and $A\in G^k_1(X)_n$, the differential is given by
$$d_s(B_1,\dots,B_{k-1},A):= (-dB_1,\dots,-dB_{k-1}, \sum_{i=1}^{k-1}(-1)^{i}\Phi^i(B_i)+dA). $$
Since, for all $i$, the morphisms $\Phi^i$ are chain
morphisms, $d^2=0$ and therefore $(\Z G^k(X)_*=\bigoplus_n \Z G^k(X)_n,d_s)$ is a
chain complex.

Our purpose is to define a chain
morphism from the chain complex $\Z G^k(X)_*$ to the complex of cubes $\Z C_*(X)$. It is constructed
in two steps. First, we define a chain morphism from  $\Z G^k(X)_*$ to the complex
$$\Z C^{arb}_*(X):=\Z IC^{\cdot,2,\dots,2}_*(X).$$
This complex is the chain
complex that in degree $n$ consists of $n$-iterated cochain
complexes of length 2 in directions $2,\dots,n$ and arbitrary
finite length in direction 1. Alternatively, it can be thought of as
the complex of exact sequences of arbitrary finite length of
$(n-1)$-cubes.

Then, we   construct a morphism from $\Z C^{arb}_*(X)$ to $\Z
C_*(X)$, using the splitting of an acyclic cochain complex into short exact sequences.

Let
$$A : 0\rightarrow A^0\rightarrow \dots \rightarrow A^k \rightarrow 0\in G^k_1(X)_n$$
be an acyclic cochain complex of $n$-cubes. We define $\varphi_1(A)$ to be the
``secondary Euler characteristic class'', i.e.
$$\varphi_1(A) = \sum_{p\geq 0}(-1)^{k-p+1} (k-p) A^p \in \Z C_n(X).$$

We choose the signs of this definition in order to agree with
Grayson's definition of Adams operations for $n=0$ in
\cite{Grayson1}. Note that in loc. cit., an exact sequence is
viewed as a chain complex, while here it is viewed as a cochain
complex.

Recall that if $B_i \in G_2^{i,k}(X)_n$, then $B_i$ is a 2-iterated acyclic cochain complex
where $B_i^{j_1j_2}$ is an $n$-cube, for every $j_1,j_2$. We attach to $B_i$ a sum
of exact sequences of $n$-cubes as follows.
\begin{eqnarray*}
\varphi_2(B_i) & = & \sum_{j\geq
0}(-1)^{k-j+1}((k-i-j)B_i^{*,j}+(i-j)B_i^{j,*})
\\ &&+ \sum_{s\geq 1}(-1)^{k-s} (k-s) \sum_{j\geq 0} (B_i^{s-j,j}\rightarrow
\bigoplus_{j'\geq j} B_i^{s-j',j'}\rightarrow
\bigoplus_{j'>j}B_i^{s-j',j'}).\end{eqnarray*} Roughly speaking, the first
summand corresponds to the secondary Euler characteristic of the rows and the
columns. The second summand appears as a correction factor for the fact that
direct sums are not sums in $\Z C_n(X)$.

For every $n$, we define a morphism
\begin{eqnarray}
\Z G^k(X)_{n} & \xrightarrow{\varphi} & \Z C_n^{arb}(X) \label{morphismvarphi} \\
(B_1,\dots,B_{k-1},A) &\mapsto &
\varphi_1(A)+\sum_{i=1}^{k-1}(-1)^{i+1}\varphi_2(B_i).\nonumber
\end{eqnarray}

\begin{lema}
The morphism $\varphi$ is a chain morphism between $\Z G^k(X)_{*}$ and $\Z C_*^{arb}(X)$.
\end{lema}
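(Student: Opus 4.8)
The plan is to verify the chain-morphism relation $d\circ\varphi=\varphi\circ d_s$ directly on generators. Since $\varphi$ is $\Z$-linear, it suffices to treat two kinds of generators of $\Z G^k(X)_n$: an acyclic cochain complex $A\in G^k_1(X)_n$ sitting in the last summand (all other components zero), and a $2$-iterated acyclic cochain complex $B\in G^{i,k}_2(X)_{n-1}$ sitting in the $i$-th summand (all other components zero), for $1\leq i\leq k-1$. Throughout, $d$ denotes the differential of $\Z C^{arb}_*(X)=\Z IC^{\cdot,2,\dots,2}_*(X)$ inherited from $\Z IC_*(X)$; recall that $\Z C_*(X)$ is a subcomplex of $\Z C^{arb}_*(X)$ and that on it $d$ restricts to the cube differential. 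Using the definition of $d_s$, namely $d_s(0,\dots,0,A)=(0,\dots,0,dA)$ and $d_s(0,\dots,B,\dots,0)=(0,\dots,-dB,\dots,0,(-1)^i\Phi^i(B))$, and unwinding $\varphi=\varphi_1+\sum_{i=1}^{k-1}(-1)^{i+1}\varphi_2$, a short sign computation reduces the claim to the two identities
$$d\,\varphi_1(A)=\varphi_1(dA),\qquad\qquad d\,\varphi_2(B)=-\varphi_1(\Phi^i(B))-\varphi_2(dB),$$
where on the right-hand sides $d$ denotes the term-wise cube differential of $G^k_1(X)_*$, resp.\ of $G^{i,k}_2(X)_*$.

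The first identity is immediate: $\varphi_1(A)=\sum_{p\geq 0}(-1)^{k-p+1}(k-p)A^p$ is a $\Z$-linear combination of the $n$-cubes $A^p$ with coefficients independent of the cube structure, and it lies in $\Z C_n(X)$, where $d$ acts direction-wise on $A$ keeping the cochain index frozen; hence $d$ commutes with $\varphi_1$.

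For the second identity I would split $d=d'+d''$ on $\Z C^{arb}_n(X)$, where $d'=\sum_{l\geq 2}\sum_{j\geq 0}(-1)^{l+j}\partial_l^j$ collects the faces in the cube directions and $d''=\sum_{j\geq 0}(-1)^{1+j}\partial_1^j$ is the face in the arbitrary-length direction $1$. Because cube faces distribute over direct sums and all the cube entries of $\varphi_2(B)$ have their cube directions in the same slots --- now slots $2,\dots,n$, since a cochain direction has been prepended --- the operator $d'$ acts term-wise on $\varphi_2(B)$; the shift of the cube directions by one yields an overall sign, so that $d'\,\varphi_2(B)=-\varphi_2(dB)$, which cancels the last term of the target. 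The identity then reduces to
$$d''\,\varphi_2(B)=-\varphi_1(\Phi^i(B))=\sum_{s\geq 0}(-1)^{k-s}(k-s)\bigoplus_{j_1+j_2=s}B^{j_1,j_2}.$$
To prove this I would apply $d''$ to the three summands of $\varphi_2(B)$ in turn. On an exact sequence in direction $1$, $d''$ is minus the alternating sum of its terms, so applied to $\sum_j(-1)^{k-j+1}\big((k-i-j)B^{*,j}+(i-j)B^{j,*}\big)$ it produces the formal sum $\sum_{j_1,j_2}\big((-1)^{k-j_2+j_1}(k-i-j_2)+(-1)^{k-j_1+j_2}(i-j_1)\big)B^{j_1,j_2}$. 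Applied to the correction summand $\sum_{s\geq 1}(-1)^{k-s}(k-s)\sum_j\big(B^{s-j,j}\to\bigoplus_{j'\geq j}B^{s-j',j'}\to\bigoplus_{j'>j}B^{s-j',j'}\big)$, the differential $-\partial_1^0+\partial_1^1-\partial_1^2$ of each short exact sequence equals $-B^{s-j,j}+\bigoplus_{j'\geq j}B^{s-j',j'}-\bigoplus_{j'>j}B^{s-j',j'}$, and for fixed $s$ the sum over $j$ telescopes, the interior direct sums collapsing to the single cube $\bigoplus_{j_1+j_2=s}B^{j_1,j_2}$ and leaving $\bigoplus_{j_1+j_2=s}B^{j_1,j_2}-\sum_{j_1+j_2=s}B^{j_1,j_2}$. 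Adding the two contributions and using $j_1-j_2\equiv j_1+j_2\pmod{2}$ (so $(-1)^{k-j_2+j_1}=(-1)^{k-j_1+j_2}=(-1)^{k+s}$ on the slice $j_1+j_2=s$), the coefficient of each bare cube $B^{j_1,j_2}$ with $j_1+j_2=s\geq 1$ comes out to $(-1)^{k+s}\big((k-i-j_2)+(i-j_1)-(k-s)\big)=(-1)^{k+s}(s-j_1-j_2)=0$; thus all formal sums cancel, the term $(-1)^k k\,B^{00}$ survives from the first summand, and what remains is exactly $\sum_{s\geq 0}(-1)^{k-s}(k-s)\bigoplus_{j_1+j_2=s}B^{j_1,j_2}$, as wanted.

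The real work is this last computation, and the main obstacle is keeping the signs consistent across their several sources --- the signs $(-1)^{k-j+1}$ and $(-1)^{k-s}$ built into $\varphi_2$, the sign $(-1)^{i+1}$ in front of $\varphi_2$ in $\varphi$, the sign $(-1)^i$ in $d_s$, and the face signs $(-1)^{l+j}$ of the $\Z IC_*$-differential together with the unit shift of the cube directions coming from adjoining a cochain direction. The content of the computation is that the correction summand of $\varphi_2$ is precisely the object whose direction-$1$ differential converts each formal sum $\sum_{j_1+j_2=s}B^{j_1,j_2}$ into the direct sum $\bigoplus_{j_1+j_2=s}B^{j_1,j_2}$ appearing in $\varphi_1(\Phi^i(B))$ with the correct coefficient $(-1)^{k-s}(k-s)$; this is what forces the asymmetric coefficients $(k-i-j)$, $(i-j)$ and $(k-s)$ in its definition.
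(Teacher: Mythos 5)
Your proposal is correct and takes essentially the same route as the paper: both reduce the claim to the identities $d\varphi_1(A)=\varphi_1(dA)$ and $d\varphi_2(B)=-\varphi_2(dB)-\varphi_1(\Phi^i(B))$, and both establish the second by splitting the differential into cube faces (which commute with $\varphi_2$ up to the degree-shift sign) and the direction-$1$ face, with the latter computed via the telescoping of the correction summand and the coefficient identity $(k-i-j_2)+(i-j_1)-(k-s)=0$ on the slice $j_1+j_2=s$. The only difference is cosmetic: you cancel coefficients termwise, while the paper evaluates the two contributions as separate closed sums before adding them.
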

\begin{proof}
The lemma follows from the two equalities
\begin{eqnarray*}
d\varphi_1(A)&=& \varphi_1(dA), \\
d\varphi_2(B_i) & = & -\varphi_2(dB_i) - \varphi_1(\Phi^i(B_i)),\quad \forall
\ i.
\end{eqnarray*}
The first equality holds as a direct consequence of the definition
of $\varphi_1$. By the definition of the differential of $\Z
G_2^{i,k}(X)_*$,
$$-\varphi_2(dB_i)=\sum_{l=2}^n\sum_{j=0}^2 (-1)^{l+j} \partial_l^j
\varphi_2(B_i).$$ Therefore, it remains to see that
$$\sum_{r\geq 0}(-1)^r \partial_1^r \varphi_2(B_i)= \varphi_1(\Phi^i(B_i)).$$
In other terms, writing
\begin{eqnarray*}
(1) &:=& \sum_{s\geq 1}(-1)^{k-s} (k-s) \sum_{j\geq 0}
\sum_{r=0}^2(-1)^r\partial_1^{r}\big[B_i^{s-j,j}\rightarrow \bigoplus_{j'\geq
j} B_i^{s-j',j'}\rightarrow \bigoplus_{j'>j}B_i^{s-j',j'}\big], \\
(2) &:=& \sum_{r\geq 0}(-1)^r\partial_1^{r}\Big[\sum_{j\geq
0}(-1)^{k-j+1}((k-i-j)B_i^{*,j}+(i-j)B_i^{j,*})\Big],
\end{eqnarray*}
we want to see that
\begin{equation}\label{eq1}
(1)+(2) = \varphi_1(\Phi^i(B_i)) = \sum_{s\geq 0}(-1)
^{k-s+1}(k-s)\bigoplus_{j\geq 0}B_i^{s-j,j}.
\end{equation}
By a telescopic argument, the first term is
\begin{eqnarray*}
(1) &=&\Big[\sum_{s\geq 1}(-1)^{k-s} (k-s)\sum_{j\geq 0}B_i^{s-j,j}\Big]-\Big[
\sum_{s\geq 1}(-1)^{k-s} (k-s) \bigoplus_{j\geq 0} B_i^{s-j,j}\Big]\\
&=&\sum_{s\geq 1}(-1)^{k-s} (k-s)\sum_{j\geq 0}B_i^{s-j,j}
+\varphi_1(\Phi^i(B_i))-(-1)^{k-1}k B_i^{00} \\
&=& \sum_{s\geq 0}(-1)^{k-s} (k-s)\sum_{j\geq 0}B_i^{s-j,j}
+\varphi_1(\Phi^i(B_i)).
\end{eqnarray*}
The second term is
\begin{eqnarray*}
(2) &=& \Big[\sum_{r\geq 0}(-1)^r \sum_{j\geq 0}(-1)^{k-j+1} (k-i-j)
B_i^{r,j}\Big]\\ && +\Big[\sum_{r\geq 0}(-1)^r \sum_{j\geq 0} (-1)^{k-j+1}(i-j)B_i^{j,r}\Big] \\ &=&
\sum_{s\geq 0}(-1)^{k-s+1}\sum_{j\geq 0} (k-i-j) B_i^{s-j,j}+
\sum_{s\geq 0}(-1)^{k-s+1}\sum_{r\geq 0} (i-s+r) B_i^{s-r,r} \\
&=&\sum_{s\geq 0}(-1)^{k-s+1} (k-s)  \sum_{j\geq 0}  B_i^{s-j,j}.
\end{eqnarray*}
Adding the two expressions, \eqref{eq1} is proved.
\end{proof}

The final step is the construction of a morphism from $\Z
C^{arb}_*(X)$ to $\Z C_*(X)$. Recall that, by definition, an
element of $\Z C_m^{arb}(X)$ is a finite length exact sequence of
$(m-1)$-cubes. The idea is to break this exact sequence into short
exact sequences, obtaining a collection of short exact
sequences of $(m-1)$-cubes, hence, a collection of $m$-cubes (see
remark \ref{cubes22}).

Let
$$0 \rightarrow A^0 \xrightarrow{f^0} \cdots \xrightarrow{f^{j-1}} A^j  \xrightarrow{f^j}
\cdots \xrightarrow{f^{r-1}}
 A^r \rightarrow 0$$
 be an exact sequence of $(m-1)$-cubes i.e. an element of  $C_m^{arb}(X)$.
 Let $\mu^j(A)$ be the short exact sequence of $(m-1)$-cubes defined by
\begin{eqnarray*}
\mu^j(A) &:& 0\rightarrow \ker f^j \rightarrow A^j \rightarrow \ker
f^{j+1}\rightarrow 0,\quad j=0,\dots,r-1.
\end{eqnarray*}
It is the $m$-cube that along the first direction is given by:
$$\partial_1^0 (\mu^j(A))=\ker f^j, \quad \partial_1^1(\mu^j(A))=A^j, \quad \textrm{and}\quad
\partial_1^2(\mu^j(A))=\ker f^{j+1}.$$
We define $\mu$ by
$$ \Z C_m^{arb}(X)  \xrightarrow{\mu}  \Z C_m(X) \qquad  A  \mapsto  \sum_{j\geq 0} (-1)^{j-1}\mu^j(A).
$$

The next lemma follows from a direct computation.
\begin{lema}\label{mus}
The map $\mu$ is a chain morphism.
\end{lema}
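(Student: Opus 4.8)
The plan is to verify directly that $d\mu(A) = \mu(dA)$ for an arbitrary element $A\in \Z C^{arb}_m(X)$, namely an exact sequence $0\to A^0\xrightarrow{f^0}\cdots\xrightarrow{f^{r-1}}A^r\to 0$ of $(m-1)$-cubes. The differential on $\Z C^{arb}_m(X)$ involves the faces in directions $2,\dots,m$ together with the internal differential of the exact sequence (direction $1$), while the differential on $\Z C_m(X)$ uses all $m$ face maps $\partial_l^i$, $l=1,\dots,m$, $i=0,1,2$. I would organise the check by separating the face maps in directions $l\geq 2$ from the face maps $\partial_1^0,\partial_1^1,\partial_1^2$ in the "new" first direction created by the $\mu$-construction.

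First I would observe that the faces in directions $l\ge 2$ cause no trouble: for $l\ge 2$, $\partial_l^i$ commutes with taking kernels (kernels are computed componentwise, and $\partial_l^i$ just selects components), so $\partial_l^i\mu^j(A) = \mu^j(\partial_l^{i}A)$ after the index shift $l\mapsto l-1$ coming from the fact that $\mu^j(A)$ is an $m$-cube whose first direction is the new one. Matching signs here is the bookkeeping already encoded in \eqref{commutfaces}/\eqref{identities3}, and this part contributes $\mu$ applied to the direction-$\ge 2$ part of $dA$. The substantive part is the first direction: by the description of $\mu^j(A)$ along the first direction, $\partial_1^0\mu^j(A)=\ker f^j$, $\partial_1^1\mu^j(A)=A^j$, $\partial_1^2\mu^j(A)=\ker f^{j+1}$. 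Hence
$$\sum_{i=0,1,2}(-1)^{1+i}\partial_1^i\Big(\sum_{j\ge 0}(-1)^{j-1}\mu^j(A)\Big)
=\sum_{j\ge 0}(-1)^{j}\big(\ker f^j - A^j + \ker f^{j+1}\big)$$
as a sum of $(m-1)$-cubes, and a telescoping of the $\ker f^{j}$ and $\ker f^{j+1}$ terms (they appear with opposite signs in consecutive summands) collapses this to $-\sum_{j\ge0}(-1)^j A^j$ up to the boundary terms $\ker f^0$ and $\ker f^{r}$, which vanish by exactness (indeed $\ker f^0 = A^0\cap\ker f^0$... more precisely $f^0$ is injective so $\ker f^0=0$, and $f^r$ lands in $0$ so $\ker f^r=A^r$, etc., and one checks these endpoints cancel the residual terms).

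Next I would match this against $\mu$ applied to the first-direction part of $dA$. The differential of $A$ in $\Z C^{arb}_m$ along the internal (exact-sequence) direction is the alternating sum coming from the maps $f^j$; passing to kernels, $d_{\text{int}}A$ is again an exact sequence, and $\mu$ of it produces short exact sequences $0\to\ker(\text{incoming})\to(\text{term})\to\ker(\text{outgoing})\to 0$ whose first-direction faces are exactly the $\ker f^j$'s and $A^j$'s reassembled. The key point — and the main obstacle — is precisely this sign-and-index reconciliation: one must check that the $\mu^j$-decomposition is compatible with the internal differential, i.e. that $\mu(d_{\text{int}}A) = (\text{the } \partial_1\text{-part of } d\mu(A))$, keeping careful track of the $(-1)^{j-1}$ in the definition of $\mu$, the $(-1)^{1+i}$ from the first face-direction, and the telescoping. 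This is routine but genuinely sign-sensitive, which is no doubt why the paper says "follows from a direct computation". Assembling the two parts gives $d\mu(A)=\mu(dA)$, proving that $\mu$ is a chain morphism.
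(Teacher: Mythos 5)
Your overall strategy---separating the faces in directions $l\geq 2$ (where $\mu$ commutes with $\partial_l^i$ because kernels of morphisms of cubes are computed componentwise) from the faces $\partial_1^0,\partial_1^1,\partial_1^2$ in the new first direction created by $\mu$, and then telescoping---is the right shape for a direct verification, and the telescoping itself checks out: using $\ker f^0=0$ and $\ker f^r=A^r$ one does get
\begin{equation*}
\sum_{j=0}^{r-1}(-1)^{j}\bigl(\ker f^j-A^j+\ker f^{j+1}\bigr)=-\sum_{j=0}^{r}(-1)^jA^j.
\end{equation*}

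The gap is in the matching step, which you both describe incorrectly and leave unverified. You say ``passing to kernels, $d_{\text{int}}A$ is again an exact sequence, and $\mu$ of it produces short exact sequences $\ldots$ reassembled,'' but the first-direction part of $dA$ is the formal $\Z$-linear combination $\sum_{j=0}^{r}(-1)^{1+j}A^j$ of $(m-1)$-cubes, not an exact sequence, and $\mu$ is a group homomorphism applied termwise. A single $(m-1)$-cube $A^j$, read in $C^{arb}_{m-1}(X)$ as the length-$2$ exact sequence $A^{j,0}\to A^{j,1}\xrightarrow{g}A^{j,2}$, has
\begin{equation*}
\mu(A^j)=-\bigl(0\to A^{j,0}\to\ker g\bigr)+\bigl(\ker g\to A^{j,1}\to A^{j,2}\bigr),
\end{equation*}
which is \emph{not} the generator $A^j$ of $\Z C_{m-1}(X)$: it equals $A^j$ plus a degenerate cube only when $\ker g$ is literally $A^{j,0}$, and otherwise is a genuinely different element of the free abelian group. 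So the identity your plan needs, $-\sum_j(-1)^jA^j=-\sum_j(-1)^j\mu(A^j)$, is precisely the nontrivial claim; it neither follows from the telescoping nor holds termwise, and there is no visible cancellation across different $j$. You need to either justify this (for instance by showing the discrepancy lies in $\Z D_*(X)$ and passing to $\wZ C_*(X)$, or by restricting to cubes with canonical kernels), or acknowledge that the lemma as stated requires such a refinement. Calling this ``routine but sign-sensitive'' underestimates the problem: the signs in the telescoping are fine; the content is the identification of $\mu$ on length-$2$ complexes, and the proposal does not address it.
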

 \vist

\subsection{Ideas of the definition of the Adams operations on split
cubes}\label{adams2} Let $X$ be a scheme.
In order to enlighten the forthcoming construction, in this section we give examples and the outline of the definition
of Adams operations on split cubes. The starting point is
the use of the Koszul complex, as considered by Grayson in \cite{Grayson1}.

Let $\Z SG^k(X)_*$  be the chain complex obtained like $\Z
G^k(X)_*$ by considering split cubes. That is, considering the
groups
\begin{eqnarray*}
SG^k_1(X)_n & := & IC_{1}^{k}(\Sp\nolimits_n(X)), \\
SG^{i,k}_2(X)_n &:= & IC_{2}^{k-i,i}(\Sp\nolimits_{n}(X)).
\end{eqnarray*}
Observe that there is a natural morphism
$$\Z SG^k(X)_* \rightarrow \Z G^k(X)_*$$
obtained by forgetting the splitting.

For every $k\geq 1$, we construct a morphism,
$$\Z \Sp\nolimits_n(X) \xrightarrow{\Psi^k} \Z SG^k(X)_n,$$
which composed with $\mu\circ \varphi$,  gives a morphism
$$\Z \Sp\nolimits_n(X) \xrightarrow{\Psi^k} \Z C_n(X).$$

\begin{df} Let $E\in \Sp\nolimits_0(X)$ and $k\geq 1$. We define the element $\Psi^k(E) \in SG^k(X)_0=SG^k_1(X)_0$
to be the \emph{$k$-th Koszul complex of $E$}, i.e. the exact
sequence
$$0\rightarrow \Psi^k(E)^0 \rightarrow \dots\rightarrow \Psi^k(E)^k \rightarrow 0$$
with
$$\Psi^k(E)^p= E{\cdot}\stackrel{p}{\dots}{\cdot} E \otimes E\wedge \stackrel{k-p}{\dots}
\wedge E= S^pE \otimes \bigwedge\nolimits^{k-p}E.$$
\end{df}
Observe that, for $k=1$, we have
$$\Psi^1(E): 0 \rightarrow E \xrightarrow{=} E \rightarrow 0.$$

By definition, the Koszul complex is functorial. Moreover, it has a very good
behavior with direct sums.
\begin{lema} If $E$ and $F$ are two locally free sheaves on $X$, then there is a canonical
isomorphism  of exact sequences
\begin{equation}\label{direct}
\Psi^k(E\oplus F)\cong \bigoplus_{m=0}^k \Psi^{k-m}(E)\otimes
\Psi^m(F),\qquad \forall \ k.\end{equation}
\end{lema}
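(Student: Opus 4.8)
The plan is to establish the isomorphism \eqref{direct} by unwinding the definitions of $S^p$ and $\bigwedge^{k-p}$ on a direct sum, which is the standard decomposition of symmetric and exterior powers of a direct sum. First I would recall the two classical natural isomorphisms of locally free sheaves
\[
S^p(E\oplus F)\cong \bigoplus_{a+b=p} S^a E\otimes S^b F,\qquad
\bigwedge\nolimits^{q}(E\oplus F)\cong \bigoplus_{c+d=q}\bigwedge\nolimits^{c}E\otimes \bigwedge\nolimits^{d}F,
\]
both of which hold Zariski-locally from the corresponding statements for free modules and patch canonically, hence are isomorphisms of sheaves. Tensoring these two decompositions and using that $\Psi^k(E\oplus F)^p=S^p(E\oplus F)\otimes \bigwedge^{k-p}(E\oplus F)$, one gets
\[
\Psi^k(E\oplus F)^p\cong \bigoplus_{\substack{a+b=p\\ c+d=k-p}} \big(S^aE\otimes S^bF\big)\otimes\big(\textstyle\bigwedge^{c}E\otimes \bigwedge^{d}F\big)
\cong \bigoplus_{m=0}^{k}\ \bigoplus_{\substack{a+c=k-m\\ b+d=m\\ a+b=p}} \Psi^{k-m}(E)^{a}\otimes \Psi^{m}(F)^{b},
\]
after reindexing by $m=b+d$ and observing that the inner index $b$ then runs over exactly the degree-$p$ part of the tensor product complex $\Psi^{k-m}(E)\otimes\Psi^m(F)$. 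This identifies $\Psi^k(E\oplus F)^p$ with the degree-$p$ term of $\bigoplus_{m=0}^k \Psi^{k-m}(E)\otimes\Psi^m(F)$ (the tensor product taken as a complex, with its simple-complex grading as in the Example on tensor products), so the two graded objects agree.

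Next I would check that this termwise isomorphism is compatible with the differentials, so that it is an isomorphism of exact sequences (equivalently, of cochain complexes) and not merely of graded sheaves. The differential of the Koszul complex $\Psi^k(E)$ is, up to sign, the composite of the comultiplication $\bigwedge^{k-p}E\to E\otimes\bigwedge^{k-p-1}E$ with multiplication $S^pE\otimes E\to S^{p+1}E$; both of these are natural and, under the decomposition of $S^\bullet$ and $\bigwedge^\bullet$ of a direct sum, restrict to the corresponding operations on the summands together with the Leibniz-type cross terms. These cross terms are precisely what produces the tensor-product differential $d_1\otimes \mathrm{id}\pm \mathrm{id}\otimes d_2$ on $\Psi^{k-m}(E)\otimes\Psi^m(F)$, with the sign $(-1)^{j_1}$ coming from moving the first Koszul differential past the symmetric factor of the second complex. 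This sign bookkeeping is exactly the convention already fixed in the Definition of the simple complex and in the Example on tensor products, so it matches by construction.

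The main obstacle is the sign and reindexing verification in the previous step: one must confirm that the canonical decomposition isomorphisms for $S^\bullet$ and $\bigwedge^\bullet$ intertwine the Koszul differential with the simple-complex differential of the tensor product with the correct signs, rather than with some twisted sign. I expect this to reduce to a purely combinatorial check on free modules of rank one (generators of $E$ and $F$), where the Koszul differential acts by the usual "insert a wedge generator, remove a symmetric generator" rule; naturality and Zariski-local triviality of all the sheaves involved then upgrade the free-module statement to the claimed canonical isomorphism of exact sequences of locally free sheaves on $X$. Functoriality of all constructions in $(E,F)$ is immediate from the functoriality of $S^p$, $\bigwedge^q$, $\otimes$ and of the decomposition isomorphisms, which is what makes the isomorphism \eqref{direct} canonical.
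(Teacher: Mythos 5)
The paper gives no written proof for this lemma (it is followed immediately by the end-of-proof box), treating it as a standard property of the Koszul complex. Your argument is exactly the standard one an expert would supply: reduce to the classical decompositions of $S^p$ and $\bigwedge^q$ of a direct sum, reindex by $m$, and check that the Koszul differentials go over to the simple-complex differential of the tensor product, with the $(-1)^{j_1}$ sign matching the convention fixed in the paper's definition of the simple complex and the tensor-product example. Your indexing is consistent (with $c=k-m-a$, $d=m-b$ one gets $c+d=k-p$, so the graded pieces match), and your remark that the sign verification reduces to a rank-one local computation plus naturality is the right way to close the gap. So the proposal is correct and is essentially the proof the paper silently assumes.
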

\vist This identification plays a key role in the construction of
the Adams operations.

The definition of $\Psi^k(E)$ of a general split $n$-cube $E$
is given by a combinatorial formula on the Adams operations
$\Psi^k(E^{\bj})$, $\bj\in \{0,1,2\}^n$, of the locally free
sheaves in the cube. In order to understand how the combinatorial
formula of the upcoming definition \ref{adams1} arises,  we explain here
the low degree cases. We give the detailed construction of the
Adams operations for $n=1$, with $k=2,3$, and for $n=2$, $k=2$. We
extract from these examples the key facts that enable us to set
the general formula.

\medskip
{\bf Adams operations in the case $\mathbf{n=1}$, $\mathbf{k=2}$.}
Let $n=1$ and $k=2$, and let
 $E=[E^0,E^2]$. Recall that this notation means that $E$ is the $1$-cube $E^0\rightarrow E^0\oplus E^2 \rightarrow E^2$.
 Our aim is to define $\Psi^2(E)$ in such a way that its
 differential is exactly $$-\Psi^2(E^0)+\Psi^2(E^0\oplus E^2)-\Psi^2(E^2).$$
Consider the two exact sequences
\begin{eqnarray*}
C_0(E)&:=&[\Psi^2(E^0), \Psi^1(E^0)\otimes \Psi^1(E^2)]\in G_1^2(X)_1,\\
C_1(E)&:=&[\Psi^2(E^0)\oplus \Psi^1(E^0)\otimes \Psi^1(E^2),  \Psi^2(E^2)]\in
G_1^2(X)_1.
\end{eqnarray*}
Then,
\begin{eqnarray*}
d(C_0(E)+C_1(E))&=& -\Psi^2(E^0)-\Psi^1(E^0)\otimes \Psi^1(E^2) \\ &&
+\Psi^2(E^0)\oplus \Psi^1(E^0)\otimes \Psi^1(E^2)\oplus\Psi^2(E^2)-\Psi^2(E^2).
\end{eqnarray*}
Observe now that by the isomorphism \eqref{direct},
$$\Psi^2(E^0)\oplus \Psi^1(E^0)\otimes \Psi^1(E^2)\oplus\Psi^2(E^2) \cong \Psi^2(E^0\oplus E^2). $$
 We define then $\widetilde{C}_1(E)$ to be the exact
sequence $C_1(E)$ modified by means of  this isomorphism, that is
$$\widetilde{C}_1(E):\ \Psi^2(E^0)\oplus \Psi^1(E^0)\otimes \Psi^1(E^2) \rightarrow \Psi^2(E^0\oplus E^2)
\rightarrow  \Psi^2(E^2).$$

Finally, observe that the extra term $\Psi^1(E^0)\otimes
\Psi^1(E^2)$ is the simple complex associated to the $2$-iterated
complex of length $(1,1)$
$$ \xymatrix{
E^0\otimes E^0 \ar[r] \ar[d] & \ar[d]E^0\otimes E^2 \\
 E^2\otimes E^0 \ar[r] & E^2\otimes E^2
 } $$
Hence, viewed as a $2$-iterated complex, $\Psi^1(E^0)\otimes \Psi^1(E^2)\in G^1_{2,2}(X)_0$.
We conclude that the differential of
$$\Psi^2(E):=(\Psi^1(E^0)\otimes
\Psi^1(E^2), C_0(E)+\widetilde{C}_1(E))\in \Z G_2^{1,2}(X)_0 \oplus \Z G_1^2(X)_1
$$ is exactly $-\Psi^2(E^0)+\Psi^2(E^0\oplus E^2)-\Psi^2(E^2)$ as desired.

For an arbitrary split $1$-cube $(E,f)$, we define:
\begin{enumerate*}[$\blacktriangleright$]
\item $ \widetilde{C}_0(E,f):= C_0(\Sp(E)) \in G_1^2(X)_1. $
\item $\widetilde{C}_1(E,f)$ is the exact sequence  obtained changing, via the given
splitting $f:E^1\cong E^0\oplus E^2$, the terms $\Psi^2(E^0\oplus E^2)$ in $\widetilde{C}_1(\Sp(E))$ by
$\Psi^2(E^1)$:
$$\xymatrix{  & \Psi^2(E^0\oplus E^2) \ar[dr]\ar[d]^{\cong} & \\
  \Psi^2(E^0)\oplus \Psi^1(E^0)\otimes \Psi^1(E^2)  \ar[r] \ar[ur] &  \Psi^2(E^1) \ar[r] &
  \Psi^2(E^2).}
$$
\end{enumerate*}
We define then
$$\Psi^2(E,f):=(\Psi^1(E^0)\otimes
\Psi^1(E^2), \widetilde{C}_0(E,f)+\widetilde{C}_1(E,f))\in \Z G_2^{1,2}(X)_0 \oplus \Z G_1^2(X)_1.
$$

\textbf{  Adams operations in the case $\mathbf{n=1}$, $\mathbf{k=3}$.}
Let $E=[E^0,E^2]$ as above. Our aim now is to define $\Psi^3(E)$
in such a way that its differential is
$$-\Psi^3(E^0)+\Psi^3(E^0\oplus E^2)-\Psi^3(E^2).$$
We consider the exact sequences,
\begin{eqnarray*}
C_0(E)&:=&[\Psi^3(E^0), \Psi^2(E^0)\otimes \Psi^1(E^2)],\\
C_1(E)&:=&[\Psi^3(E^0)\oplus \Psi^2(E^0)\otimes \Psi^1(E^2),
  \Psi^1(E^0)\otimes \Psi^2(E^2)], \\
  C_2(E)&:=&[\Psi^3(E^0)\oplus \Psi^2(E^0)\otimes \Psi^1(E^2)\oplus
  \Psi^1(E^0)\otimes \Psi^2(E^2),\Psi^3(E^2)].
  \end{eqnarray*}
Then, we define $\widetilde{C}_2(E)$ to be the exact sequence obtained from $C_2(E)$ by
exchanging $$\Psi^3(E^0)\oplus
\Psi^2(E^0)\otimes \Psi^1(E^2)\oplus
\Psi^1(E^0)\otimes \Psi^2(E^2)\oplus\Psi^3(E^2)$$ with $\Psi^3(E^0\oplus
E^2)$ by the isomorphism  \eqref{direct}. That is, $\widetilde{C}_2(E)$ is the exact sequence
$$ \Psi^3(E^0)\oplus \Psi^2(E^0)\otimes \Psi^1(E^2)\oplus
  \Psi^1(E^0)\otimes \Psi^2(E^2) \rightarrow \Psi^3(E^0\oplus
E^2) \rightarrow \Psi^3(E^2).$$
Then, the differential of $C_0(E)+C_1(E)-\widetilde{C}_2(E)$ is
$$-\Psi^3(E^0) +\Psi^3(E^0\oplus
E^2) - \Psi^3(E^2) - \Psi^2(E^0)\otimes \Psi^1(E^2)
 - \Psi^1(E^0)\otimes \Psi^2(E^2). $$
As in the previous example, we have $ \Psi^2(E^0)\otimes
\Psi^1(E^2)\in \Z G_2^{1,3}(X)_0$ and $\Psi^1(E^0)\otimes
\Psi^2(E^2) \in \Z G_2^{2,3}(X)_0$. Hence, the differential of
$$\Psi^3(E):=( \Psi^2(E^0)\otimes \Psi^1(E^2),\Psi^1(E^0)\otimes \Psi^2(E^2),
 C_0(E)+C_1(E)+\widetilde{C}_2(E))$$
 is exactly $-\Psi^3(E^0)+\Psi^3(E^0\oplus E^2)-\Psi^3(E^2)$ as desired.

Finally, for an arbitrary split $1$-cube $(E,f)$,
$$\widetilde{C}_0(E,f):=C_0(\Sp(E)),\qquad
\widetilde{C}_1(E,f):=C_1(\Sp(E)),$$ and $\widetilde{C}_2(E,f)$ is
defined by changing the term $\Psi^3(E^0\oplus E^2)$ in $
\widetilde{C}_2(\Sp(E))$ by $\Psi^3(E^1)$ by means of the
isomorphism induced by $f$.

\textbf{Adams operations in the case $\mathbf{n=2}$, $\mathbf{k=2}$.}
Let $E=\left[\begin{array}{cc} E^{00} & E^{02} \\
E^{20} & E^{22} \end{array} \right]$. Then, we define the
following terms of $\Z G_1^{2}(X)_2$: {\small $$
\setlength\arraycolsep{2pt}\begin{array}{rcl}
 C_{00}(E) &:=& \left[  \begin{array}{cc} \Psi^2(E^{00})\  &\  \Psi^1(E^{00})\otimes
  \Psi^1(E^{02}) \vspace{0.2cm}
\\ \Psi^1(E^{00})\otimes \Psi^1(E^{20}) \
 &\ \Psi^1(E^{00})\otimes \Psi^1(E^{22})\oplus \Psi^1(E^{02})  \otimes \Psi^1(E^{20})
  \end{array} \right], \\ \\
    C_{10}(E) &:=& \left[\begin{array}{cc} \begin{array}{c}\Psi^2(E^{00})\\ \oplus \Psi^1(E^{00})\otimes
   \Psi^1(E^{20}) \end{array} \  & \begin{array}{c}\Psi^1(E^{00})\otimes \Psi^1(E^{02})\oplus \Psi^1(E^{00})\otimes
   \Psi^1(E^{22}) \\ \oplus
  \Psi^1(E^{02})\otimes\Psi^1(E^{20}) \end{array} \    \\ \\ \ \Psi^2(E^{20})
  &\  \Psi^1(E^{20})\otimes \Psi^1(E^{22})\end{array}
   \right], \\ \\
     C_{01}(E)&:=& \left[\begin{array}{cc} \Psi^2(E^{00})\oplus \Psi^1(E^{00})\otimes
   \Psi^1(E^{02}) &
\Psi^2(E^{02}) \vspace{0.2cm} \\
 \begin{array}{c} \Psi^1(E^{00})\otimes
\Psi^1(E^{20})  \oplus \Psi^1(E^{00})\otimes \Psi^1(E^{22})\\
\oplus \Psi^1(E^{02})\otimes\Psi^1(E^{20})
\end{array}\ & \Psi^1(E^{02})\otimes \Psi^1(E^{22})\end{array} \right],\\ \\
C_{11}(E) &:=&  \left[\begin{array}{cc}
\begin{array}{c}\Psi^2(E^{00})\oplus \\
 \Psi^1(E^{00})\otimes \Psi^1(E^{02})\oplus
 \Psi^1(E^{00})\otimes \Psi^1(E^{20}) \\ \oplus  \Psi^1(E^{00})\otimes
\Psi^1(E^{22}) \oplus
\Psi^1(E^{02})\otimes\Psi^1(E^{20})\end{array}\  & \begin{array}{c} \Psi^2(E^{02})
\\ \oplus \Psi^1(E^{02})\otimes
\Psi^1(E^{22}) \end{array}\vspace{0.2cm} \\
\Psi^2(E^{20})  \oplus \Psi^1(E^{20})\otimes \Psi^1(E^{22})  &
\Psi^2(E^{22})
\end{array}\right].\end{array}$$ }

The faces of each of these cubes are as follows (up to the isomorphism \eqref{direct}):
\begin{enumerate*}[$\rhd$]
\item Terms that are summands of $\Psi^2(\partial_i^jE)$:
$$
\begin{array}{rclcrcl }
\partial_1^0C_{00}(E)& = & C_0(\partial_1^0E), & \qquad & \partial_2^0C_{00}(E)& =& C_0(\partial_2^0E), \\
\partial_1^0C_{01}(E)& =& C_1(\partial_1^0E), &  \qquad &\partial_2^0C_{10}(E)& = & C_1(\partial_2^0E), \\
\partial_1^1 C_{10}(E)& = & C_0(\partial_1^1E), &  \qquad &\partial_2^1C_{01}(E)& =& C_0(\partial_2^1E), \\
\partial_1^1 C_{11}(E)& = & C_1(\partial_1^1E), &  \qquad &\partial_2^1C_{11}(E)& =& C_1(\partial_1^1E), \\
\partial_1^2 C_{10}(E)& = & C_0(\partial_1^2E), & \qquad & \partial_2^2C_{01}(E)& =& C_0(\partial_2^2E), \\
\partial_1^2 C_{11}(E)& = & C_1(\partial_1^2E), & \qquad & \partial_2^2C_{11}(E)& =& C_1(\partial_2^2E).
\end{array}$$
\item Terms that are a direct sum of a tensor product of complexes: $$\partial_1^2C_{00}(E), \quad \partial_2^2C_{00}(E),
\quad \partial_2^2C_{10}(E),\quad \partial_1^2C_{01}(E).$$
\item Terms that cancel each other:
$$
\begin{array}{rclcrcl}
\partial_1^1C_{00}(E)& = & \partial_1^0 C_{10}(E), & \qquad & \partial_2^1C_{00}(E)& = & \partial_2^0 C_{01}(E), \\
\partial_2^1C_{10}(E)& = & \partial_2^0 C_{11}(E), & \qquad & \partial_1^1C_{01}(E)& = & \partial_1^0 C_{11}(E).
\end{array}$$
\end{enumerate*}
It follows that the  differential of
$C_{00}(E)+C_{10}(E)+C_{01}(E)+C_{11}(E)$ is $\Psi^2(dE)$ plus
some terms which are  a direct sum of a tensor product of complexes.
These tensor product complexes can be viewed as $2$-iterated
cochain complexes of lengths $(1,1)$ of exact sequences. These
terms are added in $\Z G_{2}^{1,2}(X)_1$.

Finally, for every split $2$-cube $(E,f)$, $\Psi^2(E,f)$ is defined by
modifying the appropriate locally free sheaves in each $C_i(E)$ by
means of the splitting $f$.

\medskip
\textbf{Outline of the definition of $\Psi^k$.}
The given examples suggest that the general procedure can be as follows:
\begin{enumerate*}[$\rhd$]
\item First, for every split $n$-cube $(E,f)$, the direct sum $n$-cubes $C_i(E)$ are defined by a purely
combinatorial formula on the Adams operations of the locally free
sheaves $E^{\bj}$, $\bj\in
\{0,2\}^n$. \item The previous construction is modified
by the isomorphism \eqref{direct}. \item The entries of $C_i(E)$ which give the terms
$C_i(\partial_l^1E)$ in the differential, are modified by the
morphisms induced by the splitting $f$.
\end{enumerate*}

From the examples, the key ideas that lead to the general
combinatorial formula of $C_i(E)$ can also be extracted:
\begin{enumerate*}[$\rhd$]
\item At each step,
some entries in $\partial_r^0$ are constructed by taking the direct sum
$\partial_r^0\oplus
\partial_r^2$ in a previous cube (where ``previous'' refers to the order $\leq$ for the subindices in $C_*(E)$).
\item The new entries (not being direct sums of previous cubes) are direct sums of
summands of the form $\Psi^{k_1}(E^{2\bn_1})\otimes \cdots \otimes
\Psi^{k_r}(E^{2\bn_r})$ satisfying:
\begin{enumerate}[a)]
\item $\sum_s k_s= k$. \item In the position $2\bj$  of the cube
$C_{\bi}(E)$, $\sum_s k_s\bn_s=\bj+\bi$. \item Observe that in the
example $n=2$, all the entries in $C_{00}$ are new, and the new
entries for $C_{10}$ are in the positions $(2,0),(2,2)$ and for
$C_{11}$ in $(2,2)$. Hence the new entries will correspond to the
multi-indexes $\bj$ such that $\bj\geq \nu(\bi)$ (recall that
$\nu(\bi)$ is the characteristic of the multi-index $\bi$).
\end{enumerate}

\end{enumerate*}

\subsection{Definition of the cubes $C_i(E)$}

Let $(E,f)\in \Sp\nolimits_n(X)$ and fix $k\geq 1$. For every $\bi
\in [0,k-1]^n$, we define an exact sequence of direct sum cubes
$C_{\bi}(E)\in \Z SG^k_1(X)_n$. This definition is purely
combinatorial and does not depend on the splitting $f$.

Let
$$L_k^r=\{\bk=(k_1,\dots,k_r)\ |\  |\bk|=k\textrm{ and }k_s\geq 1, \forall
s\}$$ be the set of partitions of length $r$ of $k$.  Then, for
every integer $n\geq 0$ and every multi-index $\bm$ of length $n$,
we define a new set of indices by:

$$\Lambda_k^n(\bm)=\bigcup_{r\geq 1}
\Big\{(\bk,\bn^1,\dots,\bn^r)\in L_k^r\times (\{0,1\}^n)^r\ \Big|
\sum k_s \bn^s = \bm, \ \bn^1\prec\dots\prec\bn^r\Big\}.$$

\begin{df}\label{adams1}
Let $(E,f)\in \Sp_n(X)$. For every $\bi \in [0,k-1]^n$, let
$C_{\bi}(E)\in SG^k_1(X)_n$ be the exact sequence of direct sum $n$-cubes, such that, for every $\bj\in \{0,1\}^n$, the position
$2\bj$ is given as follows:
\begin{enumerate}[(i)]
\item If $\bj \geq \nu(\bi)$, then
\begin{equation}\label{c1}
C_{\bi}(E)^{2\bj}= \bigoplus_{\Lambda_k^n(\bj+\bi)}
\Psi^{k_1}(E^{2\bn^1})\otimes \cdots \otimes
\Psi^{k_r}(E^{2\bn^r}).\end{equation} \item If $\bj \ngeq \nu(\bi)$, then
\begin{equation}\label{c2} C_{\bi}(E)^{2\bj}= \bigoplus_{\bj \leq \bm \leq \nu(\bi)\cup \bj}
C_{\bi-\nu(\bi)\cdot \bj^c}(E)^{2\bm}.\end{equation}
\end{enumerate}
\end{df}
In order to simplify the notation, we will denote by $r$ the
length of $\bk\in \Lambda_k^n(\bj+\bi)$ in the future occurrences
of the sum \eqref{c1}. Observe that the definition of $C_i(E)$ for a split cube
$(E,f)$ does not depend on $f$.

\begin{obs}
First of all, observe that in equation \eqref{c2},
$\bi-\nu(\bi)\cdot \bj^c\in [0,k-1]^n$, i.e. for every $s$, $0\leq
(\bi-\nu(\bi)\cdot \bj^c)_s$:
\begin{enumerate*}[$\rhd$]
\item If $i_s=0$, then $\nu(\bi)_s=0$ and hence
$(\bi-\nu(\bi)\cdot \bj^c)_s=0$. \item If $i_s>0$, then
$\nu(\bi)_s=1$ and since $(\bj^c)_s=0,1$, we have $i_s\geq
\nu(\bi)_s{\cdot}(\bj^c)_s$.
\end{enumerate*}
\end{obs}

\begin{obs}\label{inductioni} Observe that equations \eqref{c1} and \eqref{c2} define
$C_{\bi}(E)^{2\bj}$ for all $\bj\in \{0,1\}^n$. This follows from
the following facts:
\begin{enumerate*}[$\rhd$]
\item  Since $\nu(\bj) \geq (0,\dots,0)$, equation \eqref{c1}
defines $C_{\bi}(E)$ for $\vline\ \bi\ \vline = 0$. \item If $\bj
\ngeq \nu(\bi)$, then
$$\vline\ \bi-\nu(\bi)\cdot \bj^c\ \vline <\ \vline\ \bi\ \vline\
.$$ Indeed, an equality  would imply that $\nu(\bi)\cdot \bj^c=0$
and hence that for all $r$ such that $i_r \neq 0$, $\bj^c_r=0$,
concluding that $\bj \geq \nu(\bi)$.
\end{enumerate*}
\end{obs}

\begin{obs} Observe that equation \eqref{c2} also holds trivially for $\bj \geq \nu(\bi)$,
because in this case $\nu(\bi)\cdot \bj^c=0$ and $\nu(\bi)\cup \bj=\bj$.  We
will use this observation in some proofs when only combinatorial questions are
involved.
\end{obs}

\begin{obs}
The direct sum of more than two terms means the consecutive direct
sums of two objects under the lexicographic order in the
subindices. In order to prove some equalities, it will be
necessary to reorder the indices, and then return to the original
order. For the sake of simplicity, we will not write the
canonical isomorphisms used at every step and will just write
equalities. The reader should bear this remark in mind throughout
this section.
\end{obs}

\subsection{Faces of the cubes $C_{\bi}(E)$}

In this section, we compute  the faces of the cubes $C_{\bi}(E)$.
We fix $k\geq 1$, $\bi \in [0,k-1]^n$, a split $n$-cube $(E,f)\in
\Sp_n(X)$ and $l\in \{1,\dots,n\}$.

\begin{lema}\label{partial0}
$$\partial_l^0 C_{\bi}(E) = \left\{\begin{array}{ll}
 \partial_l^1 C_{\bi-1_l}(E) & \textrm{if }i_l\neq 0, \\[0.2cm]
 C_{\partial_l(\bi)}(\partial_l^0E) & \textrm{if }i_l= 0.
 \end{array}\right.$$
\end{lema}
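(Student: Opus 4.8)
The plan is to argue by induction on the norm $|\bi|$, reducing the assertion to a combinatorial identity among the entries of the cubes at the ``corners''. Every term occurring in the formulas \eqref{c1}--\eqref{c2} for $C_{\bi}(E)$ is a direct sum cube (with entries length-$k$ complexes of sheaves), and $\partial_l^0$, $\partial_l^1$ carry direct sum cubes to direct sum cubes with all structural morphisms canonical; hence it suffices to check that the two sides of the claimed equality have the same object at each corner $2\bj'$, $\bj'\in\{0,1\}^{n-1}$, the identification being through the canonical isomorphisms of direct sums. Set $\bj^{(0)}=s_l^0(\bj')$ and $\bj^{(1)}=s_l^1(\bj')$. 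By the definition of the faces of a cube, $(\partial_l^0C_{\bi}(E))^{2\bj'}=C_{\bi}(E)^{2\bj^{(0)}}$, whereas $s_l^1(2\bj')$ is a position of $C_{\bi-1_l}(E)$ with $l$-th entry $1$ and all other entries in $\{0,2\}$, so, $C_{\bi-1_l}(E)$ being a direct sum cube, $(\partial_l^1C_{\bi-1_l}(E))^{2\bj'}=C_{\bi-1_l}(E)^{2\bj^{(0)}}\oplus C_{\bi-1_l}(E)^{2\bj^{(1)}}$. We shall use throughout that $\nu(\cdot)$, $\cup$ and the coordinatewise product commute with $\partial_l$, and the resulting identity $\partial_l\big(\bi-\nu(\bi)\cdot\bj^c\big)=\partial_l(\bi)-\nu(\partial_l(\bi))\cdot(\partial_l\bj)^c$.

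Suppose $i_l=0$. Then $\nu(\bi)_l=0=\bj^{(0)}_l$, so $\bj^{(0)}\geq\nu(\bi)$ exactly when $\bj'\geq\nu(\partial_l(\bi))$. If so, both $C_{\bi}(E)^{2\bj^{(0)}}$ and $C_{\partial_l(\bi)}(\partial_l^0E)^{2\bj'}$ are given by \eqref{c1}, and $(\bk,\bn^1,\dots,\bn^r)\mapsto(\bk,s_l^0(\bn^1),\dots,s_l^0(\bn^r))$ is a bijection $\Lambda_k^{n-1}(\bj'+\partial_l(\bi))\to\Lambda_k^n(\bj^{(0)}+\bi)$: inserting a $0$ in the $l$-th slot is an order embedding for $\preceq$, and the map is surjective because the $l$-th coordinate of $\bj^{(0)}+\bi$ vanishes, forcing $(\bn^s)_l=0$ for all $s$; since $E^{2s_l^0(\bn^s)}=(\partial_l^0E)^{2\bn^s}$, the two direct sums agree term by term. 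If instead $\bj'\ngeq\nu(\partial_l(\bi))$, then also $\bj^{(0)}\ngeq\nu(\bi)$, \eqref{c2} applies on both sides, and because the $l$-th coordinates of $\bj^{(0)}$ and of $\nu(\bi)$ vanish, every index $\bm$ in \eqref{c2} has $m_l=0$; the identity then reduces, term by term over the corners $\bm'=\partial_l\bm$, to $C_{\boldsymbol{p}}(E)^{2s_l^0(\bm')}=C_{\partial_l(\boldsymbol{p})}(\partial_l^0E)^{2\bm'}$ where $\boldsymbol{p}:=\bi-\nu(\bi)\cdot(\bj^{(0)})^c$ has $\boldsymbol{p}_l=0$ and, by Remark~\ref{inductioni}, $|\boldsymbol{p}|<|\bi|$ --- this is the inductive hypothesis. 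The base case $|\bi|=0$ is covered by the first alternative.

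Suppose now $i_l\neq0$. Then $\bj^{(0)}_l=0<1=\nu(\bi)_l$, so \eqref{c2} gives $C_{\bi}(E)^{2\bj^{(0)}}=\bigoplus_{\bj^{(0)}\leq\bm\leq\nu(\bi)\cup\bj^{(0)}}C_{\boldsymbol{p}}(E)^{2\bm}$ with $\boldsymbol{p}:=\bi-\nu(\bi)\cdot(\bj^{(0)})^c$, $|\boldsymbol{p}|<|\bi|$; here $m_l$ runs over $\{0,1\}$ and $\bm':=\partial_l\bm$ runs over $R:=\{\bm'\ |\ \bj'\leq\bm'\leq\nu(\partial_l(\bi))\cup\bj'\}$. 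If $\bj'\geq\nu(\partial_l(\bi))$, then $\nu(\bi)\cdot(\bj^{(0)})^c=1_l$, so $\boldsymbol{p}=\bi-1_l$ and $R=\{\bj'\}$, and splitting off the $m_l=0$ and $m_l=1$ terms gives exactly $C_{\bi-1_l}(E)^{2\bj^{(0)}}\oplus C_{\bi-1_l}(E)^{2\bj^{(1)}}$. If $\bj'\ngeq\nu(\partial_l(\bi))$, one also expands $C_{\bi-1_l}(E)^{2\bj^{(0)}}$ and $C_{\bi-1_l}(E)^{2\bj^{(1)}}$ by \eqref{c2}, with indices $\boldsymbol{q}:=(\bi-1_l)-\nu(\bi-1_l)\cdot(\bj^{(0)})^c$ and $\boldsymbol{q}':=(\bi-1_l)-\nu(\bi-1_l)\cdot(\bj^{(1)})^c$; the commutation rules give $\boldsymbol{q}'=\boldsymbol{p}$, and $\boldsymbol{q}=\boldsymbol{p}$ if $i_l=1$ while $\boldsymbol{q}=\boldsymbol{p}-1_l$ if $i_l\geq2$. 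Comparing the two expansions term by term over $\bm'\in R$ and re-associating the direct sums, all terms cancel when $i_l=1$, while for $i_l\geq2$ the equality reduces, for each $\bm'$, to $C_{\boldsymbol{p}}(E)^{2s_l^0(\bm')}=C_{\boldsymbol{p}-1_l}(E)^{2s_l^0(\bm')}\oplus C_{\boldsymbol{p}-1_l}(E)^{2s_l^1(\bm')}$, which is the present statement (case $i_l\neq0$) at the corner $\bm'$ for the index $\boldsymbol{p}$, valid by the inductive hypothesis since $|\boldsymbol{p}|<|\bi|$. The genuinely laborious part is this last matching of the nested direct-sum decompositions of $C_{\bi}(E)^{2\bj^{(0)}}$ and of $C_{\bi-1_l}(E)^{2\bj^{(0)}}\oplus C_{\bi-1_l}(E)^{2\bj^{(1)}}$, together with the sorting of the sub-cases and the bookkeeping of the canonical reordering isomorphisms implicit in all the ``equalities'' above; everything else is a direct unwinding of \eqref{c1}--\eqref{c2}.
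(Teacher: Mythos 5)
Your proof is correct and follows the same strategy as the paper's: reduce the assertion to a corner-by-corner comparison via the defining formulas \eqref{c1}--\eqref{c2}, handle the $\bj\geq\nu(\bi)$ case by a bijection of the index set $\Lambda_k^n$, and resolve the remaining cases by a descent argument. The one genuine difference is the organisation of the induction. You run a single global induction on the norm $|\bi|$ covering both cases of the lemma at once, whereas the paper, in the $i_l\neq 0$ case, first splits the sum \eqref{c2} over $m_l\in\{0,1\}$, identifies the $m_l=1$ piece directly with $C_{\bi-1_l}(E)^{2s_l^1(\bj)}$ by a coordinate calculation, and then runs a separate nested induction on the single coordinate $i_l$ to handle the $m_l=0$ piece; for $i_l=0$ the paper unrolls \eqref{c2} completely down to the corners and invokes the $\bj\geq\nu(\bi)$ subcase termwise rather than appealing to an inductive hypothesis. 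Your single $|\bi|$-induction is arguably cleaner and makes the descent explicit in both branches; the paper's presentation makes the identification of the $m_l=1$ summand with $\partial_l^1 C_{\bi-1_l}$ more visible without appeal to induction. The substantive coordinate identities you use (that $\nu$, $\cup$, products and complements commute with $\partial_l$; the computation $\nu(\bi)\cdot(\bj^{(0)})^c=1_l$ when $\bj'\geq\nu(\partial_l\bi)$; the bijection $\Lambda_k^{n-1}(\bj'+\partial_l\bi)\to\Lambda_k^n(\bj^{(0)}+\bi)$; and the identifications $\boldsymbol{q}'=\boldsymbol{p}$, $\boldsymbol{q}=\boldsymbol{p}$ or $\boldsymbol{p}-1_l$) all check out, and you flag the canonical-reordering caveat just as the paper does.
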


\begin{proof}
Assume that $i_l\neq 0$. It is enough to see that
$$\partial_l^0 C_{\bi}(E)^{2\bj}= \partial_l^1C_{\bi-1_l}(E)^{2\bj},\qquad \forall \ \bj\in \{0,1\}^{n-1}.$$
Observe that
\begin{eqnarray*}
\partial_l^0 C_{\bi}(E)^{2\bj}&=&
C_{\bi}(E)^{2s_l^0(\bj)}=\bigoplus_{s_l^0(\bj) \leq \bm \leq \nu(\bi)\cup
s_l^0(\bj)}
C_{\bi-\nu(\bi)\cdot s_l^0(\bj)^c}(E)^{2\bm}, \\
\partial_l^1C_{\bi-1_l}(E)^{2\bj}&=& C_{\bi-1_l}(E)^{2s_l^0(\bj)}\oplus
C_{\bi-1_l}(E)^{2s_l^1(\bj)},
\end{eqnarray*}
with
\begin{eqnarray}
C_{\bi-1_l}(E)^{2s_l^0(\bj)} &=& \bigoplus_{s_l^0(\bj) \leq \bm \leq
\nu(\bi-1_l)\cup s_l^0(\bj)}
C_{\bi-1_l-\nu(\bi-1_l)\cdot s_l^0(\bj)^c}(E)^{2\bm}, \qquad \label{p1}\\
C_{\bi-1_l}(E)^{2s_l^1(\bj)}  &=& \bigoplus_{s_l^1(\bj) \leq \bm \leq
\nu(\bi-1_l)\cup s_l^1(\bj)} C_{\bi-1_l-\nu(\bi-1_l)\cdot
s_l^1(\bj)^c}(E)^{2\bm}. \label{p2}
\end{eqnarray}
Let us compute each term  separately. We start with \eqref{p2}.
Since $s_l^1(\bj)_l=1$, we see that $\bm_l=1$ for all indices
$\bm$ of the direct sum. Moreover, since $s_l^1(\bj)^c_l=0$,
$s_l^0(\bj)^c_l=1$, and $\nu(\bi)_l=1$, we obtain that
$$(\bi-1_l-\nu(\bi-1_l)\cdot s_l^1(\bj)^c)_l=i_l-1_l=(\bi-\nu(\bi)\cdot s_l^0(\bj)^c)_l.$$
Since it is clear that for all $t\neq l$, $(\bi-\nu(\bi)\cdot
s_l^0(\bj)^c)_t =(\bi-1_l-\nu(\bi-1_l)\cdot s_l^1(\bj)^c)_t$, we
see that $$\bi-\nu(\bi)\cdot s_l^0(\bj)^c =
\bi-1_l-\nu(\bi-1_l)\cdot s_l^1(\bj)^c.$$ Thus,
$$C_{\bi-1_l}(E)^{2s_l^1(\bj)}= \bigoplus_{\substack{s_l^0(\bj) \leq \bm \leq \nu(\bi)\cup
s_l^0(\bj), \\ \bm_l=1}} C_{\bi-\nu(\bi)\cdot s_l^0(\bj)^c}(E)^{2\bm}. $$

All that remains is to see that
$$C_{\bi-1_l}(E)^{2s_l^0(\bj)} = \bigoplus_{\substack{s_l^0(\bj) \leq \bm \leq \nu(\bi)\cup
s_l^0(\bj), \\ \bm_l=0}} C_{\bi-\nu(\bi)\cdot s_l^0(\bj)^c}(E)^{2\bm}.$$

We proceed by induction on $i_l$. If $i_l=1$ then  $(\bi-1_l)_l=0$
and hence $(\nu(\bi-1_l)\cup s_l^0(\bj))_l=0$ which means that
$m_l=0$ for all multi-indices $\bm$ in the direct sum \eqref{p1}.
Moreover $(\bi-1_l-\nu(\bi-1_l)\cdot s_l^0(\bj)^c)_l=0$ and
$(\bi-\nu(\bi)\cdot s_l^0(\bj)^c)_l=0$. Therefore,
$$\bi-1_l-\nu(\bi-1_l)\cdot s_l^0(\bj)^c = \bi-\nu(\bi)\cdot s_l^0(\bj)^c$$
and the equality is proven. Let $i_l>1$ and assume that the lemma
is true for $i_l-1$. Then, since $i_l>1$, we have  $\nu(\bi-1_l)=\nu(\bi)$
and $\nu(\bi)_l=1$. Writing $\alpha=\bi-1_l-\nu(\bi-1_l)\cdot
s_l^0(\bj)^c$, we obtain
\begin{align*} C_{\bi-1_l}(E)^{2s_l^0(\bj)} &=
\bigoplus_{\substack{s_l^0(\bj) \leq \bm \leq \nu(\bi-1_l)\cup s_l^0(\bj) \\
m_l=0}} C_{\alpha}(E)^{2\bm} \oplus
\bigoplus_{\substack{s_l^0(\bj) \leq \bm \leq \nu(\bi-1_l)\cup
s_l^0(\bj)\\ m_l=1}}
C_{\alpha}(E)^{2\bm} \\
&=\bigoplus_{\bj \leq \bn \leq \nu(\partial_l(\bi))\cup \bj }
(\partial_l^0 \oplus \partial_l^1)C_{\alpha}(E)^{2\bn}\\ & =
\bigoplus_{\bj \leq \bn \leq \nu(\partial_l(\bi))\cup \bj }
\partial_l^0
C_{\bi-\nu(\bi)\cdot s_l^0(\bj)^c}(E)^{2\bn} \\
&=
\bigoplus_{\substack{s_l^0(\bj) \leq \bm \leq \nu(\bi)\cup s_l^0(\bj), \\
m_l=0}} C_{\bi-\nu(\bi)\cdot s_l^0(\bj)^c}(E)^{2\bm},
\end{align*}
since $(\bi-\nu(\bi)\cdot s_l^0(\bj)^c)_l=i_l-1$ and we can apply
the induction hypothesis in the third equality.

Let us now prove the equality with $i_l=0$. Assume that
$s_l^0(\bj)\geq \nu(\bi)$. Then,
$$\partial_l^0 C_{\bi}(E)^{2\bj}= C_{\bi}(E)^{2s_l^0(\bj)} = \bigoplus_{
\Lambda_k^n(s_l^0(\bj)+\bi)} \Psi^{k_1}(E^{2\bn^1})\otimes \cdots \otimes
\Psi^{k_r}(E^{2\bn^r}) = (*).$$ Since $(s_l^0(\bj)+\bi)_l=0$, $(\sum k_s
\bn^s)_l=0$. Hence, for all $s$, $\bn^s_l=0$ and we obtain
$$(*)=\bigoplus_{
\Lambda_k^{n-1}(\bj+\partial_l(\bi))} \Psi^{k_1}(\partial_l^0E^{2\bn^1})\otimes
\cdots \otimes
\Psi^{k_r}(\partial_l^0E^{2\bn^r})=C_{\partial_l(\bi)}(\partial_l^0E)^{2\bj}.
$$

Finally, if $s_l^0(\bj)\ngeq \nu(\bi)$, the direct sum \eqref{c2}
can be written in the form
$$\partial_l^0 C_{\bi}(E)^{2\bj}=\bigoplus_{\bm,\bn}
C_{\bn}(E)^{2\bm}$$ with $\bm\geq \nu(\bn)$ and $m_l=0$.  We
deduce that $n_l=0$ and hence,
$$\partial_l^0 C_{\bi}(E)^{2\bj}=\bigoplus_{\bm,\bn} C_{\bn}(E)^{2\bm}= \bigoplus_{\bm,\bn}
C_{\partial_l(\bn)}(\partial_l^0E)^{2\bm}=C_{\partial_l(\bi)}(\partial_l^0E)^{2\bj}.
$$ This reduces the proof to the already considered case.
\end{proof}

\begin{lema}\label{partial11} If $\bi_l=k-1$, then \
$\partial_l^2 C_{\bi}(E) = C_{\partial_l(\bi)}(\partial_l^2E). $
\end{lema}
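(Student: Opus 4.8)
The plan is to verify the asserted equality of two direct sum $(n-1)$-cubes corner by corner. A direct sum cube is determined by its corner data via the canonical inclusions and projections; $\partial_l^2$ of a direct sum cube is again the direct sum cube formed on the restricted corners (exactly as $\partial_l^2\Sp(E)=\Sp(\partial_l^2E)$); and $\partial_l^2$ acts only in the cube directions, hence respects the tensor-product-of-Koszul-complexes structure of the corners. So it suffices to prove
$$\partial_l^2 C_{\bi}(E)^{2\bj}=C_{\partial_l(\bi)}(\partial_l^2E)^{2\bj}\qquad\text{for every }\bj\in\{0,1\}^{n-1},$$
and I would do this by induction on $|\bi|$, always under the hypothesis $i_l=k-1$. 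Since $\partial_l^2 C_{\bi}(E)^{2\bj}=C_{\bi}(E)^{2s_l^1(\bj)}$ and $s_l^1(\bj)_l=1\geq\nu(\bi)_l$, we have $s_l^1(\bj)\geq\nu(\bi)$ if and only if $\bj\geq\nu(\partial_l(\bi))$, and the two cases of Definition \ref{adams1} split along this dichotomy, much as in the proof of Lemma \ref{partial0}.

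If $\bj\geq\nu(\partial_l(\bi))$, then $C_{\bi}(E)^{2s_l^1(\bj)}$ is given by \eqref{c1} with $\bm=s_l^1(\bj)+\bi$. The key point is that $m_l=1+(k-1)=k=\sum_s k_s$, which forces $\bn^s_l=1$ for every $s$ in each summand $(\bk,\bn^1,\dots,\bn^r)\in\Lambda_k^n(\bm)$; deleting the $l$-th coordinate then gives a bijection $\Lambda_k^n(s_l^1(\bj)+\bi)\xrightarrow{\sim}\Lambda_k^{n-1}(\bj+\partial_l(\bi))$ (the partition $\bk$ and the strict lexicographic chain $\bn^1\prec\dots\prec\bn^r$ are unaffected, since all the $\bn^s$ agree in coordinate $l$), while $E^{2\bn^s}=\partial_l^2E^{2\partial_l(\bn^s)}$ because $\bn^s_l=1$. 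Substituting turns the right-hand side of \eqref{c1} into $C_{\partial_l(\bi)}(\partial_l^2E)^{2\bj}$, again by \eqref{c1} (valid as $\bj\geq\nu(\partial_l(\bi))$). This case is a direct computation; in particular it settles the base case $|\bi|=0$, where $\nu(\bi)=\mathbf{0}$ and the hypothesis holds automatically.

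If $\bj\ngeq\nu(\partial_l(\bi))$, then \eqref{c2} gives $C_{\bi}(E)^{2s_l^1(\bj)}=\bigoplus_{s_l^1(\bj)\leq\bm\leq\nu(\bi)\cup s_l^1(\bj)}C_{\bi'}(E)^{2\bm}$ with $\bi'=\bi-\nu(\bi)\cdot s_l^1(\bj)^c$. Here $\bi'_l=i_l=k-1$ (since $s_l^1(\bj)^c_l=0$), and $|\bi'|<|\bi|$, since $\nu(\bi)\cdot s_l^1(\bj)^c=0$ would force $i_t\neq0\Rightarrow s_l^1(\bj)_t=1$ for all $t$, i.e. $\bj\geq\nu(\partial_l(\bi))$. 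Every $\bm$ in the sum has $\bm_l=1$, so $\bm=s_l^1(\partial_l(\bm))$, the index $\partial_l(\bm)$ runs over $\bj\leq\bn\leq\nu(\partial_l(\bi))\cup\bj$, and $C_{\bi'}(E)^{2\bm}=\partial_l^2C_{\bi'}(E)^{2\partial_l(\bm)}$; the induction hypothesis applies to $\bi'$ and, using $\partial_l(\bi')=\partial_l(\bi)-\nu(\partial_l(\bi))\cdot\bj^c$, rewrites the sum as $\bigoplus_{\bj\leq\bn\leq\nu(\partial_l(\bi))\cup\bj}C_{\partial_l(\bi)-\nu(\partial_l(\bi))\cdot\bj^c}(\partial_l^2E)^{2\bn}=C_{\partial_l(\bi)}(\partial_l^2E)^{2\bj}$ by \eqref{c2}, completing the induction. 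The only step needing real attention is the bijection $\Lambda_k^n(s_l^1(\bj)+\bi)\cong\Lambda_k^{n-1}(\bj+\partial_l(\bi))$ and the bookkeeping of surviving summands in \eqref{c2}; I expect no obstacle beyond keeping the multi-index manipulations straight, the hypothesis $i_l=k-1$ being exactly what makes coordinate $l$ ``saturated'' so that it strips off cleanly.
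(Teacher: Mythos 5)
Your proof is correct and takes essentially the same route as the paper: the key observation in both cases is that $(s_l^1(\bj)+\bi)_l=k$ forces $\bn^s_l=1$ for every $s$, and the case $\bj\ngeq\nu(\partial_l(\bi))$ is reduced to the first case by peeling off \eqref{c2}. The only difference is that you make the reduction explicit as an induction on $|\bi|$ (and spell out the bijection $\Lambda_k^n(s_l^1(\bj)+\bi)\cong\Lambda_k^{n-1}(\bj+\partial_l(\bi))$ and the fact that $\bi'_l=k-1$ is preserved), whereas the paper simply says ``arguing as in the proof of the previous lemma'' and treats only the case $s_l^1(\bj)\geq\nu(\bi)$.
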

\begin{proof}
Arguing as in the proof of the previous lemma, we limit ourselves to
proving the equality in the case where $s_l^1(\bj)\geq \nu(\bi)$. In
this situation, we obtain
$$(\partial_l^2 C_{\bi}(E))^{2\bj}= C_{\bi}(E)^{2s_l^1(\bj)} = \bigoplus_{
\Lambda_k^n(s_l^1(\bj)+\bi)} \Psi^{k_1}(E^{2\bn^1})\otimes \cdots
\otimes \Psi^{k_r}(E^{2\bn^r}).$$ Since
$(s_l^1(\bj)+\bi)_l=1+k-1=k$, we deduce that for all $s$,
$\bn^s_l=1$ and therefore the lemma is proved.
\end{proof}

The next lemma determines the faces $\partial_l^2$ of $C_{\bi}(E)$
whenever $i_l\neq k-1$.
\begin{lema}\label{tens}
Let $\bj\in \{0,1\}^{n}$ with $j_l=1$ and let $i_l\neq k-1$. Up to
a canonical isomorphism, each of the direct summands of
$C_{\bi}(E)^{2\bj}$ is the tensor product of an exact sequence of
length $(k-i_l-1)$ by an exact sequence of length $(i_l+1)$.
Explicitly, in the equality
$$C_{\bi}(E)^{2\bj}=\bigoplus_{\Lambda_k^n(\bj+\bi)}
 \Psi^{k_1}(E^{2\bn^1})\otimes \cdots \otimes
\Psi^{k_r}(E^{2\bn^r}),$$ the tensor product of the Koszul
complexes corresponding to the multi-indices $\bn$ with $n_l=0$
gives the exact sequence of length $k-i_l-1$, while  the tensor
product of the Koszul complexes corresponding to the multi-indices
$\bn$ with $n_l=1$ gives the exact sequence of length $i_l+1$.
\end{lema}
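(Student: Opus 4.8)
The plan is to reduce to the case where $C_{\bi}(E)^{2\bj}$ is given directly by the explicit direct sum \eqref{c1}, and then to read off the structure of a single summand $\Psi^{k_1}(E^{2\bn^1})\otimes\cdots\otimes\Psi^{k_r}(E^{2\bn^r})$ purely from its $l$-th coordinates. For the reduction, if $\bj\ngeq\nu(\bi)$ then by \eqref{c2} the group $C_{\bi}(E)^{2\bj}$ is a direct sum of terms $C_{\bi'}(E)^{2\bm}$ with $\bi'=\bi-\nu(\bi)\cdot\bj^c$ and $\bj\leq\bm\leq\nu(\bi)\cup\bj$; since $j_l=1$ we have $(\bj^c)_l=0$, hence $i'_l=i_l\neq k-1$, and every $\bm$ occurring satisfies $m_l\geq j_l=1$, so $m_l=1$. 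By Remark \ref{inductioni}, $|\bi'|<|\bi|$, so an induction on $|\bi|$ reduces the statement to the case $\bj\geq\nu(\bi)$, the value of $i_l$ being unchanged throughout.

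So assume $\bj\geq\nu(\bi)$, and fix a summand of \eqref{c1} indexed by $(\bk,\bn^1,\dots,\bn^r)\in\Lambda_k^n(\bj+\bi)$, so that $\sum_s k_s\bn^s=\bj+\bi$, $\sum_s k_s=k$, each $k_s\geq 1$, and $\bn^1\prec\cdots\prec\bn^r$. The key point is to split $\{1,\dots,r\}=S_0\sqcup S_1$ according to whether $(\bn^s)_l$ equals $0$ or $1$: reading the $l$-th coordinate of $\sum_s k_s\bn^s=\bj+\bi$ yields $\sum_{s\in S_1}k_s=j_l+i_l=i_l+1$, whence $\sum_{s\in S_0}k_s=k-i_l-1$. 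Because $i_l\in[0,k-1]$ and $i_l\neq k-1$, both numbers are $\geq 1$, so $S_0$ and $S_1$ are nonempty — this is exactly where the hypothesis $i_l\neq k-1$ is used. Using commutativity and associativity of the tensor product (without, as agreed, writing out the canonical isomorphisms), the summand is then identified with
\[
\left(\bigotimes_{s\in S_0}\Psi^{k_s}(E^{2\bn^s})\right)\otimes\left(\bigotimes_{s\in S_1}\Psi^{k_s}(E^{2\bn^s})\right).
\]

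To conclude, I would use that each $\Psi^{k_s}(E^{2\bn^s})$ is a $k_s$-th Koszul complex of a locally free sheaf, hence an acyclic cochain complex concentrated in degrees $[0,k_s]$. Since tensoring with a locally free sheaf is exact, a tensor product of bounded complexes of locally free sheaves, one of whose factors is acyclic, is again acyclic; therefore $\bigotimes_{s\in S_0}\Psi^{k_s}(E^{2\bn^s})$ is exact, and its associated simple complex lies in degrees $[0,\sum_{s\in S_0}k_s]=[0,k-i_l-1]$, i.e. it is an exact sequence of length $k-i_l-1$; similarly $\bigotimes_{s\in S_1}\Psi^{k_s}(E^{2\bn^s})$ is an exact sequence of length $i_l+1$. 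This is exactly the assertion, the Koszul complexes of the $\bn^s$ with $n^s_l=0$ forming the first tensor factor and those with $n^s_l=1$ the second. I expect no real obstacle here; the only points requiring care are the reduction to $\bj\geq\nu(\bi)$ and the bookkeeping of multi-indices in the coordinate computation.
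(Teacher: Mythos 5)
Your proof is correct and follows essentially the same route as the paper's: reduce by induction on $|\bi|$ (via \eqref{c2}) to the case $\bj\geq\nu(\bi)$, then read the $l$-th coordinate of $\sum_s k_s\bn^s = \bj+\bi$ to split the factors into those with $n^s_l=0$ and those with $n^s_l=1$, with lengths $k-i_l-1$ and $i_l+1$ respectively. The only cosmetic difference is that the paper first proves separately (by contradiction) that each summand is a genuine tensor product and then performs the length computation, whereas you obtain both at once by observing that the hypothesis $i_l\neq k-1$ forces both $S_0$ and $S_1$ to be nonempty; your added remark on acyclicity of the tensor factors is a detail the paper leaves implicit.
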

\begin{proof} If
$\bj\geq \nu(\bi)$, then,
$$C_{\bi}(E)^{2\bj}=\bigoplus_{\Lambda_k^n(\bj+\bi)}
 \Psi^{k_1}(E^{2\bn^1})\otimes \cdots \otimes
\Psi^{k_r}(E^{2\bn^r}).$$ Assume that one of the summands is not a
tensor product. Then there   exists a multi-index $\bn$ with $k\bn
= \bj + \bi$. In particular, $(k\cdot\bn)_l = 1+\bi_l$. But
$(k\cdot \bn)_l$ is either $0$ or $k$, and by hypothesis, $1\leq
1+\bi_l < 1+k-1 = k$, which is a contradiction. If $\bj\ngeq
\nu(\bi)$, then,
$$C_{\bi}(E)^{2\bj}=\bigoplus_{\bj \leq \bm \leq \nu(\bi)\cup \bj}
C_{\bi-\nu(\bi)\cdot \bj^c}(E)^{2\bm}.$$ The condition $\bj\leq
\bm$ implies that $\bm_l=1$. Moreover, $(\bi-\nu(\bi)\cdot
\bj^c)_l=i_l-\bj^c_l \leq i_l < k-1$. This means that every direct
summand is a tensor product of exact sequences. By induction on
$|\bi|$, this is true for any multi-index $\bi$.

Let us prove that every direct summand can be seen as the tensor
product of an exact sequence of length $k-i_l-1$, corresponding to
the multi-indices $\bn$ with $n_l=0$, and one of length $i_l+1$,
corresponding to the multi-indices with $n_l=1$. By an induction
argument, it is enough to prove the result in the case $\bj\geq
\nu(\bi)$. Let $(\bk,\bn^1,\dots,\bn^r)\in \Lambda_k^n(\bj+\bi)$.
Let $s_1,\dots,s_m$ be the indices such that $\bn^{s_j}_{l}=1$ and
let $s'_1,\dots,s'_{r-m}$ be the indices such that
$\bn^{s'_j}_{l}=0$. Since $\sum k_s \bn^s_l=i_l+1$, we see that
$\sum_{j=1}^m k_{s_j}=i_l+1$ and hence
 $$T_1:=\Psi^{k_{s_1}}(E^{2\bn^{s_1}})\otimes \cdots \otimes
\Psi^{k_{s_m}}(E^{2\bn^{s_m}})$$ is an exact sequence of length
$i_l+1$.  Then,
 $$T_0:=\Psi^{k_{s'_1}}(E^{2\bn^{s'_1}})\otimes \cdots \otimes
\Psi^{k_{s'_{r-m}}}(E^{2\bn^{s'_{r-m}}})$$ is an exact sequence of
length $k-i_l-1$ and there is a canonical isomorphism
$$ \Psi^{k_1}(E^{2\bn^1})\otimes \cdots
\otimes \Psi^{k_r}(E^{2\bn^r}) \cong T_0\otimes T_1 $$ as desired.
\end{proof}

\begin{lema}\label{sum}
If $i_l= k-1$, then
$$\partial_l^1
C_{\bi}(E) \cong C_{\partial_l(\bi)}(\partial_l^0E\oplus \partial_l^2E),
$$ with the isomorphism induced by the canonical isomorphism of
the Koszul complex of a direct sum in \eqref{direct}.
\end{lema}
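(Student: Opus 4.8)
The plan is to reduce the computation, via two elementary facts, to faces already determined in Lemmas \ref{partial0}--\ref{tens}. First, for a direct sum cube $D$ — and each $C_{\bi}(E)$ is, in every cochain degree, a direct sum cube, the splitting being irrelevant by Definition \ref{adams1} — and each direction $l$, one has $\partial_l^1 D\cong\partial_l^0 D\oplus\partial_l^2 D$ up to the canonical isomorphisms of direct sums: all three are direct sum cubes, and the $2\bj$-corner of $\partial_l^1 D$ is $D$ evaluated at the index having a single $1$, placed in position $l$, which by the definition of a direct sum cube splits as $(\partial_l^0 D)^{2\bj}\oplus(\partial_l^2 D)^{2\bj}$. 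Thus $\partial_l^1 C_{\bi}(E)\cong\partial_l^0 C_{\bi}(E)\oplus\partial_l^2 C_{\bi}(E)$. Second, Lemma \ref{partial0} rewrites $\partial_l^0 C_{\sigma_l^m(\bi)}(E)$ as $\partial_l^1 C_{\sigma_l^{m-1}(\bi)}(E)$ for $1\le m\le k-1$ and as $C_{\partial_l(\bi)}(\partial_l^0E)$ for $m=0$; inserting $\partial_l^1=\partial_l^0\oplus\partial_l^2$ repeatedly and recalling that $i_l=k-1$, this telescopes to
\[
\partial_l^1 C_{\bi}(E)\;\cong\;C_{\partial_l(\bi)}(\partial_l^0E)\;\oplus\;\bigoplus_{m=0}^{k-1}\partial_l^2 C_{\sigma_l^m(\bi)}(E),
\]
so everything reduces to identifying the right-hand side with $C_{\partial_l(\bi)}(\partial_l^0E\oplus\partial_l^2E)$.

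I would do this by expanding $C_{\partial_l(\bi)}(\partial_l^0E\oplus\partial_l^2E)$ fully through repeated use of the canonical splitting $\Psi^{K}(A\oplus B)\cong\bigoplus_{a+b=K}\Psi^{a}(A)\otimes\Psi^{b}(B)$ of \eqref{direct}, turning each of its corners into a direct sum of tensor products of Koszul complexes, each factor being a Koszul complex of a sheaf of $\partial_l^0E$ or of a sheaf of $\partial_l^2E$, and grading this sum by $u$, the total degree of the factors coming from $\partial_l^2E$ (so $u$ runs over $0,\dots,k$). The piece $u=0$ is exactly $C_{\partial_l(\bi)}(\partial_l^0E)$, and I claim the piece $u=m+1$, for $0\le m\le k-1$, is canonically $\partial_l^2 C_{\sigma_l^m(\bi)}(E)$; granting this, the identification is complete. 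For $m=k-1$ the claim is Lemma \ref{partial11}: the piece $u=k$ is the sub-sum in which every factor comes from $\partial_l^2E$, namely $C_{\partial_l(\bi)}(\partial_l^2E)$. For $m<k-1$ it is Lemma \ref{tens} recast: the $2\bj$-corner of $\partial_l^2 C_{\sigma_l^m(\bi)}(E)$ is the corner of $C_{\sigma_l^m(\bi)}(E)$ indexed, in the notation of \eqref{c1}--\eqref{c2}, by $s_l^1(\bj)$, whose $l$-th entry is $1$ while the $l$-th entry of the subscript $\sigma_l^m(\bi)$ is $m\neq k-1$, so Lemma \ref{tens} factors each summand there as a tensor product of an exact sequence of length $m+1$ built from the Koszul complexes indexed by the $\bn$ with $n_l=1$ — i.e. from sheaves of $\partial_l^2E$ — with one of length $k-m-1$ built from those with $n_l=0$, i.e. from sheaves of $\partial_l^0E$; since the $l$-th coordinate of $s_l^1(\bj)+\sigma_l^m(\bi)$ equals $m+1$, the $\partial_l^2E$-part has total degree $m+1$, which places the summand in the graded piece $u=m+1$.

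The crux, and the step I expect to be most laborious, is to make this placement precise: to produce an explicit bijection between the indexing set (a subset of $\Lambda_k^n$) underlying $\partial_l^2 C_{\sigma_l^m(\bi)}(E)$ and the indexing set of the $u=m+1$ terms in the expansion of $C_{\partial_l(\bi)}(\partial_l^0E\oplus\partial_l^2E)$ — a partition vector in $\Lambda_k^n$ with prescribed $1$'s and $0$'s in position $l$ corresponds, on the other side, to the partition obtained by deleting position $l$ together with the bookkeeping of which factors were flagged as $\partial_l^2E$ — and to check that this bijection persists when the case $\bj\not\ge\nu(\partial_l(\bi))$ is unwound via \eqref{c2}, which forces an induction on $|\partial_l(\bi)|$ running parallel to the one in the proof of Lemma \ref{partial0}. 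The irritating technical point is that deleting or re-inserting the $l$-th coordinate need not respect the lexicographic order on multi-indices, so the two sides are matched only after reordering direct summands through canonical isomorphisms; as is done throughout this section, these reorderings are left implicit.
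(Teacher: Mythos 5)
Your plan is correct and follows essentially the same route as the paper. The paper also begins from the decomposition $\partial_l^1 C_{\bi}(E)\cong\partial_l^0 C_{\bi}(E)\oplus\partial_l^2 C_{\bi}(E)$ (which it phrases as ``applying lemmas \ref{partial11} and \ref{partial0} recursively''), telescopes via Lemma \ref{partial0} to the identical expression $C_{\partial_l(\bi)}(\partial_l^0E)\oplus\bigoplus_{a\in[0,k-1]}\partial_l^2 C_{\bi-a_l}(E)$ (your $m$ and the paper's $a$ are related by $m=k-1-a$, and $\bi-a_l=\sigma_l^m(\bi)$ since $i_l=k-1$), expands the other side via \eqref{direct}, and then writes out in both directions the explicit bijection of indexing sets that you flag as the crux; the paper's grading quantity $a=k-\sum m_s$ is your $u$ up to the complementary change of variable.
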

\begin{proof}
The first part of the lemma is lemma \ref{partial0}. Assume then
that $i_l=k-1$. Applying lemmas \ref{partial11} and \ref{partial0}
recursively, we obtain
\begin{equation}\label{c6}
\partial_l^1 C_{\bi}(E)\cong \partial_l^0C_{\bi}(E)\oplus \partial_l^2C_{\bi}(E)=C_{\partial_l(\bi)}(\partial_l^0E)\oplus
 \bigoplus_{a\in [0,k-1]}
\partial_l^2 C_{\bi-a_l}(E). \end{equation}
Then, if $\bj\in \{0,1\}^{n-1}$ satisfies $\bj\geq
\partial_l\nu(\bi)$, we obtain that
$$\begin{array}{l} \partial_l^1
C_{\bi}(E)^{2\bj}=\bigoplus_{\Lambda_k^{n-1}(\bj+\partial_l(\bi))}
\Psi^{k_1}(\partial_l^0E^{2\bn^1})\otimes \cdots \otimes
\Psi^{k_r}(\partial_l^0E^{2\bn^r}) \oplus \vspace{0.3cm}  \\
\hspace{3cm} \bigoplus_{a\in [0,k-1]}\
\bigoplus_{\Lambda_k^{n}(s_l^1(\bj)+\bi-a_l)}
\Psi^{k_1}(E^{2\bn^1})\otimes \cdots \otimes
\Psi^{k_r}(E^{2\bn^r}).
\end{array}$$

On the other hand, by the additivity of $\Psi^k$ in \eqref{direct}, there are
canonical isomorphisms
\begin{eqnarray*}\Psi^{k_1}((\partial_l^0E\oplus \partial_l^2E)^{2\bn^1})
\otimes \cdots \otimes \Psi^{k_r}((\partial_l^0E\oplus
\partial_l^2E)^{2\bn^r})=\phantom{\otimes \cdots \otimes
\Psi^{k_r-m_r}((\partial_l^0E)^{\bn^r})}
\\ \phantom{\Psi^{k_1}(\partial_l^0E\oplus
\partial_l^1E)_{\bn^1} }\cong  \bigoplus_{m_1=0}^{k_1}\cdots\bigoplus_{m_r=0}^{k_r}
\begin{array}{r}
\\ \Psi^{k_1-m_1}(\partial_l^0E^{2\bn^1})\otimes \cdots \otimes
\Psi^{k_r-m_r}(\partial_l^0E^{2\bn^r}) \vspace{0.2cm}\\  \otimes
\Psi^{m_1}(\partial_l^2E^{2\bn^1})\otimes \cdots \otimes
\Psi^{m_r}(\partial_l^2E^{2\bn^r}).\end{array}\end{eqnarray*} Therefore,
$C_{\partial_l(\bi)}(\partial_l^0E\oplus
\partial_l^2E)^{2\bj}$ is canonically isomorphic to
$$ \bigoplus_{\Lambda_k^{n-1}(\bj+\partial_l(\bi))}
\bigoplus_{m_1=0}^{k_1}\cdots\bigoplus_{m_r=0}^{k_r}\  \begin{array}{r}
\\ \Psi^{k_1-m_1}(E^{2s_l^0(\bn^1)})\otimes \cdots \otimes
\Psi^{k_r-m_r}(E^{2s_l^0(\bn^r)}) \vspace{0.2cm}\\  \otimes
\Psi^{m_1}(E^{2s_l^1(\bn^1)})\otimes \cdots \otimes
\Psi^{m_r}(E^{2s_l^1(\bn^r)}).\end{array}
$$

The first summand in \eqref{c6} corresponds to the indices
$m_1,\dots,m_r=0$ in the latter sum. Therefore, we have to see
that the second summand in \eqref{c6} corresponds to the summand
in the latter sum with at least one index $m_i\neq 0$. We will see
that there is a bijection between the sets of multi-indices of
each term.

For every collection
$k_1,\dots,k_r,\bn^1,\dots,\bn^r,m_1,\dots,m_r$ with not all
$m_s=0$, let $a=k-\sum m_s$. Since $\sum m_s\neq 0$, $a\in
[0,k-1]$. Let $s_1,\dots,s_t\in \{1,\dots,r\}$ be the indices for
which $k_{s_l}-m_{s_l}\neq 0$ and let $s'_1,\dots,s'_m\in
\{1,\dots,r\}$ be the indices for which $m_{s'_l}\neq 0$. Then, to
these data correspond the indices $a$, and
\begin{eqnarray*}
\{k'_1,\dots,k_{t+m}'\}&=&
\{k_{s_1}-m_{s_1},\dots,k_{s_t}-m_{s_t},m_{s'_1},\dots,m_{s'_m}\} \\
\hat{\bn}^p& = & \left\{ \begin{array}{ll} s_l^0(\bn^{s_p}) & \textrm{ if
}p=1,\dots,t, \\ s_l^1(\bn^{s_{p-t}})& \textrm{ if }p=t+1,\dots,t+m.
\end{array}\right. \end{eqnarray*}

Conversely, let $a,k_s$ and $\bn^s$ be given. Then, we rearrange
the collection $\bn^s$ by the rule:
$$\bn^1,\dots,\bn^y,\bn^{y+1},\dots,\bn^{x},\bn^{x+1},\dots,\bn^{2x-y},\bn^{2x-y+1},\dots,
\bn^{r}$$ with
$$(\bn^s)_l=\left\{ \begin{array}{ll} 0 & \textrm{if }s=1,\dots,x, \\ 1 &
\textrm{if }s=x+1,\dots,r,
  \end{array}\right. $$
The index $y$ satisfies that for $s=1,\dots,y$ and for $s=2x-y+1,\dots,r$, there is no other
index $s'$ with $\partial_l(\bn^s)=\partial_l(\bn^{s'})$
and for $s=y+1,\dots,x-y$, $\partial_l(\bn^s)=\partial_l(\bn^{s+x-y})$.
Then, the corresponding multi-indices are
\begin{eqnarray*}
(\hat{\bn}^1,\dots,\hat{\bn}^{r-x-y})&=&
(\partial_l(\bn^1),\dots,\partial_l(\bn^x),\partial_l(\bn^{2x-y+1}),\dots,
\partial_l(\bn^{r})),
\\
k'_s&=&\left\{\begin{array}{ll} k_s & \textrm{if }s=1,\dots,y, \\
k_s+k_{s+x-y}&
\textrm{if }s=y+1,\dots,x, \\
k_{s+x-y} & \textrm{if }s=x+1,\dots,r-x-y.
\end{array} \right. \\
m'_s&=&\left\{\begin{array}{ll} 0 & \textrm{if }s=1,\dots,y, \\
k_{s+x-y}&
\textrm{if }s=y+1,\dots,x, \\
k_{s+x-y} & \textrm{if }s=x+1,\dots,r-x-y.
\end{array} \right.
\end{eqnarray*}
The lemma follows from this correspondence.
\end{proof}

\subsection{Definition of the cubes $\wC_{\bi}(E)$}

At this point, we have defined the cubes $C_{\bi}(E)$ for every
split $n$-cube $(E,f)$. Roughly speaking, all that remains is
to change, by means of $f$, the terms corresponding to
$\partial_l^0E\oplus
\partial_l^2E$ by the terms in $\partial_l^1E$.

By the collection of lemmas above, this will be the case whenever
$i_l=k-1$ and $j_l=1$. Therefore, let $\bj\in \{0,1,2\}^n$ and
$\bi\in [0,k-1]^n$. Let
\begin{enumerate*}[$\rhd$]
\item $w(\bj)=(w_1,\dots,w_{r_w(\bj)})$ where $w_1<\dots
<w_{r_w(\bj)}$ are the indices such that $j_{w_m}=1$ and
$i_{w_m}=k-1$. \item $v(\bj)=(v_1,\dots,v_{r_v(\bj)})$ where
$v_1<\dots <v_{r_{v}(\bj)}$ are the indices such that $j_{v_m}=1$
and $i_{v_m}\neq k-1$.
\end{enumerate*}
Then, by lemma \ref{sum},
$$C_{\bi}(E)^{\bj} \cong \bigoplus_{\bm \in \{0,2\}^{r_v(\bj)}} C_{\partial_{w(\bj)}(\bi)}
\Big(\bigoplus_{\bn\in \{0,2\}^{r_w(\bj)}}
E^{\sigma_{w(\bj)}^{\bn}}\Big)^{\partial_{w(\bj)}(\sigma_{v(\bj)}^{\bm}(\bj))}.
$$
Recall that there is an isomorphism
$$\bigoplus_{\bn\in \{0,2\}^r}
E^{\sigma_{w(\bj)}^{\bn}} \stackrel{f}{\cong}
\partial_{w(\bj)}^{\bun}E.$$
This motivates the following definition.

\begin{df}
Let $(E,f)$ be a split $n$-cube and let $\bi\in [0,k-1]^n$. The
$n$-cube $\wC_{\bi}(E)$ is defined by:
\begin{equation}\label{c5}
\wC_{\bi}(E)^{\bj} = \bigoplus_{\bm \in \{0,2\}^{r_w(\bj)}}
C_{\partial_{w(\bj)}(\bi)}
\big(\partial_{w(\bj)}^{\bun}E\big)^{\partial_{w(\bj)}(\sigma_{v(\bj)}^{\bm}(\bj))}.
\end{equation}
The morphisms in $\wC_{\bi}(E)$ are given as follows.
\begin{enumerate}[(i)]
\item If $l$ with $i_l=k-1$ does not exist, then the cube
$\wC_{\bi}(E)$ is split. \item If $i_l=k-1$, then the morphisms in
the cube are induced by the fixed isomorphisms
$\partial_{w(\bj)}^{\bun}(E)\cong \bigoplus_{\bn\in
\{0,2\}^{r_w(\bj)}}
\partial_{w(\bj)}^{\bn} E
$ and the canonical isomorphisms in lemma \ref{sum}.
\end{enumerate}
\end{df}
Since all isomorphisms are fixed, the following proposition is a
consequence of lemmas \ref{partial0}, \ref{partial11}, \ref{tens}
and \ref{sum},
\begin{prop}\label{partials} Let $(E,f)$ be a split $n$-cube, $\bi \in [0,k-1]^n$ and $l\in \{1,\dots,n\}$.
\begin{enumerate}[(i)]
\item If $i_l=0$, then $\partial_l^0 \wC_{\bi}(E)
=\wC_{\partial_l(\bi)}(\partial_l^0E)$. \item If $i_l\neq 0$, then
$\partial_l^0 \wC_{\bi}(E) =\partial_l^1\wC_{\bi-1_l}(E)$. \item
If $i_l= k-1$, then $\partial_l^2 \wC_{\bi}(E) =
\wC_{\partial_l(\bi)}(\partial_l^2E)$ and $\partial_l^1
\wC_{\bi}(E)= \wC_{\partial_l(\bi)}(\partial_l^1E)$. \item Lemma
\ref{tens} remains valid for the cubes $\wC_{\bi}(E)$.
\end{enumerate}
\end{prop}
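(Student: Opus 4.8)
The strategy is to reduce everything to the four lemmas just established (\ref{partial0}, \ref{partial11}, \ref{tens}, \ref{sum}) applied to the cubes $C_{\bi}(E)$, and then to transport the resulting identities across the fixed isomorphisms defining $\wC_{\bi}(E)$ in \eqref{c5}. The key observation is that the face operators $\partial_l^j$ commute with the direct-sum construction in \eqref{c5} in a controlled way: taking the $j$-th face in the $l$-th direction either leaves the index $l$ untouched (if $j_l\ne 1$, so $l$ does not appear in $w(\bj)$ or $v(\bj)$), or removes it from exactly one of the two lists $w(\bj)$, $v(\bj)$ according to whether $i_l=k-1$ or not. So the plan is to go through the four items, in each case writing out $\partial_l^j\wC_{\bi}(E)^{\bj}$ via \eqref{c5} applied to the multi-index $s_l^j(\bj)$, and matching it term by term with the right-hand side using the corresponding lemma for the $C_{\bi}$'s.

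For item (i), if $i_l=0$ then $l$ never lies in $w(\bj)$ (which requires $i_l=k-1\ge 1$), so on faces $\partial_l^0$ the construction \eqref{c5} simply replaces $E$ by $\partial_l^0 E$ and $\bi$ by $\partial_l(\bi)$; the inner $C$-cubes transform by Lemma \ref{partial0} (case $i_l=0$), and since when $i_l=0$ the cube $\wC_{\bi}(E)$ is split in the $l$-th direction (no index equals $k-1$ in that slot), there are no extra isomorphisms to worry about. For item (ii), when $i_l\ne 0$ one uses Lemma \ref{partial0} (case $i_l\ne 0$), which gives $\partial_l^0 C_{\bi}(E)=\partial_l^1 C_{\bi-1_l}(E)$; one must check that the defining isomorphisms of $\wC_{\bi}(E)$ and of $\wC_{\bi-1_l}(E)$ are compatible with this, which they are because $\nu(\bi)$ and $\nu(\bi-1_l)$ agree in the $l$-th slot unless $i_l=1$, and in the boundary case $i_l=1$ one checks the splitting is carried correctly. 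For item (iii), when $i_l=k-1$ Lemma \ref{partial11} gives $\partial_l^2 C_{\bi}(E)=C_{\partial_l(\bi)}(\partial_l^2E)$ directly, and Lemma \ref{sum} gives $\partial_l^1 C_{\bi}(E)\cong C_{\partial_l(\bi)}(\partial_l^0E\oplus\partial_l^2E)$; but the whole point of passing to $\wC$ is precisely that this last direct sum $\partial_l^0E\oplus\partial_l^2E$ has been replaced, via $f$, by $\partial_l^1E=\partial_{w(\bj)}^{\bun}E$, so $\partial_l^1\wC_{\bi}(E)=\wC_{\partial_l(\bi)}(\partial_l^1E)$ by construction. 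Item (iv) is immediate since \eqref{c5} expresses each component of $\wC_{\bi}(E)$ as a direct sum of components $C_{\partial_{w(\bj)}(\bi)}(\cdots)$, which are tensor products of Koszul complexes of the stated lengths by Lemma \ref{tens}, and tensoring and taking direct sums preserves the bidegree length structure.

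I would organize the write-up as: first note that \eqref{c5} makes the dependence on $E$, $f$ and $\bi$ ``natural'' in faces of type $0$ and $2$ in every direction not listed in $w(\bj)$; then handle (i) and (iv) as the easy cases; then (ii), taking care of the index bookkeeping $\bi\mapsto\bi-1_l$; then (iii), where the real content is that the morphisms in $\wC_{\bi}(E)$ were defined in part (ii) of the definition precisely by the isomorphisms $\partial_{w(\bj)}^{\bun}E\cong\bigoplus \partial_{w(\bj)}^{\bn}E$ composed with the Lemma \ref{sum} isomorphisms, so that after taking the $\partial_l^1$ face these combine into the corresponding data for $\wC_{\partial_l(\bi)}(\partial_l^1E)$.

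**Main obstacle.** The genuine difficulty is purely bookkeeping: one must verify that the \emph{fixed} isomorphisms chosen in the definition of $\wC_{\bi}(E)$ (coming from \eqref{direct} and from Lemma \ref{sum}) are mutually compatible when one passes to faces, i.e. that the diagram of canonical isomorphisms one writes down actually commutes and that no sign or reordering discrepancy creeps in when the lexicographic order on the direct summands is restricted to a face. This is where one has to be careful rather than clever; the hint in the Remark after Definition \ref{adams1} that canonical reordering isomorphisms are suppressed throughout is exactly what licenses treating these as honest equalities. Once that compatibility is granted, each of (i)--(iv) is a one-line consequence of the matching lemma for $C_{\bi}(E)$, so the proposition is, as stated, essentially a corollary of Lemmas \ref{partial0}--\ref{sum}.
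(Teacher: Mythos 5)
Your proposal is correct and takes essentially the same route as the paper, which simply asserts that the proposition follows from Lemmas \ref{partial0}, \ref{partial11}, \ref{tens} and \ref{sum} because all isomorphisms in \eqref{c5} are fixed; you have merely fleshed out that one-sentence argument by tracking how $w(\bj)$ and $v(\bj)$ behave under faces. Your identification of the real content of (iii) --- that the isomorphism of Lemma \ref{sum} becomes an equality once $\partial_l^0E\oplus\partial_l^2E$ is replaced by $\partial_l^1E$ via $f$ --- is exactly the point the paper leaves implicit.
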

\begin{flushright}
 $\square$
 \end{flushright}

\begin{obs} Let $(E,f)$ be a split $n$-cube.
Observe that by the choice of isomorphisms, for every $\bj$ with $j_l=1$ and
$\bi$ with $i_l=k-1$, the arrows
$$\wC_{\bi}(E)^{\sigma_l^0(\bj)} \rightarrow \wC_{\bi}(E)^{\bj}
\quad\textrm{and}\quad\wC_{\bi}(E)^{\bj} \rightarrow
\wC_{\bi}(E)^{\sigma_l^2(\bj)}$$ are induced by the arrows
$$\partial_l^0E \rightarrow \partial_l^1E,\quad \partial_l^1E \rightarrow \partial_l^2E
 \quad\textrm{and}\quad \partial_l^2E\rightarrow \partial_l^0E\oplus \partial_l^2E \xrightarrow{f} \partial_l^1E.$$
 \end{obs}
 
 \subsection{Definition of $\Psi^k$}
In this section, we define a morphism
$$\Z \Sp\nolimits_n(X) \xrightarrow{\Psi^k}  \Z SG^k(X)_n $$
using the cubes $\wC_{\bi}(E)$ constructed in the previous
section.

Recall that when $i_l\neq k-1$, the exact sequence
$\partial_l^2\wC_{\bi}(E)$ is canonically isomorphic to the simple associated
to a $2$-iterated cochain complex  of $(n-1)$-cubes, of lengths
$(k-i_l-1,i_l+1)$. We define
\begin{eqnarray*}
\Z \Sp\nolimits_n(X) &\xrightarrow{\Psi^k} & \Z SG^k(X)_n =
\bigoplus_{m=1}^{k-1}\Z SG^{m,k}_2(X)_{n-1}\oplus
\Z SG^k_1(X)_{n} \\
(E,f) &\mapsto &
(\Psi^{1,k}_2(E),\dots,\Psi^{k-1,k}_2(E),\Psi^k_1(E))
\end{eqnarray*}
with
\begin{eqnarray*}
\Psi^k_1(E) &=&\sum_{\bi\in [0,k-1]^n}\wC_{\bi}(E), \\
\Psi^{m,k}_2(E) &=& \sum_{l=1}^n(-1)^{m+l+1}\sum_{\substack{\bi\in [0,k-1]^n,\\
i_l=m-1}} \partial_{l}^2\wC_{\bi}(E),\quad \textrm{for
}m=1,\dots,k-1,
\end{eqnarray*}
where in the last equality we consider, by proposition
\ref{partials} $(iv)$, $\partial_{l}^2\wC_{\bi}(E)$ as a
$2$-iterated complex.

\begin{obs}\label{iso2}
Observe that in the last definition, considering
$\partial_{l}^2\wC_{\bi}(E)$ as a tensor product of complexes
involves changing the order in the summations. To be precise,
recall that the terms corresponding to the indices $\bn$ with
$n_l=0$ form the first complex in the tensor product, and the ones
with $n_l=1$ form the second complex. The tensor product of the
Koszul complexes in each summand of $\wC_{\bi}(E)$ is ordered by
the lexicographic order. Hence, the two orders would only agree
for $l=1$. For instance,
\begin{enumerate*}[$\rhd$]
\item the face $\partial_2^2$ of the cube $C_{00}(E)$ (see example
$k=2,n=2$), is $$[\Psi^1(E^{00})\otimes
\Psi^1(E^{02}),\Psi^1(E^{00})\otimes \Psi^1(E^{22})\oplus
\Psi^1(E^{02})\otimes \Psi^1(E^{20})],$$ \item this complex,
viewed as a tensor product of complexes, is {\small $$ \setlength\arraycolsep{-1pt}
\left[\begin{array}{c} \xymatrix@C=0.4cm{ E^{00}\otimes E^{02}
\ar[r]\ar[d] & E^{00}\otimes E^{02} \ar[d] \\ E^{00}\otimes E^{02}
\ar[r] & E^{00}\otimes E^{02}
}\end{array},\begin{array}{c}\xymatrix@C=0.5cm{E^{00}\otimes
E^{22} \oplus E^{20}\otimes E^{02}  \ar[r]\ar[d] & E^{00}\otimes
E^{22} \oplus E^{20}\otimes E^{02}  \ar[d] \\ E^{00}\otimes E^{22}
\oplus E^{20}\otimes E^{02} \ar[r] & E^{00}\otimes E^{22} \oplus
E^{20}\otimes E^{02} } \end{array}\right] $$}
\end{enumerate*}
Notice the difference in the order of $\Psi^1(E^{20})\otimes
\Psi^1(E^{02})$.

Hence, strictly speaking, $\Psi^k$ cannot be a chain morphism.
However, for every split $n$-cube, the composition of $\Psi^k$
with $\varphi$ leads to a collection of $n$-cubes (see the
definition of $\varphi$ in \eqref{morphismvarphi}). The locally
free sheaves of these cubes are direct sums, tensor products,
exterior products and symmetric products of the locally free
sheaves $E^{\bj}$.

We can map, with the corresponding canonical isomorphism, every
$n$-cube of $\varphi\circ\Psi^k(E)$ to the $n$-cube whose summands
are all ordered by the lexicographic order. Then,
$\varphi\circ\Psi^k$ is a chain morphism.

This trick can only be performed after the composition with
$\varphi$, and cannot be corrected in the definition of $\Psi^k$.
\end{obs}

\begin{prop} Let $E$ be a split $n$-cube. Then, there is a canonical isomorphism
$$d_s \Psi^k(E) \cong_{can} \Psi^k(dE). $$
\end{prop}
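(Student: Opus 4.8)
The plan is to verify the claimed isomorphism componentwise. Write $\Psi^k(E)=(\Psi^{1,k}_2(E),\dots,\Psi^{k-1,k}_2(E),\Psi^k_1(E))$ and recall that the differential of $\Z SG^k(X)_*$ sends $(B_1,\dots,B_{k-1},A)$ to $(-dB_1,\dots,-dB_{k-1},\sum_m(-1)^m\Phi^m(B_m)+dA)$. Thus the proposition is equivalent to the two assertions
\begin{enumerate}[(a)]
\item $-d\,\Psi^{m,k}_2(E)\cong_{can}\Psi^{m,k}_2(dE)$ for every $m=1,\dots,k-1$;
\item $d\,\Psi^k_1(E)+\sum_{m=1}^{k-1}(-1)^m\Phi^m(\Psi^{m,k}_2(E))\cong_{can}\Psi^k_1(dE)$,
\end{enumerate}
where $\wC_{\bi}$ is extended $\Z$-linearly to chains, so that $\wC_{\bi}(dE)=\sum_{r=1}^n\sum_{j=0,1,2}(-1)^{r+j}\wC_{\bi}(\partial_r^jE)$. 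This is the general form of the explicit verifications carried out in the examples of Section~\ref{adams2}, and the only tools needed are Proposition~\ref{partials}, Lemma~\ref{tens}, the face identities \eqref{identities3}, and the reordering isomorphisms of Remark~\ref{iso2}.

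For (b) I would expand $d\,\Psi^k_1(E)=\sum_{\bi\in[0,k-1]^n}\sum_{l=1}^n\sum_{j=0,1,2}(-1)^{l+j}\partial_l^j\wC_{\bi}(E)$ and sort the terms by the case analysis of Proposition~\ref{partials}. By part~(ii), the summand $(-1)^l\partial_l^0\wC_{\bi}(E)$ with $i_l\neq 0$ equals $(-1)^l\partial_l^1\wC_{\bi-1_l}(E)$; reindexing $\bi\mapsto\bi-1_l$ turns the collection of these into $\sum_l\sum_{i_l\neq k-1}(-1)^l\partial_l^1\wC_{\bi}(E)$, which cancels the summands $(-1)^{l+1}\partial_l^1\wC_{\bi}(E)$ with $i_l\neq k-1$. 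By parts~(i) and~(iii), the summands $(-1)^l\partial_l^0\wC_{\bi}(E)$ with $i_l=0$ and $(-1)^{l+1}\partial_l^1\wC_{\bi}(E)$, $(-1)^l\partial_l^2\wC_{\bi}(E)$ with $i_l=k-1$ assemble (after reindexing $\bi\mapsto\partial_l(\bi)$) into $\sum_{\bi'\in[0,k-1]^{n-1}}\sum_{l,j}(-1)^{l+j}\wC_{\bi'}(\partial_l^jE)=\Psi^k_1(dE)$. What is left over is exactly $\sum_{l=1}^n(-1)^l\sum_{i_l\neq k-1}\partial_l^2\wC_{\bi}(E)$. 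By Proposition~\ref{partials}(iv) and Lemma~\ref{tens} each such $\partial_l^2\wC_{\bi}(E)$ is the simple complex of a $2$-iterated complex of lengths $(k-i_l-1,i_l+1)$, and up to the canonical reordering of the Koszul tensor factors described in Remark~\ref{iso2} the operator $\Phi^m$ recovers that exact sequence from its $2$-iterated form; hence $\sum_{m=1}^{k-1}(-1)^m\Phi^m(\Psi^{m,k}_2(E))\cong_{can}\sum_{m=1}^{k-1}\sum_l(-1)^{l+1}\sum_{i_l=m-1}\partial_l^2\wC_{\bi}(E) = -\sum_l(-1)^l\sum_{i_l\neq k-1}\partial_l^2\wC_{\bi}(E)$, using $\bigcup_{m=1}^{k-1}\{i_l=m-1\}=\{0\le i_l\le k-2\}$. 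These leftover terms therefore cancel, leaving $\Psi^k_1(dE)$, which is (b).

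For (a) I would argue in the same way one dimension lower. Expanding $-d\,\Psi^{m,k}_2(E)$ using $\Psi^{m,k}_2(E)=\sum_l(-1)^{m+l+1}\sum_{i_l=m-1}\partial_l^2\wC_{\bi}(E)$, each face $\partial_a^j$ of the $(n-1)$-cube $\partial_l^2\wC_{\bi}(E)$ can be rewritten, by the commutation relation \eqref{commutfaces} (equivalently the first identity in \eqref{identities3}), as $\partial_{l'}^2\partial_{a'}^j\wC_{\bi}(E)$ for the appropriately shifted direction $a'\neq l$ and $l'\in\{l-1,l\}$. Applying Proposition~\ref{partials}(i)--(iii) to $\partial_{a'}^j\wC_{\bi}(E)$ produces exactly the dichotomy of (b): the contributions with $j=0,\,i_{a'}\neq0$ cancel those with $j=1,\,i_{a'}\neq k-1$ after reindexing $\bi\mapsto\bi-1_{a'}$ (the constraint $i_l=m-1$ being unaffected since $a'\neq l$), while the contributions with $j=0,\,i_{a'}=0$ and $j=1,2,\,i_{a'}=k-1$ reassemble into $\sum_l(-1)^{m+l+1}\sum_{i'_l=m-1}\sum_{r,j}(-1)^{r+j}\partial_l^2\wC_{\bi'}(\partial_r^jE)=\Psi^{m,k}_2(dE)$ (the overall sign from $-d$ combining with the face sign). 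Here the hypothesis $i_l=m-1<k-1$ is used through Lemma~\ref{tens} to know that $\partial_l^2\wC_{\bi}(E)$ is a genuine $2$-iterated complex and that the remaining faces commute with the $\partial_l^2$ already taken.

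The main obstacle is entirely the bookkeeping: tracking the signs through the reindexings and, above all, through the shifts of cube directions produced when one face is composed with another (governed by \eqref{identities3}); and keeping a disciplined account of the canonical reordering isomorphisms of Remark~\ref{iso2}, since the tensor factors of the Koszul complexes appear in lexicographic order in the cubes $\wC_{\bi}(E)$ but in the ``$n_l=0$ then $n_l=1$'' order in the $2$-iterated picture used by $\Phi^m$ --- this discrepancy is exactly why one obtains $\cong_{can}$ rather than a literal equality. It is also worth checking at the outset that the $\Z$-linear extension of $\wC_{\bi}$ interacts correctly with all of these manipulations, so that the identity $\wC_{\bi}(dE)=\sum_{r,j}(-1)^{r+j}\wC_{\bi}(\partial_r^jE)$ may be used termwise; once the sorting in (b) is organized, (a) follows by the same mechanism applied inside a face.
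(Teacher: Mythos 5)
Your treatment of the second identity (the one involving $\Psi^k_1$ and $\Phi^m(\Psi^{m,k}_2)$) follows the paper's proof faithfully: you expand $d\Psi^k_1(E)$, use Proposition~\ref{partials} to cancel the $\partial_l^0$-with-$i_l\neq 0$ contributions against the $\partial_l^1$-with-$i_l\neq k-1$ contributions, reassemble the boundary cases into $\Psi^k_1(dE)$, and identify the leftover $\partial_l^2$-with-$i_l\neq k-1$ contributions with $-\sum_m(-1)^m\Phi^m(\Psi^{m,k}_2(E))$ up to the reordering of Remark~\ref{iso2}. That matches the paper's computation term for term.

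Your argument for the first identity, $-d\Psi^{m,k}_2(E)\cong_{can}\Psi^{m,k}_2(dE)$, is where there is a real gap. You rewrite each $\partial_a^j(\partial_l^2\wC_{\bi}(E))$ as $\partial_{l'}^2\partial_{a'}^j\wC_{\bi}(E)$ with $a'\neq l$ and then appeal to the dichotomy of Proposition~\ref{partials} as in part~(b). But that dichotomy handles only $j=0$ (all cases), $j=1$ (all cases), and $j=2$ \emph{with} $i_{a'}=k-1$. The case $j=2$ with $i_{a'}\neq k-1$ produces $\partial_{l'}^2\partial_{a'}^2\wC_{\bi}(E)$ where $\partial_{a'}^2\wC_{\bi}(E)$ is, by Lemma~\ref{tens}, a tensor-product exact sequence that does not take the form $\wC_{\bi'}(\partial_{a'}^jE)$ and does not cancel against any of the other families you describe. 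You never say what happens to those terms. Moreover, the reassembly you write down includes the $j=2$ contributions $\partial_l^2\wC_{\bi'}(\partial_r^2E)$ as part of $\Psi^{m,k}_2(dE)$, so your bookkeeping has to account for how the problematic $j=2$, $i_{a'}\neq k-1$ terms disappear, and it does not.

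The paper resolves exactly this by opening the proof of \eqref{eq3} with two preliminary identities: the alternating sums
\[
\sum_{r=1}^{n-1}(-1)^r\,\partial_r^2\,\Psi^{m,k}_2(E)=0
\qquad\text{and}\qquad
\sum_{r=1}^{n}(-1)^r\,\Psi^{m,k}_2(\partial_r^2 E)=0,
\]
both consequences of the alternating sign $(-1)^{m+l+1}$ built into the definition of $\Psi^{m,k}_2$ together with Proposition~\ref{partials}(iii). These kill the entire $\partial^2$ contribution at once, on both sides of the desired equation, and reduce the problem to showing
\[
\sum_{r=1}^{n}(-1)^r\bigl(\Psi^{m,k}_2(\partial_r^0E)-\Psi^{m,k}_2(\partial_r^1E)\bigr)
=-\sum_{r=1}^{n-1}(-1)^r\,(\partial_r^0-\partial_r^1)\,\Psi^{m,k}_2(E),
\]
which the paper then establishes by splitting the $r$-sum according to $r<l$ versus $r\geq l$ and applying the commutation relations and Proposition~\ref{partials}(i)--(ii). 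Without these two vanishing identities, your sorting argument for the $j=0,1$ cases is sound in spirit but leaves the $j=2$ leftovers unaccounted for; with them, the troublesome terms simply do not appear. You should add these vanishing observations to your outline of (a) before running the cancellation argument, and restrict the reassembly to the $j=0$ and $j=1$ faces only.
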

\begin{proof}
We have to see that
\begin{eqnarray}
\Psi^{m,k}_2(dE) &=& -d\Psi^{m,k}_2(E),\quad \textrm{for }m=1,\dots,k-1, \label{eq3} \\
\Psi^k_1(dE)&=& \sum_{m=1}^{k-1}(-1)^m\Phi^m
(\Psi^{m,k}_2(E))+d\Psi^k_1(E).\label{eq4}
\end{eqnarray}

We start by proving \eqref{eq4}. By definition,
$$
d\Psi^k_1(E) =  \sum_{\bi\in [0,k-1]^n}d\wC_{\bi}(E) =\sum_{\bi\in
[0,k-1]^n}\sum_{l=1}^n\sum_{s=0}^2 (-1)^{l+s}
\partial_l^s\wC_{\bi}(E). $$

Then, by proposition \ref{partials}, {\small \begin{eqnarray*}
d\Psi^k_1(E)& \cong_{can} & \sum_{l=1}^n (-1)^l \Bigg[\sum_{\bi\in
[0,k-1]^n, i_l=0}
\wC_{\partial_l(\bi)}(\partial_l^0E)-\sum_{\substack{\bi\in
[0,k-1]^n,\\
i_l=k-1}} \wC_{\partial_l(\bi)}(\partial_l^1E) \\
&& \hspace{1.3cm} +\sum_{\substack{\bi\in [0,k-1]^n \\ i_l=k-1}}
\wC_{\partial_l(\bi)}(\partial_l^2E)+
\sum_{m=0}^{k-2}\sum_{\substack{\bi\in [0,k-1]^n\\
i_l=m}}\Phi^{i_l+1}(\partial_l^2\wC_{\bi}(E)) \Bigg].
\end{eqnarray*}}
 Therefore,
{\small \begin{eqnarray*}
 d\Psi^k_1(E) & \cong_{can}& \sum_{l=1}^n \sum_{s=0}^2
(-1)^{l+s} \sum_{\bi\in [0,k-1]^{n-1}} \wC_{\bi}(\partial_l^sE) \\ && +
\sum_{l=1}^n (-1)^l \sum_{m=0}^{k-2}\sum_{\substack{\bi\in
[0,k-1]^n,\\i_l=m}}\Phi^{m+1}(\partial_l^2\wC_{\bi}(E))
\\
&=& \Psi^k_1(dE) -\sum_{m=1}^{k-1}(-1)^m\Phi^m (\Psi^{m,k}_2(E)),
\end{eqnarray*}}
and equality \eqref{eq4} is proven. Let us prove now \eqref{eq3}.
First of all observe that due to the alternating sum of $\partial_l^2$ in the definition of 
$\Psi^{m,k}_2$, we have
$$\sum_{r=1}^{n-1}(-1)^r \partial_r^2\Psi^{m,k}_2(E)=0. $$
By the same argument, considering the first equality of $(iii)$ in proposition \ref{partials}, we have 
$$\sum_{r=1}^{n}(-1)^r\Psi^{m,k}_2( \partial_r^2 E)=0. $$
Therefore, we are left to see that
$$\sum_{r=1}^{n}(-1)^r(\Psi^{m,k}_2( \partial_r^0 E)-\Phi^{m,k}_2( \partial_r^1 E))=-\sum_{r=1}^{n-1}(-1)^r( \partial_r^0- \partial_r^1)\Psi^{m,k}_2( E).$$

Recall that for all $j$, $\quad
\partial_r^j\partial_{l}^1= \left\{\begin{array}{ll}
\partial_l^1\partial_{r+1}^j, & \textrm{if }r\geq l, \\
\partial_{l-1}^1\partial_{r}^j, & \textrm{if }r < l. \end{array}
 \right.$

Hence, we split the following expression accordingly:
$$\sum_{r=1}^{n-1}(-1)^r  \partial_r^0 \Psi^{m,k}_2( E) =(A)+(B),$$
with:
\begin{eqnarray*}
  (A) &=& \sum_{l=2}^n\sum_{r=1}^{l-1} (-1)^{m+l+r+1} \sum_{\substack{\bi\in [0,k-1]^n\\
i_l=m-1}}  \partial_r^0 \partial_{l}^2\wC_{\bi}(E). \\
   (B) &=& \sum_{l=1}^{n-1}\sum_{r=l}^{n-1} (-1)^{m+l+r+1} \sum_{\substack{\bi\in [0,k-1]^n\\
i_l=m-1}}  \partial_r^0 \partial_{l}^2\wC_{\bi}(E).
  \end{eqnarray*}
  
  Using the mentioned equalities for the faces and the identities $(i)-(iii)$ of proposition \ref{partials}, we have:
  \begin{eqnarray*}
  (A) &=& \sum_{l=2}^n\sum_{r=1}^{l-1} (-1)^{m+l+r+1} \sum_{\substack{\bi\in [0,k-1]^n\\
i_l=m-1}}  \partial_{l-1}^2\partial_r^0\wC_{\bi}(E) \\
&=& \sum_{l=2}^n\sum_{r=1}^{l-1} (-1)^{m+l+r+1} \sum_{\substack{\bi\in [0,k-1]^n\\
i_l=m-1 \\ i_r\neq 0}}  \partial_{l-1}^2\partial_r^1\wC_{\bi-1_{l}}(E) +
\sum_{\substack{\bi\in [0,k-1]^n\\
i_l=m-1 \\ i_r=0}} \partial_{l-1}^2\wC_{\partial_l(\bi)}(\partial_r^0 E) \\
&=& \sum_{l=2}^n\sum_{r=1}^{l-1} (-1)^{m+l+r+1} \sum_{\substack{\bi\in [0,k-1]^n\\
i_l=m-1 }}  \partial_r^1\partial_{l}^2\wC_{\bi}(E) - \sum_{\substack{\bi\in [0,k-1]^n\\
i_l=m-1 \\ i_r= k-1}}  \partial_{l-1}^2\partial_{r}^1\wC_{\bi}( E) + \\ && \sum_{l=1}^{n-1}\sum_{r=1}^{l} (-1)^{m+l+r} 
\sum_{\substack{\bi\in [0,k-1]^{n-1}\\
i_{l}=m-1 }}  \partial_{l}^2\wC_{\bi}(\partial_r^0 E) \\
&=& \sum_{l=2}^n\sum_{r=1}^{l-1} (-1)^{m+l+r+1} \sum_{\substack{\bi\in [0,k-1]^n\\
i_l=m-1 }}  \partial_r^1\partial_{l}^2\wC_{\bi}(E) + \\
&&\sum_{l=1}^{n-1}\sum_{r=1}^{l} (-1)^{m+l+r}
\sum_{\substack{\bi\in [0,k-1]^{n-1}\\ i_{l}=m-1 }}  \partial_{l}^2\wC_{\bi}(\partial_r^0 E) -\partial_{l}^2\wC_{\bi}(\partial_r^1 E) 
\end{eqnarray*}
Reasoning analogously, we obtain
 \begin{eqnarray*}
  (B) &=& \sum_{l=1}^{n-1}\sum_{r=l}^{n-1} (-1)^{m+l+r+1} \sum_{\substack{\bi\in [0,k-1]^n\\
i_l=m-1 }}  \partial_r^1\partial_{l}^2\wC_{\bi}(E) + \\
&&\sum_{l=1}^{n-1}\sum_{r=l+1}^{n} (-1)^{m+l+r}
\sum_{\substack{\bi\in [0,k-1]^{n-1}\\ i_{l}=m-1 }}  \partial_{l}^2\wC_{\bi}(\partial_r^0 E) -\partial_{l}^2\wC_{\bi}(\partial_r^1 E)
\end{eqnarray*}

It follows that
\begin{eqnarray*}
  \sum_{r=1}^{n-1}(-1)^r  \partial_r^0 \Psi^{m,k}_2( E)  &=&  \sum_{r=1}^{n-1}(-1)^r  \partial_r^1 \Psi^{m,k}_2( E)  - \sum_{r=1}^{n}(-1)^r \big(\partial_{l}^2\wC_{\bi}(\partial_r^0 E) -\partial_{l}^2\wC_{\bi}(\partial_r^1 E)  \big)\\ &=&
  \sum_{r=1}^{n-1}(-1)^r  \partial_r^1 \Psi^{m,k}_2( E)  - \sum_{r=1}^{n}(-1)^r  \big( \Psi_2^{k,m}(\partial_r^0E)-\Psi_2^{k,m}(\partial_r^1E)\big)
\end{eqnarray*}
as desired.
\end{proof}

For every $n$, let
$$\Psi^k:\Z \Spn_n(X) \rightarrow \Z C_n(X)$$ be the composition
$$ \Psi^k: \Z\Spn_n(X) \xrightarrow{\Psi^k} \Z SG^k(X)_n \xrightarrow{\mu\circ \varphi} \Z C_*(X),$$
modified by the canonical isomorphisms, so that every direct sum
of tensor, exterior and symmetric products is ordered by the
lexicographic order of the corresponding multi-indices.

\begin{cor}\label{adamssplitcor} For every scheme $X$, there is a well-defined chain morphism
  $$\Psi^k : \Z \Sp\nolimits _*(X) \rightarrow \Z C_n(X).$$
\end{cor}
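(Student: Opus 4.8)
The plan is to take the morphism already written down just above the statement, namely the composite
$$
\Z\Spn_*(X)\xrightarrow{\ \Psi^k\ }\Z SG^k(X)_*\longrightarrow \Z G^k(X)_*\xrightarrow{\ \varphi\ }\Z C^{arb}_*(X)\xrightarrow{\ \mu\ }\Z C_*(X),
$$
in which the unlabelled arrow forgets the splittings, and then to post-compose with the normalization that rewrites, via the canonical isomorphisms, every direct sum of tensor, symmetric and exterior products of the sheaves $E^{\bj}$ in lexicographic order. First I would record that each stage is an honest group homomorphism: $\Psi^k$ is prescribed on the generators of $\Z\Spn_*(X)$, the split $n$-cubes $(E,f)$, by the explicit formulas for $\Psi^k_1(E)=\sum_{\bi}\wC_{\bi}(E)$ and $\Psi^{m,k}_2(E)$ and extended $\Z$-linearly, and the cubes $\wC_{\bi}(E)$ involve no choice beyond the given splitting $f$ (while $C_{\bi}(E)$ does not involve $f$ at all), so there is nothing to check for well-definedness. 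The forgetful map, $\varphi$ and $\mu$ have already been shown to be chain morphisms (the lemma stating that $\varphi$ is a chain morphism, Lemma \ref{mus} for $\mu$, and the compatibility of the faces $\partial_l^j$ on split cubes for the forgetful map). Hence the only thing to prove is that the composite commutes with the differentials.

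For that I would invoke the proposition proved immediately above, $d_s\Psi^k(E)\cong_{can}\Psi^k(dE)$, and push it through the three chain morphisms to obtain, in $\Z C_{n-1}(X)$,
$$
d\bigl(\mu\,\varphi\,\Psi^k(E)\bigr)\ \cong_{can}\ \mu\,\varphi\,\Psi^k(dE).
$$
The substantive step is to check that the lexicographic normalization collapses this $\cong_{can}$ into an equality. For this I would argue that every canonical isomorphism occurring in the proof of that proposition, together with those hidden in Remark \ref{iso2} (coming from viewing $\partial_l^2\wC_{\bi}(E)$ as a $2$-iterated complex), is an iterate of the associativity and commutativity constraints of $\oplus$ and of the additivity isomorphism \eqref{direct} for Koszul complexes of a direct sum. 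Once $\varphi$ has replaced each acyclic (iterated) cochain complex by the alternating sum of its total objects and $\mu$ has resolved each exact sequence into the cubes built from its kernels, every cube in the image carries, in each of its $3^n$ positions, a sheaf which is an explicit finite direct sum of tensor products of symmetric and exterior powers of the $E^{\bj}$ — and these are precisely the sheaves on which the lexicographic normalization acts. Two sheaves differing by a composite of the above constraints therefore normalize to the same lexicographically ordered sheaf, so the two cubes become equal in $\Z C_{n-1}(X)$, whence $d\circ\Psi^k=\Psi^k\circ d$.

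The main obstacle, exactly as flagged in Remark \ref{iso2}, is that this last argument cannot be carried out any earlier in the composite: at the level of $\Z SG^k(X)_*$ or $\Z G^k(X)_*$, permuting a tensor product moves entries between bidegrees and genuinely changes the iterated cochain complex, so $\Psi^k$ itself is only a chain map up to $\cong_{can}$ and the defect cannot be repaired there. The real work is therefore the bookkeeping — using the explicit faces of the $\wC_{\bi}(E)$ from Proposition \ref{partials} together with Lemma \ref{tens} and Lemma \ref{sum}, and the formulas for $\varphi_1,\varphi_2$ and $\mu^j$ — that verifies that after $\mu\circ\varphi$ every remaining discrepancy has collapsed to a reordering of direct summands, or an instance of \eqref{direct}, inside a single cube, and so is annihilated on passing to lexicographic order. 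Granting that, the corollary is immediate from the proposition just proved.
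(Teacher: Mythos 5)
Your proposal is correct and takes exactly the route the paper intends: the corollary's proof is left unwritten (the box stands alone after the preceding proposition), and the argument is precisely composing $\Psi^k$ with the forgetful map, $\varphi$, and $\mu$, invoking the proposition $d_s\Psi^k(E)\cong_{can}\Psi^k(dE)$, and absorbing the remaining canonical isomorphisms by the lexicographic normalization as explained in Remark \ref{iso2}. You have correctly identified both the mechanical content and the one genuine subtlety, namely that the normalization can only be performed after the composite with $\mu\circ\varphi$ and not at the level of $\Z SG^k(X)_*$, which is exactly the point the paper flags in that remark.
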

\begin{flushright}
  $\square$
\end{flushright}

\begin{ex}[$\mathbf{k=2,n=1}$] Let us have a look at the situation with $k=2$ and $n=1$, as considered in the example cases (see section \ref{adams2}). Let $(E,f): E^0\rightarrow E^1 \rightarrow E^2$ be a split cube (with splitting $f$). In the complex $\Z SG^k(X)_*$, the image of $(E,f)$ by $\Psi^2$ is
  $$(\Psi^1(E^0)\otimes
\Psi^1(E^2), \widetilde{C}_0(E,f)+\widetilde{C}_1(E,f))\in \Z G_2^{1,2}(X)_0 \oplus \Z G_1^2(X)_1.$$
  Applying $\varphi$ to this element, we get the following linear combination of short exact sequences
  \begin{align*}
-3(E^0\otimes E^2 \rightarrow E^0\otimes E^2) + (E^0\otimes E^2 \rightarrow E^0\otimes E^2\oplus E^0\otimes E^2 \rightarrow E^0\otimes E^2)
 -\\ -2(\wedge^2E^0  \rightarrow \wedge^2E^0 \oplus E^0\otimes E^2\rightarrow E^0\otimes E^2)-2(\wedge^2E^0 \oplus E^0\otimes E^2 \rightarrow\wedge^2E^1  \rightarrow \wedge^2E^2) + \\ +(E^0\otimes E^0  \rightarrow E^0\otimes E^0 \oplus E^0\otimes E^2\oplus E^0\otimes E^2\rightarrow E^0\otimes E^2\oplus E^0\otimes E^2) + \\ +(E^0\otimes E^0 \oplus E^0\otimes E^2\oplus E^0\otimes E^2 \rightarrow E^1\otimes E^1 \rightarrow E^2\otimes E^2).
 \end{align*} 
By the action of $\mu$, a short exact sequence $A\rightarrow B \xrightarrow{g} C$ transforms into 
$$-(0\rightarrow A \rightarrow \ker g)+(\ker g \rightarrow B \rightarrow C).$$
 Therefore, the explicit expression of $\Psi^2(E,f)$ is obtained by transforming by $\mu$ each of the short exact sequences of the linear combination detailed above.
 
 One easily checks that the differential of $\Psi^2(E,f)$ is $-2(-\wedge^2E^0 +\wedge^2E^1 -\wedge^2E^2) + (-E^0\otimes E^0 + E^1\otimes E^1 -E^2\otimes E^2)$ which is exactly $-\Psi^2(E^0) + \Psi^2(E^1)-\Psi^2(E^2)$ as desired.

\end{ex}

\section{The transgression morphism}\label{transgression}
Fix $\mathcal{C}_{B}$ to be a category of  schemes over a  base
scheme $B$. In this section, we introduce all the ingredients for
the definition of Adams operations on the rational algebraic
$K$-theory of a regular noetherian scheme.

Let $X$ be a scheme. We
first define a chain complex $\wZ C_*^{\widetilde{\square}}(X)$
that is the target for the Adams operations. Then, we prove that
it is quasi-isomorphic   to the chain complex of cubes with
rational coefficients. Hence, its rational homology groups are
isomorphic to the rational $K$-groups. Finally, we define a
morphism, \emph{the transgression morphism}, from $\Z C_*(X)$ to a
new chain complex $\Z \Spn_*^{\square}(X)$, whose image consists
only of split cubes. Then, for each $k$, the morphism $\Psi^k$
defined in the previous section induces a morphism
$$\Psi^k:\Z \Spn_*^{\square}(X)\rightarrow \wZ C_*^{\widetilde{\square}}(X).$$
Composing with the transgression morphism we obtain a chain
complex (denoted, by abuse of notation, by $\Psi^k$):
$$\Psi^k:NC_*(X)\rightarrow \Z C_*(X) \xrightarrow{T} \Z\Spn_*^{\square}(X)  \rightarrow \wZ C_*^{\widetilde{\square}}(X).$$

\subsection{The transgression chain complex}\label{projectiucocubical}
 Let $\P^1=\P^1_B$ be the
projective line over the base scheme $B$ and let
$$ \square = \P^1\setminus \{1\}\cong \A^1.$$
The cartesian product $(\P^1)^{\cdot}$ has a cocubical scheme structure.
Specifically, the face and degeneracy maps
\begin{eqnarray*}
\delta_j^i : (\P^1)^n &\rightarrow & (\P^1)^{n+1},\quad i=1,\dots,n,\ j=0,1, \\
\sigma^i : (\P^1)^n &\rightarrow & (\P^1)^{n-1} ,\quad
i=1,\dots,n,
\end{eqnarray*}
are defined as
\begin{eqnarray*}
  \delta_0^i(x_1,\dots,x_n) &=& (x_1,\dots,x_{i-1},(0:1),x_{i},\dots,x_n), \\
\delta_1^i(x_1,\dots,x_n) &=& (x_1,\dots,x_{i-1},(1:0),x_{i},\dots,x_n), \\
\sigma^i(x_1,\dots,x_n) &=& (x_1,\dots,x_{i-1},x_{i+1},\dots,x_n).
\end{eqnarray*}
These maps satisfy the usual
identities for a cocubical object in a category, and leave
invariant $\square$. Hence, both $(\P^1)^{\cdot}$ and $
\square^{\cdot}$ are cocubical schemes.

Let $X\times (\P^1)^n$ and $X\times \square^n$ denote $X\times _{B}(\P^1)^n$
and $X\times _{B}\square^n$ respectively. Since most of the constructions will
be analogous for $\P^1$ and for $\square$, we write
$$\B = \P^1 \textrm{ or }\ \square. $$

For $i=1,\dots,n$ and $j=0,1$, consider the chain morphisms induced on the
complex of cubes
\begin{eqnarray*}
\delta^j_i=(Id\times \delta_j^i)^* & : & \Z C_*(X\times \B^n)  \rightarrow  \Z
C_*(X\times \B^{n-1}),
\\ \sigma_i=(Id\times
\sigma^i)^* & : &  \Z C_*(X\times \B^{n-1})  \rightarrow  \Z C_*(X\times
\B^{n}).
\end{eqnarray*}
These maps endow $\Z C_*(X\times \B^{\cdot})$ with a cubical chain
complex structure. Observe that if $(x_1:y_1),\dots,(x_n:y_n)$ are
homogeneous coordinates of $(\P^1)^n$, then $\delta_i^0$
corresponds to the restriction map  to the hyperplane $x_i=0$ and
$\delta_i^1$ corresponds to the restriction map to the hyperplane
$y_i=0$. On the affine lines, with coordinates $(t_1,\dots,t_n)$
where $t_i=\frac{x_i}{y_i}$, the map $\delta_i^0$ corresponds
to the restriction map to the hyperplane $t_i=0$ and the map
$\delta_i^1$ to the restriction map to the hyperplane $t_i=\infty$.

Let  $\Z C^{\B}_{*,*}(X)$ be the 2-iterated chain complex given by
$$\Z C^{\B}_{r,n}(X): = \Z C_{r}(X\times\B^{n}),$$ and differentials
$$d = d_{C_*(X\times\B^{n})},\qquad \delta = \sum (-1)^{i+j}\delta^j_i.$$
Denote by $(\Z C^{\B}_{*}(X),d_s)$ the associated simple complex.

Observe that, by functoriality, the face and degeneracy maps
$\partial_i^j$ and $s_i^j$, as defined in sections \ref{iterated} and \ref{cubessection}, commute with $\delta_i^j$ and $\sigma_i$.
Therefore, there are analogous $2$-iterated chain complexes
\begin{eqnarray*}
\wZ C^{\B}_{r,n}(X)&:=& \Z C_{r}(X\times \B^n) / \Z D_{r}(X\times \B^n), \\
NC^{\B}_{r,n}(X)&:=& N C_{r}(X\times \B^n).
\end{eqnarray*}
Recall from section \ref{sectionnormalized} that $N C_{*}(X\times \B^n)$ is the normalized complex of
cubes in $X\times \B^n$ and from section \ref{cubessection} that $\Z D_{*}(X\times \B^n)$ is the
complex of degenerate cubes in $X\times \B^n$. That is, we consider the normalized complex of cubes and the
quotient by degenerate cubes to the first direction of the $2$-iterated complex
$\Z C^{\B}_{r,n}(X)$.

In addition, the second direction of these $2$-iterated complexes
corresponds to the chain complex associated to a cubical abelian
group. Therefore, one has to factor out by the degenerate
elements.

Let $\Z C_*(X\times \square^n)_{deg}\subset \Z C_*(X\times
\square^n) $  be the subcomplex consisting of the degenerate
elements, i.e. that lie in the image of $\sigma_i$ for some $i=1,\dots,n$.
Analogously, we define the complexes
$$NC^{\square}_{r,n}(X)_{\deg}= N C^{\square}_{r,n}(X)\cap \Z
C_r(X\times \square^n)_{deg},$$ and
$$\wZ C^{\square}_{r,n}(X)_{\deg}= \Z
C_r(X\times \square^n)_{deg}/ \Z D_{r}(X\times \square^n)_{deg}$$
of degenerate elements in $NC_{r,n}^{\square}(X)$ and $\wZ C_{r,n}^{\square}(X)$ respectively.

We define
the $2$-iterated chain complexes,
\begin{eqnarray*}
\wZ C^{\widetilde{\square}}_{r,n}(X) &:=&   \wZ C^{\square}_{r,n}(X)/\wZ C^{\square}_{r,n}(X)_{\deg}, \\
N C^{\widetilde{\square}}_{r,n}(X) &:=&   N C^{\square}_{r,n}(X)/ N
C^{\square}_{r,n}(X)_{deg}.
\end{eqnarray*}
Denote by $(\wZ
C^{\widetilde{\square}}_*(X),d_s)$ and $(N
C^{\widetilde{\square}}_*(X),d_s)$ the simple complexes associated
to these 2-iterated chain complexes.

\begin{prop}\label{affine}
If $X$ is a regular noetherian scheme, the natural morphism of complexes
$$N C_*(X)= N C^{\widetilde{\square}}_{*,0}(X) \rightarrow NC^{\widetilde{\square}}_*(X)$$
induces an isomorphism in homology groups with coefficients in
$\Q$.
\end{prop}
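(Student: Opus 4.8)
The plan is to present $NC^{\widetilde{\square}}_*(X)\otimes\Q$ as the total complex of a first–quadrant bicomplex whose rows are acyclic except the $0$-th, which is $NC_*(X)\otimes\Q$, and then read off the statement from the spectral sequence.

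\emph{Step 1 (normalization in the cocubical direction).} For a fixed cube–degree $r$, the groups $n\mapsto NC_r(X\times\square^n)\otimes\Q$ form a cubical abelian group with faces $\delta_i^0,\delta_i^1$ and degeneracies $\sigma_i=(Id\times\sigma^i)^*$. I would first check that the $\sigma_i$ preserve the cube–normalization, so that $NC^{\square}_{r,n}(X)_{\deg}\otimes\Q$ coincides with $\sum_i\sigma_i\bigl(NC_r(X\times\square^{n-1})\otimes\Q\bigr)$, the degenerate subgroup of this cubical abelian group. Applying the decomposition $C_*=N^0C_*\oplus D_*$ recalled in the proof of Proposition \ref{N} in the cocubical variable — which is compatible with the differential $d$ of $NC_*$ because $d$ commutes with every $\delta_i^j$ and $\sigma_i$ — then yields an isomorphism of bicomplexes
$$NC^{\widetilde{\square}}_{*,*}(X)\otimes\Q\ \cong\ N_{*,*}(X),\qquad N_{r,n}(X):=\bigcap_{i=1}^{n}\ker\Bigl(\delta_i^0\colon NC_r(X\times\square^n)\otimes\Q\to NC_r(X\times\square^{n-1})\otimes\Q\Bigr).$$
Under this identification the row $n=0$ is $NC_*(X)\otimes\Q$, it is a subcomplex of the total complex, and the morphism of the statement becomes (after $\otimes\,\Q$) the inclusion of this row.

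\emph{Step 2 (the key acyclicity).} The heart of the argument is the claim that for every regular noetherian scheme $Z$ and every $n\ge 1$ the complex $(N_{*,n}(Z),d)$ is acyclic, which I would prove by induction on $n$. For $n=1$, the identity $\delta_1^0\sigma_1=\mathrm{id}$ gives a splitting of complexes $NC_*(Z\times\square)\otimes\Q=\sigma_1\bigl(NC_*(Z)\otimes\Q\bigr)\oplus N_{*,1}(Z)$; since $\sigma_1$ is pull–back along the projection $Z\times\square\to Z$ and $\square\cong\A^1$, Corollary \ref{cubes1}, the naturality of McCarthy's isomorphism, and Quillen's homotopy invariance for the regular noetherian $Z$ show that $\sigma_1\otimes\Q$ is a quasi–isomorphism; comparing homology in the splitting forces $H_*(N_{*,1}(Z))=0$. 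For the inductive step, set $Y=Z\times\square$ (again regular noetherian), regard the last $n-1$ of the $n$ copies of $\square$ over $Z$ as the cocubical directions over $Y$, and use $\delta_1^0\sigma_1=\mathrm{id}$ together with the cubical identities \eqref{identities3} (which make $\sigma_1$ compatible with $\delta_2^0,\dots,\delta_n^0$) to get a splitting of complexes
$$N_{*,n-1}(Y)=\sigma_1\bigl(N_{*,n-1}(Z)\bigr)\ \oplus\ N_{*,n}(Z).$$
Taking homology and applying the induction hypothesis in degree $n-1$ to both $Y$ and $Z$, the left side and the first summand on the right are acyclic, hence so is $N_{*,n}(Z)$.

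\emph{Step 3 (conclusion).} Filtering the total complex of $N_{*,*}(X)$ by the cocubical degree $n$ gives a spectral sequence that converges since the bicomplex lies in the first quadrant ($N_{r,n}=0$ for $r<0$ or $n<0$), with $E^1_{n,*}=H_*(N_{*,n}(X))$. By Step 2 this vanishes for $n\ge 1$ and equals $H_*(NC_*(X)\otimes\Q)$ for $n=0$, so the spectral sequence degenerates and the inclusion of the bottom row, i.e.\ the morphism of the statement tensored with $\Q$, is a quasi–isomorphism. Because $NC_*(X)$ consists of free abelian groups, homology with $\Q$–coefficients agrees with the homology of $(-)\otimes\Q$, which finishes the proof.

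The only non–formal ingredient is homotopy invariance of rational $K$–theory, entering through Step 2, and this is exactly where regularity of $X$ is used; I expect it to be the main point. I would also stress that one cannot shortcut the argument by working with the unnormalized bicomplex $\bigl(NC_r(X\times\square^n)\otimes\Q\bigr)_{r,n}$ and claiming that the degenerate sub-bicomplex is acyclic: for cubical (as opposed to simplicial) abelian groups the degenerate subcomplex is \emph{not} acyclic in general, so the normalized bicomplex $N_{*,*}$ must be used throughout.
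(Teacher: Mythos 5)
Your proof is correct and takes essentially the same route as the paper: both identify the simple complex of the bicomplex $NC^{\widetilde\square}_{*,*}(X)\otimes\Q$ (via the first-quadrant spectral sequence with the cocubical filtration) and reduce the statement to the vanishing of the rational homology of the columns $n\ge 1$, which is ultimately a consequence of homotopy invariance of rational $K$-theory for the regular noetherian scheme $X\times\square^{m}$. The only difference is a matter of bookkeeping in Step 2. You describe the normalization in the cocubical direction via kernels ($N_{r,n}=\bigcap_i\ker\delta_i^0$) and run the induction on $n$ by replacing the base scheme $Z$ by $Y=Z\times\square$ (also regular noetherian), splitting $N_{*,n-1}(Y)=\sigma_1(N_{*,n-1}(Z))\oplus N_{*,n}(Z)$ and quoting the induction hypothesis for both $Y$ and $Z$. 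The paper instead works with quotients, introducing the intermediate complexes $NC^{\widetilde\square,j}_{*,n}(X)$ obtained by dividing by only the first $j$ degeneracies, and runs a double induction on $(n,j)$ by presenting $NC^{\widetilde\square,j}_{*,n}(X)$ as the cokernel of the monomorphism $\sigma_j\colon NC^{\widetilde\square,j-1}_{*,n-1}(X)\to NC^{\widetilde\square,j-1}_{*,n}(X)$. These are two renderings of the same idea (peel off one degeneracy direction at a time, use $\delta_1^0\sigma_1=\mathrm{id}$ plus homotopy invariance); your version is arguably a bit cleaner because the statement being inducted is uniform in the regular base scheme rather than parametrized by the auxiliary index $j$. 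Your closing warning that the degenerate sub-bicomplex of a cubical abelian group need not be acyclic — so one must pass to the normalized or the quotient complex, not argue directly with the unnormalized bicomplex — correctly identifies the point that makes this extra care necessary.
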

\begin{proof} Consider the first quadrant spectral sequence with $E^0$ term given by
$$E^0_{r,n}= N C^{\widetilde{\square}}_{r,n}(X)\otimes \Q.$$
When it converges, it converges to the homology groups $H_*(N
C^{\widetilde{\square}}_*(X),\Q)$. If we see that for all $n>0$
the rational homology of the complex $N
C^{\widetilde{\square}}_{*,n}(X)$ is zero, the spectral sequence
converges and the proposition is proven.

This is proved by an induction argument. For every $j=1,\dots,n$,
let
$$NC^{\square,j}_{r,n}(X)_{deg}=  \sum_{i=1}^j \sigma_i(N C^{\square}_{*,n-1}(X)) \subseteq
N C^{\square}_{r,n}(X)_{deg}$$  and let $N
C^{\widetilde{\square},j}_{*,n}(X)$ be the respective quotient. We
will show that, for all $n>0$ and $j=1,\dots,n$,
$$H_*(N C^{\widetilde{\square},j}_{*,n}(X),\Q)=0.$$
For $j=1$ and $n>0$,
$$N C^{\widetilde{\square},1}_{*,n}(X)=N C^{\square}_{*,n}(X)/ \sigma_1(N C^{\square}_{*,n-1}(X)).$$
By the homotopy invariance of algebraic $K$-theory of regular
noetherian schemes, the rational homology of this complex is zero.
Then, if $j>1$ and $n>1$, $N C^{\widetilde{\square},j}_{*,n}(X)$
is the cokernel of the monomorphism
 \begin{eqnarray*}
 N C^{\widetilde{\square},j-1}_{*,n-1}(X)  &\xrightarrow{\sigma_j}& N C^{\widetilde{\square},j-1}_{*,n}(X) \\
 E &\mapsto & \sigma_j(E).
\end{eqnarray*}
Since by the induction hypothesis both sides have zero rational homology, so
does the cokernel.
\end{proof}

Observe that in the proof of last proposition, the key point was that for
regular noetherian schemes, the  $K$-groups of $X\times \square^n$ are
isomorphic to the $K$-groups of $X$.  In the case of projective lines, the
situation is slightly trickier, because the $K$-groups of $X\times \P^1$
are not isomorphic to the $K$-groups of $X$. We have to use  the Dold-Thom
isomorphism relating both groups. This implies that we shall also factor out by
the class of the canonical bundle on $(\P^1)^n$.

Let $p_1,\dots,p_n$ be the projections onto the $i$-th coordinate of
$(\P^1)^n$. Consider the invertible sheaf
$\mathcal{O}(1):=\mathcal{O}_{\P^1}(1)$, the dual of the tautological sheaf of
$\P^1$, $\mathcal{O}_{\P^1}(-1)$. We define then the 2-iterated chain complexes
\begin{eqnarray*}
N C^{\P}_{r,n}(X)_{deg} &:=& \sum_{i=1}^n  \sigma_i(N
C^{\P}_{r,n-1}(X)) + p_i^*\mathcal{O}(1) \otimes \sigma_i(N
C^{\P}_{r,n-1}(X)),
 \\
N C^{\widetilde{\P}}_{r,n}(X) &:=&   N C^{\P}_{r,n}(X)/N C^{\P}_{r,n}(X)_{deg}.
\end{eqnarray*}
Denote by $(N C^{\widetilde{\P}}_*(X),d_s)$  the simple complex associated
to this 2-iterated chain complex.

\begin{prop}\label{dold} If $X$ is a regular noetherian scheme,
the natural morphism of complexes
$$N C_*(X)= N C^{\widetilde{\P}}_{*,0}(X) \rightarrow N C^{\widetilde{\P}}_*(X)$$
induces an isomorphism on homology with coefficients in $\Q$.
\end{prop}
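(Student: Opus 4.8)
The plan is to follow the proof of Proposition~\ref{affine} closely, replacing homotopy invariance by the projective bundle formula (the Dold--Thom isomorphism) at the single place where the former is used. As there, I would consider the first quadrant spectral sequence with $E^0_{r,n}=N C^{\widetilde{\P}}_{r,n}(X)\otimes\Q$ and $d^0$ the differential in the cube direction, so that $E^1_{r,n}=H_r(N C^{\widetilde{\P}}_{*,n}(X),\Q)$. If all the columns with $n>0$ are acyclic, the spectral sequence degenerates at $E^1$ and converges to $H_*(N C^{\widetilde{\P}}_*(X),\Q)$, its edge morphism being the map induced by the inclusion of the $n=0$ column $N C_*(X)=N C^{\widetilde{\P}}_{*,0}(X)\hookrightarrow N C^{\widetilde{\P}}_*(X)$; this gives the statement. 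So everything reduces to showing $H_*(N C^{\widetilde{\P}}_{*,n}(X),\Q)=0$ for every $n>0$.

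To prove this vanishing I would run the same double induction as in Proposition~\ref{affine}. For $1\le j\le n$ set
$$N C^{\P,j}_{r,n}(X)_{deg}=\sum_{i=1}^{j}\Bigl(\sigma_i\bigl(N C^{\P}_{r,n-1}(X)\bigr)+p_i^*\mathcal{O}(1)\otimes\sigma_i\bigl(N C^{\P}_{r,n-1}(X)\bigr)\Bigr),$$
let $N C^{\widetilde{\P},j}_{*,n}(X)$ be the quotient of $N C^{\P}_{*,n}(X)$ by it, so that $N C^{\widetilde{\P},n}_{*,n}(X)=N C^{\widetilde{\P}}_{*,n}(X)$. The base case is $j=1$, for arbitrary $n>0$. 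Here $N C^{\widetilde{\P},1}_{*,n}(X)$ is by definition the cokernel of the chain map
$$\Phi\colon N C_*\bigl(X\times(\P^1)^{n-1}\bigr)^{\oplus 2}\longrightarrow N C_*\bigl(X\times(\P^1)^{n}\bigr),\qquad (E,F)\longmapsto \sigma_1(E)+p_1^*\mathcal{O}(1)\otimes\sigma_1(F).$$
This $\Phi$ is a monomorphism: composing with the cocubical face $\delta_1^0$ turns it into $(E,F)\mapsto E+F$, since $\delta_1^0(p_1^*\mathcal{O}(1))\cong\mathcal{O}$; and the images of $\sigma_1$ and of $p_1^*\mathcal{O}(1)\otimes\sigma_1(-)$ are spanned by disjoint families of cubes (a pulled-back cube and its nontrivial twist are never equal), so they meet only in $0$. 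By Corollary~\ref{cubes1} the map induced by $\Phi$ on rational homology is
$$\bigl(K_r(X\times(\P^1)^{n-1})\otimes\Q\bigr)^{\oplus 2}\longrightarrow K_r(X\times(\P^1)^{n})\otimes\Q,\qquad (a,b)\longmapsto p_1^*a+[\mathcal{O}(1)]\cdot p_1^*b,$$
where I use that tensoring a cube by the line bundle $p_1^*\mathcal{O}(1)$ is an exact functor and therefore induces on $K$-theory the multiplication by the class $[p_1^*\mathcal{O}(1)]$. This is an isomorphism by the projective bundle theorem for the $\P^1$-bundle $X\times(\P^1)^n\to X\times(\P^1)^{n-1}$, and the long exact homology sequence attached to $\Phi$ yields $H_*(N C^{\widetilde{\P},1}_{*,n}(X),\Q)=0$.

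For the inductive step $j>1$ I would use, exactly as in Proposition~\ref{affine}, that the (co)cubical identities make $\sigma_j$, $p_j^*\mathcal{O}(1)\otimes\sigma_j(-)$ and the retraction $\delta_j^0$ descend modulo the degeneracies $1,\dots,j-1$; then $N C^{\widetilde{\P},j}_{*,n}(X)$ is the cokernel of the monomorphism
$$N C^{\widetilde{\P},j-1}_{*,n-1}(X)^{\oplus 2}\longrightarrow N C^{\widetilde{\P},j-1}_{*,n}(X),\qquad (E,F)\longmapsto\sigma_j(E)+p_j^*\mathcal{O}(1)\otimes\sigma_j(F),$$
and both its source and its target are rationally acyclic by the induction hypothesis (the cases $(n-1,j-1)$ and $(n,j-1)$), hence so is the cokernel. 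Taking $j=n$ completes the induction and, with it, the proof.

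The step I expect to be delicate is the base case, and within it the precise identification of $\Phi$ with the two summand inclusions of the projective bundle formula at the level of the complex of cubes: one must know that tensoring a cube by a line bundle realizes multiplication by its $K_0$-class on every $K_q$, and one must check that all the relevant combinations of degeneracies remain injective after passing to the normalized complex and to the successive quotients, so that the short exact sequences above are genuine and the long exact sequence arguments apply.
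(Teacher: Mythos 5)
Your proposal follows the same strategy as the paper's proof, which itself is stated to be "analogous to the proof of the last proposition" (Proposition~\ref{affine}): reduce via the spectral sequence of the $2$-iterated complex to the rational acyclicity of the columns $n>0$, then run a double induction on $(n,j)$ where the base case $j=1$ is the Dold--Thom (projective bundle) isomorphism and the inductive step is the long exact sequence of the cokernel of a monomorphism. The extra detail you supply in the base case (injectivity of $\Phi$ and the identification of the induced map on homology via Corollary~\ref{cubes1}) is a welcome filling-in of a step the paper leaves implicit, but it does not change the argument.
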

\begin{proof} The proof is analogous to the proof of the last proposition. By
considering the spectral sequence associated with the homology of a
$2$-iterated complex, we just have to see that for all $j$,
$$H_*(N C^{\widetilde{\P},j}_{*,n}(X),\Q)=0.$$
For $j=1$ and $n>0$, it follows from the Dold-Thom isomorphism on algebraic $K$-theory of
regular noetherian schemes. For $j>1$ and $n>1$, $N
C^{\widetilde{\P},j}_{*,n}(X)$ is the cokernel of the monomorphism
 \begin{eqnarray*}
 N C^{\widetilde{\P},j-1}_{*,n-1}(X) \oplus N \C^{\widetilde{\P},j-1}_{*,n-1}(X)
 &\rightarrow & N C^{\widetilde{\P},j-1}_{*,n}(X) \\
 (E_0,E_1) &\mapsto & \sigma_j(E_0) + p_j^*\mathcal{O}(1)\otimes \sigma_j(E_1).
\end{eqnarray*}
Since by the induction hypothesis both sides have zero rational homology, so
does the cokernel.
\end{proof}

\begin{obs}\label{projectiuquocient}
Let
\begin{eqnarray*}
\Z C^{\P}_{r,n}(X)_{deg} & = & \sum_{i=1}^n  \sigma_i(\Z
C^{\P}_{r,n-1}(X)) + p_i^*\mathcal{O}(1) \otimes \sigma_i(\Z
C^{\P}_{r,n-1}(X)), \\
\wZ C^{\P}_{r,n}(X)_{\deg} & = & \Z C^{\P}_{r,n}(X)_{deg}/ \Z
D_{r}(X\times (\P^1)^n)_{deg},
\end{eqnarray*} and
let
$$\wZ C^{\widetilde{\P}}_{r,n}(X) :=   \wZ C^{\P}_{r,n}(X)/\wZ C^{\P}_{r,n}(X)_{\deg}.$$
Denote by $(\wZ C^{\widetilde{\P}}_*(X),d_s)$ the simple complex
associated to this 2-iterated chain complex.

It follows from the definitions that there is an isomorphism
$$\wZ C^{\widetilde{\P}}_*(X) \cong N C^{\widetilde{\P}}_*(X). $$
\end{obs}

\subsection{The transgression of cubes by affine and projective lines}\label{transgressionsection}
Let $x$ and $y$ be the global sections of $\mathcal{O}(1)$ given
by the projective coordinates $(x:y)$ on $\P^1$. Let $X$ be a
scheme and let  $p_0$ and $p_1$ be the projections from $X\times
\P^1$ to $X$ and $\P^1$ respectively. Then, for every locally free
sheaf $E$ on $X$, we denote
$$E(k):=p_0^* E\otimes p_1^*\mathcal{O}(k).$$

The following definition is taken from \cite{Burgos1}.
\begin{df}\label{definiciotransgressio} Let
$$E: 0\rightarrow E^0 \xrightarrow{f^0} E^1 \xrightarrow{f^1} E^2 \rightarrow 0$$
be a short exact sequence. The \emph{first transgression by
projective lines} of $E$, $\tr_1(E)$, is the kernel of the
morphism
\begin{eqnarray*}
E^1(1) \oplus E^2(1) &\rightarrow & E^2(2) \\ (a,b) &\mapsto &
f^1(a)\otimes x-b\otimes y.
\end{eqnarray*}
\end{df}
Observe that this locally free sheaf on $X\times \P^1$ satisfies that
\begin{eqnarray*}
\delta_1^0\tr\nolimits_1(E)& = & \tr\nolimits_1(E)|_{x=0} =  E^1, \\
\delta_1^1\tr\nolimits_1(E)&=& \tr\nolimits_1(E)|_{y=0} = \im f^0
\oplus E^2.
\end{eqnarray*}

By restriction to $\square$, we obtain the \emph{transgression by affine
lines}.

From now on, we restrict ourselves to the affine case. However,
all the results can be written in terms of the complexes with
projective lines.

Let $E$ be an $n$-cube. We define  the \emph{first transgression}
of $E$ as the $(n-1)$-cube on $X\times \square^1$ given by
$$ \tr\nolimits_1(E)^{\bj}:=
\tr\nolimits_1(\partial_{2,\dots,n}^{\bj}E),\qquad \textrm{for all } \bj\in
 \{0,1,2\}^{n-1},$$
i.e. we take the transgression of the exact sequences in the first
direction. Since $\tr\nolimits_1$ is a functorial exact
construction, the \emph{$m$-th transgression sheaf}
can be defined recursively as
$$\tr\nolimits_m(E)=\tr\nolimits_1\tr\nolimits_{m-1}(E)=
\tr\nolimits_1\stackrel{m}{\dots}\tr\nolimits_1(E).$$ It is an
$(n-m)$-cube on $X\times \square^{m}$. In particular, $\tr_n(E)$
is a locally free sheaf on $X\times \square^n$.

Observe that the transgression is functorial, i.e., if
$E\xrightarrow{\psi} F$ is a morphism of $n$-cubes, then there is
an induced morphism
$$ \tr\nolimits_m(E) \xrightarrow{\tr\nolimits_{m}(\psi)}
\tr\nolimits_m(F),$$ for every $m=1,\dots,n$. In particular, for
every $n$-cube $E$ and $i=1,\dots,n$, the morphism of
$(n-1)$-cubes
$$\partial_i^0 E\xrightarrow{f_i^0} \partial_i^1 E,$$
induces a morphism
$$ \tr\nolimits_m(\partial_i^0 E)\xrightarrow{\tr\nolimits_m(f_i^0)} \tr\nolimits_m(\partial_i^1 E),$$
for $m=1,\dots,n-1$.

\begin{lema}\label{trans}
For every $n$-cube $E$ and $i=1,\dots,n$, the following identities
hold:
\begin{eqnarray}
\delta_i^0\tr\nolimits_n(E)&=&\tr\nolimits_{n-1}(\partial_i^1E), \\
\delta_i^1\tr\nolimits_n(E)&\cong& \im \tr\nolimits_{n-1}(f_i^0)
\oplus\tr\nolimits_{n-1}(\partial_i^2E),\label{iso}
\end{eqnarray}
where the isomorphism is canonical, i.e. a combination of
commutativity and associativity isomorphisms for direct sums,
distributivity isomorphism for the tensor product of a direct sum and commutativity isomorphisms of the pull-back of
a direct sum with the direct sum of the pull-back.
\end{lema}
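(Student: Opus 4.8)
The plan is to argue by induction on $n$. For $n=1$ both identities are, via the conventions $\tr_0=\mathrm{id}$ and $f_1^0=f^0$, exactly the two equalities recorded immediately after Definition \ref{definiciotransgressio}: namely $\delta_1^0\tr_1(E)=E^1=\partial_1^1E$ and $\delta_1^1\tr_1(E)=\im f^0\oplus E^2=\im\tr_0(f_1^0)\oplus\tr_0(\partial_1^2E)$. So the base case requires no work.

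For the inductive step I would first record three elementary properties of the first transgression, valid over an arbitrary base scheme. \emph{(a) Base change.} Since $f^1$ is an epimorphism, the morphism $E^1(1)\oplus E^2(1)\to E^2(2)$ of Definition \ref{definiciotransgressio} is surjective, so $0\to\tr_1(E)\to E^1(1)\oplus E^2(1)\to E^2(2)\to 0$ is a short exact sequence of locally free sheaves; being such it is Zariski-locally split, hence preserved by every pull-back, so that $\tr_1$, and therefore every $\tr_m$, commutes with arbitrary base change in $X$. \emph{(b) Compatibility with the non-transgressed faces.} From the defining formula $\tr_1(E)^{\bm}=\tr_1(\partial_{2,\dots,n}^{\bm}E)$ and the commutation of faces in different directions one obtains, for $2\le i\le n$ and $j\in\{0,1,2\}$, the identity of $(n-2)$-cubes over $X\times\square$
\[
\partial_{i-1}^{\,j}\,\tr_1(E)=\tr_1(\partial_i^{\,j}E),
\]
and, by functoriality of $\tr_1$, the structure morphism $\partial_{i-1}^0\tr_1(E)\to\partial_{i-1}^1\tr_1(E)$ of the cube $\tr_1(E)$ is $\tr_1(f_i^0)$. \emph{(c) Additivity.} Distributivity of $\otimes\,\mathcal{O}(1)$ over direct sums, together with the fact that the kernel of a block-diagonal map is the direct sum of the kernels, yields a canonical isomorphism $\tr_1(E\oplus F)\cong\tr_1(E)\oplus\tr_1(F)$ built only from commutativity and associativity of $\oplus$ and distributivity of $\otimes$ over $\oplus$; the same then holds for every $\tr_m$.

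With these in hand I would write $\tr_n(E)=\tr_{n-1}(F)$, where $F:=\tr_1(E)$ is regarded as an $(n-1)$-cube over the base $X\times\square$, the $n-1$ further transgressions producing the copies of $\square$ in positions $2,\dots,n$ of $X\times\square^n$ while position $1$ is the one produced by $\tr_1$. If $i\ge 2$, the faces $\delta_i^0,\delta_i^1$ act on a copy of $\square$ produced by $\tr_{n-1}$, so I would apply the induction hypothesis to the $(n-1)$-cube $F$ over the base $X\times\square$ (with the index shift $\delta_i\leftrightarrow\delta_{i-1}$) and then substitute using \emph{(b)}:
\[
\delta_i^0\tr_n(E)=\tr_{n-2}\!\big(\partial_{i-1}^1F\big)=\tr_{n-2}\!\big(\tr_1(\partial_i^1E)\big)=\tr_{n-1}(\partial_i^1E),
\]
and likewise $\delta_i^1\tr_n(E)\cong\im\tr_{n-2}(\tr_1(f_i^0))\oplus\tr_{n-2}(\tr_1(\partial_i^2E))=\im\tr_{n-1}(f_i^0)\oplus\tr_{n-1}(\partial_i^2E)$. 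If $i=1$, the face $\delta_1^0$ (resp.\ $\delta_1^1$) is the base change that sets the first $\square$-coordinate equal to $0$ (resp.\ $\infty$) and leaves the remaining copies of $\square$ untouched; by \emph{(a)} it commutes with $\tr_{n-1}$, so using the base case applied to $\tr_1(E)$,
\[
\delta_1^0\tr_n(E)=\tr_{n-1}\!\big(\delta_1^0\tr_1(E)\big)=\tr_{n-1}(\partial_1^1E),
\]
and $\delta_1^1\tr_n(E)=\tr_{n-1}(\delta_1^1\tr_1(E))=\tr_{n-1}(\im f_1^0\oplus\partial_1^2E)\cong\im\tr_{n-1}(f_1^0)\oplus\tr_{n-1}(\partial_1^2E)$, the last isomorphism coming from \emph{(a)} and \emph{(c)}.

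I expect the genuinely delicate part to be bookkeeping rather than mathematics: keeping track of the relabelling of the $\square$-coordinates when $\tr_1$ is split off (positions $1,\dots,n-1$ relative to $X\times\square$ become positions $2,\dots,n$ relative to $X$), and verifying for the second identity that the isomorphism produced is assembled solely from commutativity and associativity of $\oplus$, distributivity of $\otimes$ over $\oplus$, and the commutation of pull-back with $\oplus$ — which is precisely what properties \emph{(a)} and \emph{(c)} and the base case supply. Once that tracking is carried out carefully, the two identities follow.
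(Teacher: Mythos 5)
Your proof is correct and in substance the same as the paper's: both reduce to the $n=1$ base case together with additivity of $\tr_1$ (your property (c)) and compatibility of $\tr_1$ with base change and with faces in untransgressed directions (your (a) and (b)). The paper simply decomposes $\tr_n=\tr_1^{\,n-i}\circ\tr_1\circ\tr_1^{\,i-1}$ directly at the stage $i$ where $\delta_i$ acts, writing $\delta_i^0\tr_n(E)=\tr_{n-i}\bigl(\partial_1^1\tr_{i-1}(E)\bigr)=\tr_{n-1}(\partial_i^1E)$ in one pass, whereas you peel $\tr_1$ off the front and package the same commutations as a formal induction on $n$ with a case split at $i=1$ versus $i\ge 2$; these are two presentations of the same argument, and both (like the paper) tacitly use the exactness of $\tr$ to identify $\tr_{n-1}(\im f_i^0)$ with $\im\tr_{n-1}(f_i^0)$.
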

\begin{proof}
The proof is straightforward.  For $n=1$ it follows from the definition.
Therefore,
\begin{eqnarray*}
\delta_i^0\tr\nolimits_n(E) &=&
\delta_i^0\tr\nolimits_1\stackrel{n}{\dots}\tr\nolimits_1(E)  \\ &=&
\tr\nolimits_{n-i}\partial_i^1\tr\nolimits_{i-1}(E)
=\tr\nolimits_{n-1}(\partial_i^1E).
\end{eqnarray*}
For the second statement, observe first of all that there is a canonical
isomorphism $\tr\nolimits_n(A\oplus B)\cong \tr\nolimits_n(A)\oplus
\tr\nolimits_n(B)$. It follows by recurrence  from the case $n=1$. So, let
$E,F$ be two short exact sequences. Then, $\tr\nolimits_1(E\oplus F)$ is the
kernel of the map
$$(E_1 \oplus F_1)(1) \oplus (E_2\oplus F_2)(1) \rightarrow (E_2\oplus F_2)(2), $$
while $\tr\nolimits_1(E)\oplus \tr\nolimits_1(F)$ is the direct sum of the
kernels of the maps
$$E_1(1) \oplus E_2(1) \rightarrow E_2(2),\quad  F_1(1) \oplus F_2(1) \rightarrow F_2(2).  $$
Hence, there is clearly a canonical isomorphism.  Therefore
\begin{eqnarray*}
\delta_i^1\tr\nolimits_n(E)&=&
\delta_i^1\tr\nolimits_1\stackrel{n}{\dots}\tr\nolimits_1(E)  \\ &
= & \tr\nolimits_{n-i}(\im\tr\nolimits_{i-1}(f_i^0)\oplus
\tr\nolimits_{i-1}(\partial_i^2E)) \\ &\cong &\im
\tr\nolimits_{n-1}(f_i^0)\oplus \tr\nolimits_{n-1}(\partial_i^2E).
\end{eqnarray*}
\end{proof}
\begin{obs}
Since the isomorphisms in \eqref{iso} are  canonical,  if
$E\xrightarrow{\psi} F$ is a morphism of $n$-cubes,  for every $i$  we obtain a
commutative diagram
$$\xymatrix{
\delta_i^1\tr\nolimits_{n}(E)
\ar[r]^{\delta_i^1\tr\nolimits_n(\psi)} \ar[d]_{\cong} &
\delta_i^1\tr\nolimits_{n}(F) \ar[d]^{\cong}
\\ \im \tr\nolimits_{n-1}(f_i^0)
\oplus\tr\nolimits_{n-1}(\partial_i^2E) \ar[r]  & \im
\tr\nolimits_{n-1}(f_i^0) \oplus\tr\nolimits_{n-1}(\partial_i^2F).
 }$$
\end{obs}

Observe that for every $n$-cube $E$, the $(n-m)$-cube
$\tr\nolimits_m(E)$ is obtained applying $\tr_1$ on the directions
$1,\dots,m$. In the next definition, we generalize this
construction by specifying the directions in which we apply the first
transgression.

\index{transgression!sheaf}
\begin{df}
Let $\bi=(i_1,\dots,i_{n-m})$, with $1\leq i_1<\dots < i_{n-m}\leq n$. We
define, $\tr\nolimits_{m}^{\bi}(E)\in C^{\square}_{n-m,m}(X)$, by
$$\tr\nolimits_{m}^{\bi}(E)^{\bj} =
 \tr\nolimits_{m}(\partial_{\bi}^{\bj}E),\qquad \textrm{for all }
\bj\in \{0,1,2\}^{n-m}.$$
\end{df}
In other words, we consider the first transgression
iteratively in the directions not in the multi-index $\bi$, from
the highest  to the lowest index.

By convention, the transgressions without super-index correspond
to the multi-index $\bi=(m+1,\dots,n)$. The following lemma is a
direct
consequence of  lemma \ref{trans}.

\begin{lema}\label{trans3}
Let $\bi=(i_1,\dots,i_{n-m})$ with $1\leq i_1<\dots < i_{n-m}\leq n$. Then, for
every fixed $r=1,\dots,m$, let $\bi'=\bi-\bun_{r+1}^{n-m}$ and let
$(v_1,\dots,v_m)$ be the ordered multi-index with the entries in
$\{1,\dots,n\}\setminus \{i_1,\dots,i_{n-m}\}$. Then,
\begin{eqnarray*}
\delta_r^0\tr\nolimits^{\bi}_m(E) &=&\tr\nolimits_{m-1}^{\bi'}
(\partial_{v_r}^1E), \\
\delta_r^1\tr\nolimits_m^{\bi}(E) &\cong& \im
\tr\nolimits_{m-1}^{\bi'}(f_{v_r}^0)
\oplus\tr\nolimits_{m-1}^{\bi'}(\partial_{v_r}^2E) \qquad
\te{(canonically)}.
\end{eqnarray*}
\end{lema}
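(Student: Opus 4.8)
The plan is to reduce each of the two claimed identities to a componentwise statement and then to invoke Lemma~\ref{trans} one direction at a time. Both sides are $(n-m)$-cubes of locally free sheaves on $X\times\square^{m-1}$, indexed by $\{0,1,2\}^{n-m}$; since $\delta_r^j=(\mathrm{id}\times\delta_j^r)^*$ is a pull-back functor it commutes with kernels, with direct sums, and with the structure morphisms that assemble a family of sheaves into a cube. Hence for every $\bj\in\{0,1,2\}^{n-m}$ one has $\bigl(\delta_r^j\tr\nolimits_m^{\bi}(E)\bigr)^{\bj}=\delta_r^j\bigl(\tr\nolimits_m^{\bi}(E)^{\bj}\bigr)=\delta_r^j\bigl(\tr\nolimits_m(\partial_{\bi}^{\bj}E)\bigr)$, and it is enough to understand $\delta_r^j$ applied to the plain transgression $\tr\nolimits_m$ of the $m$-cube $\partial_{\bi}^{\bj}E$.

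The key preliminary is the bookkeeping identifying the $r$-th $\square$-coordinate of $\tr\nolimits_m(\partial_{\bi}^{\bj}E)$ with an original direction of $E$. The $m$-cube $\partial_{\bi}^{\bj}E$ is obtained from $E$ by taking faces in the directions $i_1<\dots<i_{n-m}$, so its directions $1,\dots,m$ are exactly the original directions $v_1<\dots<v_m$; by the definition $\tr\nolimits_m=\tr\nolimits_1\stackrel{m}{\dots}\tr\nolimits_1$ the $r$-th $\square$-factor is the one produced by transgressing the $r$-th direction of $\partial_{\bi}^{\bj}E$, that is, the original direction $v_r$. Applying Lemma~\ref{trans} to the $m$-cube $\partial_{\bi}^{\bj}E$ in its $r$-th direction then gives
\begin{align*}
\delta_r^0\tr\nolimits_m(\partial_{\bi}^{\bj}E)&=\tr\nolimits_{m-1}\bigl(\partial_r^1(\partial_{\bi}^{\bj}E)\bigr),\\
\delta_r^1\tr\nolimits_m(\partial_{\bi}^{\bj}E)&\cong\im\tr\nolimits_{m-1}(g^0)\oplus\tr\nolimits_{m-1}\bigl(\partial_r^2(\partial_{\bi}^{\bj}E)\bigr),
\end{align*}
where $g^0$ is the structure morphism $\partial_r^0(\partial_{\bi}^{\bj}E)\to\partial_r^1(\partial_{\bi}^{\bj}E)$, i.e.\ $\partial_{\bi}^{\bj}(f_{v_r}^0)$.

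Finally I would translate these faces of $\partial_{\bi}^{\bj}E$ back into faces of $E$. Repeated application of the commutation rules~\eqref{identities3} for faces rewrites $\partial_r^\varepsilon(\partial_{\bi}^{\bj}E)=\partial_{\bi'}^{\bj}(\partial_{v_r}^\varepsilon E)$, where $\bi'$ is obtained from $\bi$ by lowering by one each entry that exceeds $v_r$; a short count of how many of the $i_s$ lie below $v_r$ pins down $\bi'$ as the multi-index in the statement. Substituting this and recognising the right-hand sides above as the $\bj$-components of $\tr\nolimits_{m-1}^{\bi'}(\partial_{v_r}^1E)$, respectively of $\im\tr\nolimits_{m-1}^{\bi'}(f_{v_r}^0)\oplus\tr\nolimits_{m-1}^{\bi'}(\partial_{v_r}^2E)$, establishes the two identities componentwise. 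To promote the second one to an isomorphism of $(n-m)$-cubes I would appeal to the remark following Lemma~\ref{trans}: the isomorphisms~\eqref{iso} are canonical and natural in $E$, hence compatible with the face morphisms of $\tr\nolimits_m^{\bi}(E)$, so they glue to the asserted cube isomorphism. The only genuinely delicate point is this index bookkeeping — the correspondence between $\square$-coordinates and original directions, and the induced shift $\bi\mapsto\bi'$; everything else is a formal consequence of Lemma~\ref{trans}.
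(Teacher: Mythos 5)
Your overall strategy is the right one and is precisely what the paper's terse ``direct consequence of Lemma~\ref{trans}'' is asking the reader to supply: reduce to a componentwise statement using that $\delta_r^j$ is a pull-back functor, apply Lemma~\ref{trans} to the $m$-cube $\partial_{\bi}^{\bj}E$ in its $r$-th direction, translate the resulting face of $\partial_{\bi}^{\bj}E$ back into a face of $E$ via the commutation rules~\eqref{identities3}, and then invoke naturality of the isomorphism of Lemma~\ref{trans} to assemble the second identity into an isomorphism of cubes. Up to this point everything is sound.

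The gap is the sentence ``a short count of how many of the $i_s$ lie below $v_r$ pins down $\bi'$ as the multi-index in the statement.'' You do not carry out the count, and if you do, it does not yield the stated formula. Your own description of $\bi'$ --- lower by one each entry of $\bi$ that exceeds $v_r$ --- is correct. But among $\{1,\dots,v_r-1\}$ there are exactly $r-1$ of the complementary indices $v_1,\dots,v_{r-1}$, hence exactly $v_r-r$ of the $i_s$; so the entries that get lowered are $i_{v_r-r+1},\dots,i_{n-m}$, giving $\bi'=\bi-\bun_{v_r-r+1}^{n-m}$ rather than $\bi-\bun_{r+1}^{n-m}$. These agree only when $v_r=2r$. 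A sanity check in the ``default'' case $\bi=(m+1,\dots,n)$ (where $v_r=r$) makes the mismatch visible: the correct $\bi'$ is $(m,\dots,n-1)$, whereas $\bi-\bun_{r+1}^{n-m}$ is not even strictly increasing. One can also cross-check against the paper's own use of this fact in the proof of Proposition~\ref{trans2}, where the analogous shift is written $\bun_{l+1}^{n-m-1}$ with $l=\max\{k:\ r+k-1\geq i_k\}$; unwinding this condition gives $l=v_r-r$, again not $r$. So either the formula in the statement of Lemma~\ref{trans3} has a misprint (for which all the evidence points), or there is a convention both of us are missing; in either case, asserting that the count confirms the printed formula is exactly the delicate point you flagged, and it is the one place the argument cannot be waved through. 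The proof should carry out the count explicitly and state the resulting $\bi'$, rather than deferring to the statement.
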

\vist

\subsection{Cubes with canonical kernels}\label{canonicalkernels}
We introduce here a new subcomplex of $\Z C_*(X)$, consisting of
the \emph{cubes with canonical kernels}. In this new class of
cubes, the transgressions behave almost like a chain morphism.
Namely, if $E$ is an $n$-cube with canonical kernels, we will have
\begin{eqnarray}\label{trivial}
\delta_i^0\tr\nolimits_n(E) &=&\tr\nolimits_{n-1}(\partial_i^1E),\nonumber \\
\delta_i^1\tr\nolimits_n(E) &\cong&
\tr\nolimits_{n-1}(\partial_i^0E)
\oplus\tr\nolimits_{n-1}(\partial_i^2E).
\end{eqnarray}

\begin{df}\label{kernels}
Let $E$ be an $n$-cube. We say that $E$ has \emph{canonical
kernels} if for every   $i=1,\dots,n$ and $\bj\in
\{0,1,2\}^{n-1}$, there is an inclusion
$(\partial_i^0E)^{\bj}\subset (\partial_i^1E)^{\bj}$ of sets and
moreover
 the morphism $$f_i^0:\partial_i^0E\rightarrow \partial_i^1E$$
is the canonical inclusion of cubes.

\end{df}

Let $KC_n(X)\subseteq C_n(X)$ be the subset of all cubes with
canonical kernels. The differential of $\Z C_*(X)$ induces a
differential on $\Z KC_*(X)$ making the inclusion arrow a chain
morphism.

Let $E$ be a $1$-cube i.e. a short exact sequence $E^0
\xrightarrow{f^0} E^1 \xrightarrow{f^1} E^2$. Then, we define
\begin{eqnarray*}
\lambda_1^0(E) &:& 0\rightarrow 0 \rightarrow E^0
\xrightarrow{f^0}
\im f^0 \rightarrow 0, \\
 \lambda_1^1(E) &:& 0\rightarrow \ker f^1 \rightarrow E^1
\xrightarrow{f^1} E^2 \rightarrow 0.
\end{eqnarray*}
Both of them are $1$-cubes with canonical kernels. Then, we define
$$\lambda_1(E)=\lambda_{1}^1(E)-\lambda_1^0(E)\in \Z KC_1(X).$$

For an arbitrary $n$-cube $E\in C_n(X)$ and for every
$i=1,\dots,n$, let $\lambda_i^{0}(E)$ and $\lambda_i^{1}(E)$
 be the $n$-cubes which along the $i$-th direction are:
$$\begin{array}{rclcrcl}
\partial_i^0\lambda_i^{0}(E) &=& 0 & \qquad & \partial_i^0\lambda_i^{1}(E)
&=& \im f_i^0
\\
\partial_i^1\lambda_i^{0}(E) &=& \partial_i^0 E & \qquad & \partial_i^1\lambda_i^{1}(E)
&=& \partial_i^1 E  \\
\partial_i^2\lambda_i^{0}(E) &=& \im f_i^0 & \qquad & \partial_i^2\lambda_i^{1}(E) &=&
\partial_i^2E.
\end{array}$$
Then, we define
\begin{eqnarray*}
\lambda_i(E) & =& -\lambda_i^{0}(E)+\lambda_i^{1}(E),\quad i=1,\dots,n, \\
\lambda(E) & = & \left\{\begin{array}{ll} \lambda_n \cdots
\lambda_1(E),&\textrm{if }n>0, \\ E & \textrm{if }n=0.
\end{array}\right.
\end{eqnarray*}

\begin{prop}\label{deg1} The map
$$\lambda: \Z C_n(X) \rightarrow \Z KC_n(X)$$ is a morphism of
complexes.
\end{prop}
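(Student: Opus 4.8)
The plan is to verify that $\lambda$ commutes with the differential $d = \sum_{i,j}(-1)^{i+j}\partial_i^j$ on $\Z C_*(X)$, i.e. that $d\,\lambda(E) = \lambda(dE)$ for every $n$-cube $E$. Since $\lambda = \lambda_n\cdots\lambda_1$, the natural strategy is to reduce to a statement about each elementary operation $\lambda_i$, and to proceed by induction on $n$. The base case $n=1$ is a direct check: one computes the faces of $\lambda_1^0(E)$ and $\lambda_1^1(E)$ (which are $1$-cubes, hence $0$-cubes after taking faces, i.e. locally free sheaves), and verifies that $d\lambda_1(E) = -\partial_1^0 E + \partial_1^1 E - \partial_1^2 E = dE$ using $\partial_i^1 E = \partial_i^0 E + \partial_i^2 E$ in $K_0$... but here we work at the level of the complex, so one checks $d(\lambda_1^1(E) - \lambda_1^0(E))$ equals the alternating sum of faces of $E$ term by term: $\partial_1^0\lambda_1^1(E) - \partial_1^0\lambda_1^0(E) = \ker f^1 - 0$, etc., and the alternating sum telescopes correctly. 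Actually the cleanest formulation: each $\lambda_i(E)$ is built from two $n$-cubes via short exact sequences along direction $i$, so $d\lambda_i(E)$ splits into a ``direction-$i$ part'' and a ``other-directions part''.

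\textbf{Key steps.} First I would establish the commutation rule for a single $\lambda_i$, namely that for any $n$-cube $E$ one has, for $r \neq i$,
\begin{equation*}
\partial_r^s \lambda_i(E) = \lambda_{i'}(\partial_r^s E),
\end{equation*}
where $i' = i$ or $i-1$ depending on whether $r > i$ or $r < i$, together with the analysis of the direction-$i$ faces: $\partial_i^0\lambda_i(E) = -0 + \im f_i^0$, wait — more carefully, $\partial_i^0\lambda_i(E) = \im f_i^0 - 0$, $\partial_i^1\lambda_i(E) = \partial_i^1 E - \partial_i^0 E$, $\partial_i^2\lambda_i(E) = \partial_i^2 E - \im f_i^0$. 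These are chosen precisely so that the direction-$i$ contribution to $d\lambda_i(E)$ collapses: $(+1)\partial_i^0 - (1)\partial_i^1$ wait the signs are $(-1)^{i+j}$; in any case the three terms $\partial_i^0\lambda_i(E), \partial_i^1\lambda_i(E), \partial_i^2\lambda_i(E)$ with alternating signs sum to $\pm(\partial_i^0 E - \partial_i^1 E + \partial_i^2 E)$, up to the $\im f_i^0$ terms cancelling in pairs. Then I would feed this into an induction on $n$: assuming $\lambda_{n-1}\cdots\lambda_1$ commutes with $d$ on $(n-1)$-cubes (and, what we really need, on $n$-cubes in all directions $\geq 2$ since $\lambda_1$ is applied first), one computes $d\lambda(E) = d\lambda_n\cdots\lambda_1(E)$ by peeling off $\lambda_n$, using the single-operation rule to move $\lambda_n$ past the differential in directions $1,\dots,n-1$, and using the direction-$n$ collapse for the direction-$n$ faces. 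A bookkeeping point: because each $\lambda_i$ involves $\im f_i^0$ appearing in two faces with opposite roles, the cross-terms cancel, and one must check that the $\lambda_i$ and $\lambda_j$ ($i \neq j$) interact consistently, i.e. that $\lambda_i$ of a cube with canonical kernels in direction $j$ still has canonical kernels in direction $j$ — this is where the ``$\im f_i^0$'' construction is essential, since the inclusions are preserved under taking these subquotient cubes.

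\textbf{Main obstacle.} The hard part will be the combinatorial bookkeeping in the inductive step: keeping track of how the index of $\lambda$ shifts when commuting past $\partial_r^s$ (the $i \leftrightarrow i-1$ shift according to $r \lessgtr i$, matching the identities \eqref{identities3} for faces of cubes), and verifying that all the ``$\im f_i^0$'' correction terms cancel in pairs across the different $\partial_i^0$ and $\partial_i^2$ contributions. I would organize this by first proving the clean identity $d\lambda_i = \lambda_i d$ does \emph{not} quite hold on the nose — rather $d\lambda_i(E)$ equals $\lambda_i(dE)$ \emph{plus} the direction-$i$ boundary terms which are designed to vanish — and then composing. A secondary subtlety is that $\lambda$ is defined as $\lambda_n\cdots\lambda_1$ (a specific order), so one must check the argument is insensitive to this choice, or simply carry the fixed order through; I expect the fixed-order computation to go through directly once the single-operation lemma is in hand, so the real work is isolating and proving that lemma together with its compatibility with faces in the other directions.
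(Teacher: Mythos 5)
Your proposal follows essentially the same strategy as the paper. The paper's proof consists of two observations: first, that $\lambda(E)$ lands in $\Z KC_n(X)$; second, a short list of equalities about how $\partial_i^j$ interacts with the composite $\lambda_n\cdots\lambda_i^{\epsilon}\cdots\lambda_1$, from which $d\lambda = \lambda d$ drops out by telescoping. Your single-operation commutation rule $\partial_r^s\lambda_i(E) = \lambda_{i'}(\partial_r^s E)$ for $r\neq i$, together with the direction-$i$ face computation ($\partial_i^0\lambda_i(E)=\im f_i^0$, $\partial_i^1\lambda_i(E)=\partial_i^1 E-\partial_i^0 E$, $\partial_i^2\lambda_i(E)=\partial_i^2 E-\im f_i^0$, whose alternating sum telescopes to $\partial_i^0 E - \partial_i^1 E + \partial_i^2 E$) are precisely the ingredients underlying those equalities, so your version is if anything more explicit about the mechanism. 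One remark on framing: what you call an ``induction on $n$'' is really a direct computation — you commute $\partial_r^j$ through each $\lambda_m$ with $m\neq r$ using the single-operation rule, land on $\lambda_r$, telescope, and pull $\lambda_{n-1}\cdots\lambda_1$ back out around $\partial_r^j E$; no inductive hypothesis is genuinely invoked, and setting it up as an induction on the dimension is awkward since the intermediate composites $\lambda_m\cdots\lambda_1(E)$ are still $n$-cubes.

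The one genuine omission is the codomain claim. The proposition asserts that $\lambda$ maps into $\Z KC_n(X)$, so you must prove that the cubes in $\lambda(E)$ have canonical kernels in \emph{all} $n$ directions simultaneously, not merely in the last direction $\lambda_i$ was applied to. You gesture at this (``$\lambda_i$ of a cube with canonical kernels in direction $j$ still has canonical kernels in direction $j$'') but never say why it holds. The missing fact — and the one the paper records at the outset — is that for any commutative square of epimorphisms
$\begin{smallmatrix} A & \to & B \\ \downarrow & & \downarrow \\ C & \to & D \end{smallmatrix}$
with horizontal maps $f,j$ and vertical maps $g,h$, the iterated kernels coincide as subsets: $\ker(\ker g\to\ker h)=\ker(\ker f\to\ker j)$. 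This set-theoretic commutation is exactly what guarantees that applying $\lambda_i$ (which builds $\im f_i^0$ as a pointwise kernel along direction $i$) does not disturb the inclusions along the other directions. Without stating and using this, the assertion that $\lambda$ lands in $\Z KC_n(X)$ rather than merely $\Z C_n(X)$ is unjustified, so you should make it explicit before the chain-map computation.
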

\begin{proof}
First of all, observe that the image by $\lambda$ of any $n$-cube
$E$, is a sum of $n$-cubes with canonical kernels. It is a
consequence of the fact that for any commutative square of
epimorphisms,
$$\xymatrix{ A \ar[r]^{f} \ar[d]_{g} & B \ar[d]^h \\ C \ar[r]_{j} & D,}$$
the set equality $\ker(\ker g \rightarrow \ker h)= \ker(\ker f
\rightarrow \ker j)$ holds. The equality $d\lambda(E)=\lambda
d(E)$ follows from the equalities
\begin{eqnarray*}
\partial_i^j\lambda(E)&=&\partial_i^j(\lambda_n\cdots \lambda_i^1
\cdots \lambda_1 (E))- \partial_i^j(\lambda_n\cdots \lambda_i^0
\cdots \lambda_1 (E)), \\
\partial_i^0(\lambda_n\cdots \lambda_i^1
\cdots \lambda_1 (E)) &=& \partial_i^2(\lambda_n\cdots \lambda_i^0
\cdots \lambda_1 (E)), \\
\partial_i^j(\lambda_n\cdots \lambda_i^1
\cdots \lambda_1 (E)) & =& \lambda_{n-1}\cdots \lambda_1 (\partial_i^jE),\quad j=1,2, \\
\partial_i^1(\lambda_n\cdots \lambda_i^0
\cdots \lambda_1 (E)) & =& \lambda_{n-1}\cdots \lambda_1 (\partial_i^0E).
\end{eqnarray*}
\end{proof}

\subsection{The transgression morphism}\label{transgressionmorphism}

Observe that the face maps $\delta_i^j$ of $\square^{\cdot}$ (as defined in section \ref{projectiucocubical})
induce morphisms on the complex of split cubes
$$\delta_i^j:\Z \Sp\nolimits_{r}(X\times\square^{n})\rightarrow \Z \Sp\nolimits_{r}(X\times\square^{n-1}).$$
 Let $\Z
\Sp^{\square}_{*,*}(X)$ be the 2-iterated chain complex given by
$$\Z \Sp\nolimits^{\square}_{r,n}(X) = \Z \Sp\nolimits_{r}(X\times\square^{n}),$$ and differentials
\begin{eqnarray*}
d &=& d_{\Sp\nolimits_*(X\times\square^{n})}, \\
\delta &=& \sum (-1)^{i+j}\delta^j_i.
\end{eqnarray*}
Let $(\Z \Sp^{\square}_*(X),d_s)$ be the associated simple
complex. Using the transgressions, we define here a morphism of
complexes
$$\Z KC_*(X) \xrightarrow{T} \Z \Sp\nolimits^{\square}_{*}(X)$$ which composed with  $\lambda$
gives the \emph{transgression morphism}
$$\Z C_*(X) \xrightarrow{T} \Z \Sp\nolimits^{\square}_{*}(X).$$
For every $n$-cube $E$ with canonical kernels, the component of
$T(E)$ in $\Z \Spn_{0,n}^{\square}(X)$ is exactly
$(-1)^n\trn_n(E)$. However, the assignment
$$E\mapsto (-1)^n\tr\nolimits_n(E)$$
is not a chain morphism. The failure comes from equality
\eqref{trivial}, since, first of all, the equality holds only up
to some canonical isomorphisms, and second, a direct sum is not a
sum in the complex of cubes. We will add some ``correction cubes''
in $\Z \Spn_{n-m,m}^{\square}(X)$, with $m\neq n$, in order to obtain
a chain morphism $T$.

We start by constructing the morphism step by step in the low
degree cases, deducing from the examples the key ideas.

\textbf{The transgression morphism for $n=1,2$.}
Let $E$ be a $1$-cube with canonical kernels. Then,
$$\delta \trn_1(E)=-\delta_1^0\trn_1(E)+\delta_1^1\trn_1(E)=-E^1+\delta_1^1\trn_1(E).$$
We know that there is a canonical isomorphism (which in this case
is the identity):
$$\delta_1^1\trn_1(E)\cong E^0\oplus E^2.$$
Hence, the differential of
$$T(E):=(-\tr\nolimits_1(E), E^0 \rightarrow \delta_1^1\trn_1(E) \rightarrow E^2)\in \Z C^{\square}_{0,1}(X)
\oplus \Z C^{\square}_{1,0}(X) $$ is exactly $E^1-E^0-E^2\in \Z
C_0(X).$

Let $E$ be a $2$-cube with canonical kernels. Then,
\begin{eqnarray*}
\delta\trn_2(E)&=&-\delta_1^0\trn_2(E)+\delta_1^1\trn_2(E)+\delta_2^0\trn_2(E)-\delta_2^1\trn_2(E)
\\ &=&
-\trn_1(\partial_1^1E)+\delta_1^1\trn_2(E)+\trn_1(\partial_2^1E)-\delta_2^1\trn_2(E).
\end{eqnarray*}
Let
$$T_{1}^i(E) = \tr\nolimits_1(\partial_i^0 E)
\rightarrow  \delta_i^1\tr\nolimits_2(E)  \rightarrow
\tr\nolimits_1(\partial_i^2 E),$$ where the arrows are defined by
the canonical isomorphism $\delta_i^1\trn_2(E)\cong
\trn_1(\partial_i^0E) \oplus \trn_1(\partial_i^2E)$. Let
$$T_2(E)= \quad\begin{array}{c} \xymatrix{ E^{00} \ar[r] \ar[d] &
\delta_1^1\tr\nolimits_1(E^{*0}) \ar[r] \ar[d] & E^{20}
 \ar[d] \\
\delta_1^1 \tr\nolimits_1(E^{0*})  \ar[r] \ar[d] &
\delta_1^1\delta_2^1\tr\nolimits_2(E)  \ar[r] \ar[d] &
\delta_1^1\tr\nolimits_1(E^{2*})
  \ar[d] \\
 E^{02} \ar[r] & \delta_1^1\tr\nolimits_1(E^{*2})  \ar[r] & E^{22},}\end{array}$$
with the arrows induced by the canonical isomorphisms of   lemma
\ref{trans}. Then, for $n=2$, we define
$$T(E):=(\tr\nolimits_2(E), \sum_{i=1,2} (-1)^{i} T_{1}^i(E),
T_{2}(E) )\in \Z C^{\square}_{0,2}(X)\oplus \Z
C^{\square}_{1,1}(X)\oplus \Z C^{\square}_{2,0}(X).$$ By lemma
\ref{trans3}, since $E$ is a cube with canonical kernels, these
cubes are all split. We fix the splittings to be the canonical
isomorphisms of lemma \ref{trans3}. Then, in $\Z
C^{\square}_{0,1}(X) \oplus \Z C^{\square}_{1,0}(X)$,
\begin{eqnarray*}
d_sT(E) &=& (\delta \tr\nolimits_2(E)+ \sum_{i=1,2} (-1)^{i}
dT_{1}^i(E), -\sum_{i=1,2} (-1)^{i}
\delta T_{1}^i(E)+d T_{2}(E)) \\
&=& \sum_{i=1,2}\sum_{j=0,1}(-1)^{i+j} T(\partial_i^jE).
\end{eqnarray*}

\textbf{The transgression morphism.}
Recall that if $\bj\in \{0,1,2\}^{m}$, we  defined in section
\ref{split}
$$s(\bj)=\#\{r|\ j_r=1\},$$ and the multi-index $u(\bj)=(u_1,\dots,u_{s(\bj)})$ with $u_i$ the
indices such that $j_{u_i}=1$ and ordered by $u_1<\dots <
u_{s(\bj)}$. Consider the set of multi-indices
$$J_n^m:=\{\bi=(i_1,\dots,i_{n-m})|\ 1\leq i_1 < \cdots < i_{n-m}\leq n\}.$$
Then, for every $\bi\in J_n^m$ and $\bj\in \{0,1,2\}^{n-m}$, we define
$$\bi(\bj)=(i_{u_1},i_{u_2}-1,\dots,i_{u_l}-l+1,\dots,i_{u_{s(\bj)}}-s(\bj)+1).$$

\begin{df}
Let $E$ be an $n$-cube with canonical kernels. For every $0\leq m
\leq n$ and $\bi\in J_n^m$, we define $T_{n-m,m}^{\bi}(E)\in \Z
C^{\square}_{n-m,m}(X)$ as the $(n-m)$-cube on $X\times \square^m$
given by:
\begin{enumerate*}[$\blacktriangleright$]
\item If $\bj\in \{0,2\}^{n-m}$  then
$$T_{n-m,m}^{\bi}(E)^{\bj}:=\tr\nolimits_m^{\bi}(E)^{\bj}=\trn_m(\partial_{\bi}^{\bj}E).$$
\item If $\bj\in \{0,1,2\}^{n-m}$ with $j_k= 1$ for some $k$, then
we define
$$T_{n-m,m}^{\bi}(E)^{\bj}:=
\big(\delta_{\bi(\bj)}^{\bun}\tr\nolimits_{m+s(\bj)}^{\partial_{u(\bj)}(\bi)}(E)\big)^{\partial_{u(\bj)}(\bj)}.$$
\end{enumerate*}
That is, we start by considering the first transgression of $E$,
iteratively, in the directions $s$ not in the multi-index $\bi$
and in those $i_s$ with $j_s= 1$. This gives a $(n-m-s(\bj))$-cube
on $X\times \square^{m+s(\bj)}$. Then, we apply $\delta^1_s$ for
each affine component coming from a direction with $j_s=1$. We
obtain a $(n-m)$-cube on $X\times \square^{m}$.
\end{df}
Observe that in the above definition, the second case
generalizes the first case.

\begin{lema}\label{transgressionsplit} For every $n$-cube $E$ with canonical kernels and every $\bi \in J_n^m$, the
cube $T_{n-m,m}^{\bi}(E)$ is split.
 \end{lema}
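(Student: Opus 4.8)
The plan is to exhibit an explicit splitting $f\colon \Sp(T_{n-m,m}^{\bi}(E))\to T_{n-m,m}^{\bi}(E)$ obtained by iterating the canonical isomorphisms of Lemma \ref{trans3}. Recall that the corners of $T_{n-m,m}^{\bi}(E)$ are the locally free sheaves $\trn_m(\partial_{\bi}^{\bj}E)$ with $\bj\in\{0,2\}^{n-m}$, so that, by the definition of a direct sum cube, $\Sp(T_{n-m,m}^{\bi}(E))^{\bj}=\bigoplus_{\bm\in\{0,2\}^{s(\bj)}}\trn_m(\partial_{\bi}^{\sigma_{u(\bj)}^{\bm}(\bj)}E)$ for every $\bj\in\{0,1,2\}^{n-m}$. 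On the other hand, by definition $T_{n-m,m}^{\bi}(E)^{\bj}=(\delta_{\bi(\bj)}^{\bun}\trn_{m+s(\bj)}^{\partial_{u(\bj)}(\bi)}(E))^{\partial_{u(\bj)}(\bj)}$, so it suffices to produce, for each $\bj$, a canonical isomorphism between these two sheaves which is the identity when $s(\bj)=0$, and then to check that these isomorphisms are compatible with the structure maps of the cube.

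\textbf{Construction of $f$.} First observe that since $E$ has canonical kernels, so do all its faces, and that for such cubes the canonical isomorphism of Lemma \ref{trans3} reads $\delta_r^1\trn_\ell^{\bi''}(E)\cong \trn_{\ell-1}^{\bi'}(\partial_{v_r}^0E)\oplus\trn_{\ell-1}^{\bi'}(\partial_{v_r}^2E)$: indeed $f_{v_r}^0$ is a canonical inclusion and transgression is exact and functorial, so $\im\trn_{\ell-1}^{\bi'}(f_{v_r}^0)=\trn_{\ell-1}^{\bi'}(\partial_{v_r}^0E)$; this is precisely \eqref{trivial}. Now fix $\bj$ and apply this identity successively in the $s(\bj)$ affine directions $\bi(\bj)$ that were introduced by transgressing the directions $u(\bj)$ of $E$, peeling them one at a time, say from the largest index downward. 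Each step replaces one transgression cube by the direct sum of the transgression cubes of its two faces $\partial^0$ and $\partial^2$ in the corresponding direction of $E$, so after all $s(\bj)$ steps one is left with $\bigoplus_{\bm\in\{0,2\}^{s(\bj)}}\trn_m(\partial_{\bi}^{\sigma_{u(\bj)}^{\bm}(\bj)}E)=\Sp(T_{n-m,m}^{\bi}(E))^{\bj}$. Composing these canonical isomorphisms defines $f^{\bj}$; when $s(\bj)=0$ there is nothing to peel and $f^{\bj}=\mathrm{id}$, so $f$ is the identity at the corners.

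\textbf{Compatibility with morphisms.} It remains to check that $f$ is an isomorphism of $(n-m)$-cubes, that is, that it intertwines the structure maps of $T_{n-m,m}^{\bi}(E)$ with those of $\Sp(T_{n-m,m}^{\bi}(E))$. For a direction $k$ with $j_k$ running over $0\to1\to2$, the structure maps of $T_{n-m,m}^{\bi}(E)$ are, by construction, induced by $\delta_r^0$, by the canonical isomorphism of Lemma \ref{trans3}, and by the functoriality of transgression. By the Remark following Lemma \ref{trans} these canonical isomorphisms commute with morphisms of cubes, and the identity $\delta_r^0\trn_\ell^{\bi''}(E)=\trn_{\ell-1}^{\bi'}(\partial_{v_r}^1E)$ together with \eqref{trivial} shows that, transported along $f$, the map $T_{n-m,m}^{\bi}(E)^{\sigma_k^0(\bj)}\to T_{n-m,m}^{\bi}(E)^{\bj}$ becomes the canonical inclusion into $T_{n-m,m}^{\bi}(E)^{\sigma_k^0(\bj)}\oplus T_{n-m,m}^{\bi}(E)^{\sigma_k^2(\bj)}$ and the map $T_{n-m,m}^{\bi}(E)^{\bj}\to T_{n-m,m}^{\bi}(E)^{\sigma_k^2(\bj)}$ becomes the projection. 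These are exactly the structure maps of the direct sum cube, so $(T_{n-m,m}^{\bi}(E),f)$ is split.

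\textbf{Main obstacle.} The only real difficulty is the combinatorial bookkeeping: one must verify that the transgression data $\partial_{u(\bj)}(\bi)$, the affine directions $\bi(\bj)$, and the resolved corners $\sigma_{u(\bj)}^{\bm}(\bj)$ produced by the iteration match the description of $\Sp(T_{n-m,m}^{\bi}(E))^{\bj}$ given above, independently of the order in which the $1$-directions are peeled off --- which follows from the cubical identities for the $\delta_r^1$ and the naturality of the isomorphisms of Lemma \ref{trans3} --- and that the individual $f^{\bj}$ glue into a single isomorphism of $(n-m)$-cubes rather than merely of the underlying graded objects. Organizing the argument as an induction on $s(\bj)$, peeling one direction at a time, keeps this manageable.
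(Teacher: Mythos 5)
Your proof is correct and takes essentially the same route as the paper: the paper's (very terse) proof simply invokes Lemma \ref{trans} to produce the isomorphism $T^{\bi}_{n-m,m}(E)^{\bj}\cong \bigoplus_{\bm\in\{0,2\}^{s(\bj)}}\tr\nolimits_m^{\bi}(E)^{\sigma_{u(\bj)}^{\bm}(\bj)}$ and stops there, whereas you additionally spell out why the canonical kernels hypothesis turns $\im\tr\nolimits(f^0_{v_r})$ into the genuine summand $\tr\nolimits(\partial_{v_r}^0 E)$, why the $s(\bj)$ copies of $\delta^1$ can be peeled off iteratively independently of the order, and why the resulting collection of sheaf isomorphisms actually intertwines the cube morphisms and restricts to the identity at the corners. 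Those verifications are exactly what the paper leaves implicit, and your organization of them (induction on $s(\bj)$, naturality via the remark following Lemma \ref{trans}) is the right way to make the one-line argument rigorous.
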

\begin{proof}
 If $\bj\in \{0,1,2\}^{n-m}$, it follows by lemma \ref{trans} that,
$$T^{\bi}_{n-m,m}(E)^{\bj}=\big(\delta_{\bi(\bj)}^{\bun}\tr\nolimits_{m+s(\bj)}^{\partial_{u(\bj)}(\bi)}(E)\big)^
{\partial_{u(\bj)}(\bj)} \cong \bigoplus_{\bm\in\{0,2\}^{s(\bj)}}
\tr\nolimits_{m}^{\bi}(E)^{\sigma_{u(\bj)}^{\bm}(\bj)}.
$$
\end{proof}
Observe that  for any morphism of $n$-cubes $E\rightarrow F$, there is a
commutative square
$$ \xymatrix{
T^{\bi}_{n-m,m}(E)^{\bj} \ar[r] \ar[d]_{\cong} & T^{\bi}_{n-m,m}(F)^{\bj} \ar[d]^{\cong} \\
\bigoplus\limits_{\bm\in\{0,2\}^{s(\bj)}}
\tr\nolimits_{m}^{\bi}(E)^{\sigma_{u(\bj)}^{\bm}(\bj)} \ar[r] &
\bigoplus\limits_{\bm\in\{0,2\}^{s(\bj)}}
\tr\nolimits_{m}^{\bi}(F)^{\sigma_{u(\bj)}^{\bm}(\bj)}. }$$

Finally, we consider
\begin{equation}
T_{n-m,m}(E) := \sum_{\bi\in J^{n-m}_n}(-1)^{|\bi|+\sigma(n-m)+m}
T_{n-m,m}^{\bi}(E)\in \Z  \Spn^{\square}_{n-m,m}(X),\end{equation}
where $\sigma(k)=1+\cdots + k= \frac{k(k+1)}{2}$.
\begin{prop}\label{trans2}
The map
\begin{eqnarray*}
\Z KC_n(X) & \xrightarrow{T} & \bigoplus_{m=0}^n \Z \Spn^{\square}_{n-m,m}(X) \\
E &\mapsto & T_{n-m,m}(E)
\end{eqnarray*}
is a chain morphism.
\end{prop}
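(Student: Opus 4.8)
The plan is to verify directly that $d_s\circ T = T\circ d$ on $\Z KC_*(X)$, by computing the faces of the split cubes $T^{\bi}_{n-m,m}(E)$ and matching terms, following the pattern of the worked cases $n=1,2$. First note that every face $\partial_i^jE$ of a cube with canonical kernels again has canonical kernels, so that $T(dE)=\sum_{i,j}(-1)^{i+j}T(\partial_i^jE)$ makes sense; moreover, for such a cube the isomorphism \eqref{iso} of Lemma \ref{trans} becomes $\delta_i^1\tr_n(E)\cong\tr_{n-1}(\partial_i^0E)\oplus\tr_{n-1}(\partial_i^2E)$ with a \emph{canonical} splitting (this is \eqref{trivial}). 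All splittings occurring below will be these canonical isomorphisms — and, in the cube directions, the induced $\hat f$ of Section \ref{split} — so every identity claimed holds on the nose in $\Z\Spn^{\square}_*(X)$. Since on the bidegree $(n-m,m)$ piece $d_s$ is $d\pm\delta$ with the simple-complex sign, it suffices to prove, for each $m$, that $d\,T_{n-m,m}(E)\pm\delta\,T_{n-1-m,m+1}(E)=\sum_{i=1}^n\sum_{j=0}^2(-1)^{i+j}T_{n-1-m,m}(\partial_i^jE)$.

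The key step is the computation of the faces of the building blocks $T^{\bi}_{n-m,m}(E)$. Write $i_1<\dots<i_{n-m}$ for the directions of $E$ retained by $\bi\in J_n^m$ and $v_1<\dots<v_m$ for those that are transgressed. Unwinding the definition of $T^{\bi}_{n-m,m}(E)$ and applying Lemma \ref{trans3} together with the cubical identities (and, for $\partial^1$, Lemma \ref{transgressionsplit}), one obtains, as split cubes: $\partial_l^0 T^{\bi}_{n-m,m}(E)=T^{\bi^{(l)}}_{n-1-m,m}(\partial_{i_l}^0E)$ and $\partial_l^2 T^{\bi}_{n-m,m}(E)=T^{\bi^{(l)}}_{n-1-m,m}(\partial_{i_l}^2E)$, where $\bi^{(l)}$ is $\bi$ with the entry $i_l$ deleted and all larger entries decreased by one; $\delta_r^0 T^{\bi}_{n-m,m}(E)=T^{\bi'}_{n-m,m-1}(\partial_{v_r}^1E)$ for the evident reindexing $\bi'$; and $\partial_l^1 T^{\bi}_{n-m,m}(E)$ is split and coincides, with its canonical splitting, with $\delta^1_{r(l)}T^{\bi^{(l)}}_{n-1-m,m+1}(E)$, where $r(l)$ is the position of $i_l$ among the transgressed directions of $\bi^{(l)}$; symmetrically every $\delta_r^1 T^{\bi}_{n-m,m}(E)$ is of this form. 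I would record the exact multi-index reindexings — routine given the conventions $u(\bj)$, $\bi(\bj)$ of the definition — but not grind through them.

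Substituting these into $d\,T_{n-m,m}(E)\pm\delta\,T_{n-1-m,m+1}(E)$, the $\partial^0$- and $\partial^2$-faces of the $T^{\bi}_{n-m,m}(E)$ contribute $\sum_{i}\sum_{j\in\{0,2\}}(-1)^{i+j}T_{n-1-m,m}(\partial_i^jE)$ and the $\delta^0$-faces of the $T^{\bi}_{n-1-m,m+1}(E)$ contribute $\sum_i(-1)^{i+1}T_{n-1-m,m}(\partial_i^1E)$; the point is that the sign $(-1)^{|\bi|+\sigma(n-m)+m}$ is chosen precisely so that, under $\bi\mapsto\bi^{(l)}$ — which sends $\sigma(n-m)\mapsto\sigma(n-1-m)$ and changes $|\bi|$ by $i_l+(n-m-l)$ — its product with the face sign $(-1)^{l+j}$ turns into $(-1)^{i_l+j}(-1)^{|\bi^{(l)}|+\sigma(n-1-m)+m}$, the sign carried by the matching summand of $T_{n-1-m,m}(\partial_{i_l}^jE)$, and the analogous computation handles the $\delta^0$-faces. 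Summed over $m$, this reproduces exactly $T(dE)$.

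It remains to see that the leftover terms cancel: the $\partial^1$-faces coming from the $d$-part and the $\delta^1$-faces coming from the $\delta$-part. By the third face formula above these occur in pairs $\partial_l^1 T^{\bi}_{n-m,m}(E)=\delta^1_{r(l)}T^{\bi^{(l)}}_{n-1-m,m+1}(E)$ — one arising in $d\,T_{n-m,m}(E)$, the other in $\delta\,T_{n-1-m,m+1}(E)$ — and a sign check (the simple-complex sign on the lower piece, played against the $\sigma(n-m)\mapsto\sigma(n-1-m)$ shift and the $|\bi|\mapsto|\bi^{(l)}|$ drop) shows that they enter with opposite signs and cancel; this is the global analogue of the "terms that cancel each other" in the $n=2$ sample computation. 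The main obstacle is precisely this threefold sign bookkeeping — the global factor $(-1)^{|\bi|+\sigma(n-m)+m}$, the two families of face signs $(-1)^{l+j}$ and $(-1)^{r+j}$, and the simple-complex sign — together with setting up the reindexing dictionary $\bi\leftrightarrow\bi^{(l)}$. A secondary point demanding care is to keep track that all splittings in play are the canonical isomorphisms of Lemma \ref{trans3}, so that the $\partial^1$-faces, computed via the $\hat f$ of Section \ref{split}, genuinely equal the corresponding $\delta^1$-faces \emph{in} $\Z\Spn^{\square}_*(X)$ and not merely up to an unrecorded isomorphism.
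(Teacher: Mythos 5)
The proposal is correct and follows essentially the same approach as the paper: it verifies the bidegree-$(n-m,m)$ component of $d_s T=Td$ by computing all faces of the building blocks $T^{\bi}_{n-m,m}(E)$ via Lemma~\ref{trans3}, matching the $\partial^0$-, $\partial^2$-, and $\delta^0$-faces against the summands of $T(dE)$ and cancelling the $\partial^1$-faces against the $\delta^1$-faces. Your reindexings $\bi^{(l)}$, $v_r$, $r(l)$ are precisely the paper's $\partial_r(\bi)-\bun_{r+1}^{n-m}$ and the change of variables $s=r+l$ appearing in the computation of the term $(1)$, so the two write-ups are the same argument at different levels of explicitness.
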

\begin{proof}
We have to see that  $T$ commutes with the differentials. Remember
that the differential $d_s$ in the simple complex  is defined, in
the $(n-m,m)$-component, by $d + (-1)^{n-m} \delta$. Therefore, we
have to see that for every $m=0,\dots,n-1$, the equality
\begin{equation}\label{eq5}
d T_{n-m,m}+(-1)^{n-m-1}\delta T_{n-m-1,m+1} = T_{n-m-1,m}d
\end{equation}
holds. The right hand side is
\begin{eqnarray*}
T_{n-m-1,m}d&=& \sum_{r=1}^n \sum_{j=0}^2(-1)^{r+j} T_{n-m-1,m}\partial_r^j\\
&=& \sum_{r=1}^n \sum_{j=0}^2(-1)^{r+j} \sum_{\bi\in
J^{n-m-1}_{n-1}}(-1)^{|\bi|+\sigma(n-m-1)+m}
T_{n-m-1,m}^{\bi}\partial_r^j.
\end{eqnarray*}
We compute the terms $d$ and $\delta$ on the left hand side separately.
\begin{eqnarray*}
dT_{n-m,m}  &=& \sum_{r=1}^{n-m}\sum_{\bi\in
J^{n-m}_n}(-1)^{|\bi|+\sigma(n-m)+r+m} (\partial_r^0-\partial_r^1
+
\partial_r^2 )T_{n-m,m}^{\bi}.
\end{eqnarray*}
By definition,
$\partial_r^lT_{n-m,m}^{\bi}=T_{n-m-1,m}^{\partial_r(\bi)-\bun_{r+1}^{n-m}}\partial_{i_r}^l$
if $l=0,2$.

Since $(-1)^{\sigma(n-m)+n-m}=(-1)^{\sigma(n-m-1)}$, we obtain
\begin{eqnarray*}
dT_{n-m,m}
&=&  \sum_{r=1}^n (-1)^r [T_{n-m-1,m}\partial_{i_r}^0+
T_{n-m-1,m}\partial_{i_r}^2]- \\ &&  - \sum_{\bi\in
J^{n-m}_{n}}\sum_{r=1}^{n} (-1)^{|\bi|+r+\sigma(n-m)+m}
\partial_r^1 T_{n-m,m}^{\bi}.
\end{eqnarray*}
For the summand corresponding to the differential $\delta$, one
has that $$(-1)^m\delta T_{n-m-1,m+1} = (1)+(2), $$ with
\begin{eqnarray*}
(1) &=& -\sum_{r=1}^m \sum_{\bi\in
J^{n-m-1}_n}(-1)^{|\bi|+\sigma(n-m-1)+r}
\delta_r^0T_{n-m-1,m+1}^{\bi},
 \\
(2) &=& \sum_{r=1}^m \sum_{\bi\in
J^{n-m-1}_n}(-1)^{|\bi|+\sigma(n-m-1)+r} \delta_r^1
T_{n-m-1,m+1}^{\bi}.
\end{eqnarray*}
For every $\bj\in \{0,1,2\}^{n-m-1}$, we obtain by definition
$$(\delta_r^0 T_{n-m,m}^{\bi})^{\bj} =
\left(\delta_r^0\delta_{\bi(\bj)}^{\bun}\tr\nolimits_{m+s(\bj)}^{\partial_{u(\bj)}(\bi)}\right)^
{\partial_{u(\bj)}(\bj)}.$$
Using the commutation rules of the faces $\delta_*^*$ and $\partial_*^*$, we
obtain that
$$\delta_r^0 T_{n-m,m}^{\bi} =
T_{n-m-1,m}^{\bi-\bun_{l+1}^{n-m-1}}\partial_{r+l}^1,$$ where $l$ is the
maximal between the indices $k$, such that $r+k-1\geq i_k$. Therefore,
\begin{eqnarray*}
(1)&=& \sum_{r=1}^m \sum_{l=0}^{n-m-1}\sum_{\substack{\bi\in J^{n-m-1}_n \\
l=\max\{k|r+k-1\geq i_k\}}}(-1)^{|\bi|
+\sigma(n-m-1)+r+1} T_{n-m-1,m}^{\bi-\bun_{l+1}^{n-m-1}}\partial_{r+l}^1\\
&=&\sum_{s=1}^n \sum_{l=0}^{n-m-1}\sum_{\substack{\bi\in J^{n-m-1}_n \\
l=\max\{k|s-l+k-1\geq i_k\}}}(-1)^{|\bi|
+\sigma(n-m-1)+1+s-l} T_{n-m-1,m}^{\bi-\bun_{l+1}^{n-m-1}}\partial_{s}^1  \\
&=&\sum_{s=1}^n \sum_{\bi\in J^{n-m-1}_{n-1}
}(-1)^{|\bi|+\sigma(n-m-1)+n-m+s} T_{n-m-1,m}^{\bi}\partial_{s}^1
\\ &=& (-1)^{n+1}\sum_{s=1}^n
(-1)^{s+1}T_{n-m-1,m}\partial_{s}^1.
\end{eqnarray*}
All that remains is to see that
$$(2)= (-1)^{n-m-1}\sum_{\bi\in
J^{n-m}_{n}}\sum_{r=1}^{n} (-1)^{|\bi|+r+\sigma(n-m)}
\partial_r^1 T_{n-m,m}^{\bi} =:(*).$$
Recall that
$$(\partial_r^1 T_{n-m,m}^{\bi})^{\bj}=(T^{\bi}_{n-m,m})^{s_r^{1}(\bj)}=
\left(\delta_{\bi(s_r^1\bj)}^{\bun}\tr\nolimits_{m+s(\bj)+1}^{\partial_{u(\bj)}\partial_r(\bi)}
\right)^{\partial_{u(\bj)}(\bj)}.$$
An easy calculation shows that
$\delta_{\bi(s_r^1\bj)}^{\bun}=\delta_{\bi(\bj)}^1\delta_{i_r-r+1}^{\bun}.$ Therefore,
$$\partial_r^1 T_{n-m,m}^{\bi} =
\delta_{i_r-r+1}^1 T^{\partial_r\bi}_{n-m,m} $$ and hence,
\begin{eqnarray*}
(*) &=& \sum_{\bi\in J^{n-m}_{n}}\sum_{r=1}^{n} (-1)^{|\bi|+r+\sigma(n-m)}
\partial_r^1 T_{n-m,m}^{\bi} \\
&=& \sum_{\bi\in J^{n-m}_{n}}\sum_{r=1}^{n} (-1)^{|\bi|+r+\sigma(n-m)}
\delta_{i_r-r+1}^1T^{\partial_r\bi}_{n-m,m} \\ &=& \sum_{\bi\in
J^{n-m-1}_{n}}\sum_{r=1}^{n} (-1)^{|\bi|+r+1+\sigma(n-m)}
\delta_r^1T^{\partial_r\bi}_{n-m,m} \\ &=&
 \sum_{\bi\in
J^{n-m-1}_{n}}\sum_{r=1}^{n} (-1)^{(n-m-1)+|\bi|+r+\sigma(n-m-1)}
\delta_r^1T^{\partial_r\bi}_{n-m,m}= (2),
\end{eqnarray*}
and the proposition is proved.
\end{proof}

\section{Adams operations on rational algebraic K-theory}
\index{Adams operations} To sum up, we have defined the following
chain morphisms:
\begin{enumerate*}[$\blacktriangleright$]
\item In section \ref{transgression}, we defined a morphism
$$\Z C_*(X) \xrightarrow{T} \Z \Spn^{\square}_*(X).$$
That is,  a collection of split cubes on $X\times \square^*$ is
assigned to  every $n$-cube. \item  In section \ref{adamssplit},
for every $k\geq 1$, we defined a chain morphism
$$\Psi^k: \Z \Spn_*(X) \rightarrow \Z C_*(X).$$
\end{enumerate*}

Since the maps $\Psi^k$ and $T$ are functorial, they are natural
on schemes $X$ in $\mathcal{C}_B$. Therefore, there is an induced
map of 2-iterated chain complexes
$$\Psi^k: \Z \Spn^{\square}_{*,*}(X)\rightarrow \Z C^{\square}_{*,*}(X)
\rightarrow \wZ C_{*,*}^{\widetilde{\square}}(X),$$
which induces a morphism on the associated simple complexes
$$ \Psi^k: \Z \Spn^{\square}_{*}(X)\rightarrow \Z C^{\square}_{*}(X)\rightarrow \wZ C_{*}^{\widetilde{\square}}(X).$$
The composition with the morphism $T$ gives a morphism
$$\Psi^k: \Z C_*(X) \rightarrow \Z C^{\square}_*(X)\rightarrow \wZ C^{\widetilde{\square}}_*(X)
\cong N C^{\widetilde{\square}}_*(X).$$ Finally, considering the normalized
complex of cubes of lemma \ref{N}, we define
$$\Psi^k: NC_*(X) \hookrightarrow
\Z C_*(X) \rightarrow NC^{\widetilde{\square}}_*(X).$$

\begin{cor}\label{adamsalgebraic} For every $k\geq 1$, there is a well-defined chain morphism
$$\Psi^k: NC_*(X) \rightarrow  N C^{\widetilde{\square}}_*(X).$$
If $X$ is a regular noetherian scheme, then for every $n$, there are induced
morphisms
$$\Psi^k: K_n(X)\otimes \Q \rightarrow K_n(X)\otimes \Q.$$
\end{cor}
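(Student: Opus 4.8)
The plan is to read off $\Psi^k$ on $NC_*(X)$ as a concatenation of the chain morphisms constructed in the previous sections, and then to pass to rational homology via the quasi-isomorphisms of Corollary~\ref{cubes1}, Proposition~\ref{affine} and Proposition~\ref{dold}. Concretely, for the chain-level assertion I would compose: the inclusion $NC_*(X)\hookrightarrow \Z C_*(X)$ of Proposition~\ref{N}; the transgression morphism $T\colon \Z C_*(X)\to \Z\Spn^{\square}_*(X)$ of Proposition~\ref{trans2}, precomposed with the morphism $\lambda$ of Proposition~\ref{deg1} onto cubes with canonical kernels (by Lemma~\ref{transgressionsplit}, $T$ actually lands among split cubes); the Adams operation $\Psi^k\colon \Z\Spn_*(Y)\to \Z C_*(Y)$ of Corollary~\ref{adamssplitcor} applied with $Y=X\times\square^n$ for every $n$; and finally the canonical projection $\Z C^{\square}_*(X)\to \wZ C^{\widetilde{\square}}_*(X)$ followed by the identification $\wZ C^{\widetilde{\square}}_*(X)\cong NC^{\widetilde{\square}}_*(X)$, which results from applying Proposition~\ref{N} on each $X\times\square^n$ naturally in $n$. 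The only point needing comment is that the maps $\Psi^k$ of Corollary~\ref{adamssplitcor} assemble into a morphism of the $2$-iterated complex $\Z\Spn^{\square}_{*,*}(X)\to \Z C^{\square}_{*,*}(X)$: commutation with the internal differential $d$ is Corollary~\ref{adamssplitcor} applied with $X$ replaced by $X\times\square^n$, while commutation with $\delta=\sum(-1)^{i+j}\delta^j_i$ is immediate from the functoriality of $\Psi^k$, since each $\delta^j_i$ is pullback along a morphism of schemes. Passing to the associated simple complexes then yields the desired chain morphism $\Psi^k\colon NC_*(X)\to NC^{\widetilde{\square}}_*(X)$.

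The one genuine subtlety is the failure of $\Psi^k$ to be literally a chain morphism before composing with $\varphi$, recorded in Remark~\ref{iso2}; but it is already absorbed into Corollary~\ref{adamssplitcor}. The reordering of direct summands by the lexicographic order on multi-indices is carried out only after $\varphi$, on locally free sheaves that are direct sums of tensor, symmetric and exterior powers of the sheaves $E^{\bj}$, and the face maps $\delta^j_i$ of $\square^{\cdot}$ act entrywise on such sheaves; hence the reordering commutes with the $\square$-direction and no extra bookkeeping in the transgression variables is needed. Accordingly I do not expect any real obstacle in this corollary: the substance lies entirely in the constructions already completed, and what remains is to track which arrows are chain morphisms and to invoke the homology computations.

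Finally, for the second assertion let $X$ be regular noetherian. Taking homology with $\Q$-coefficients of the chain morphism $\Psi^k\colon NC_*(X)\to NC^{\widetilde{\square}}_*(X)$, and using $H_n(NC_*(X),\Q)\cong K_n(X)\otimes\Q$ from Corollary~\ref{cubes1} on the source together with $H_n(NC^{\widetilde{\square}}_*(X),\Q)\cong H_n(NC_*(X),\Q)\cong K_n(X)\otimes\Q$ from Proposition~\ref{affine} (respectively Proposition~\ref{dold} in the projective variant) on the target, one obtains the induced endomorphism $\Psi^k\colon K_n(X)\otimes\Q\to K_n(X)\otimes\Q$.
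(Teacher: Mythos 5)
Your proposal is correct and follows essentially the same route as the paper: the chain-level morphism is assembled exactly as in the text preceding the corollary (inclusion from Proposition~\ref{N}, then $\lambda$ and $T$ from Propositions~\ref{deg1} and~\ref{trans2}, then the level-wise $\Psi^k$ of Corollary~\ref{adamssplitcor} promoted to a morphism of $2$-iterated complexes by functoriality, then the quotient identification $\wZ C^{\widetilde{\square}}_*(X)\cong NC^{\widetilde{\square}}_*(X)$), and the rational statement is Corollary~\ref{cubes1} together with Proposition~\ref{affine}. Your extra remarks about the commutation of $\Psi^k$ with $\delta$ and about the lexicographic reordering being compatible with pullback along the $\square$-faces are correct and make explicit what the paper compresses into the one-sentence appeal to functoriality.
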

\begin{proof}
It is a consequence of lemmas \ref{cubes1} and \ref{dold}.
\end{proof}

\begin{theo}\label{myadamsgood} Let $X$ be a regular noetherian scheme of finite Krull dimension.
For every $k\geq 1$ and $n\geq 0$, the morphisms
$$\Psi^k: K_n(X)\otimes \Q \rightarrow K_n(X)\otimes \Q$$
of corollary \ref{adamsalgebraic}, agree with the Adams operations defined by
Gillet and Soul\'e in \cite{GilletSouleFiltrations}.
\end{theo}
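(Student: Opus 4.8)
The plan is to reduce the statement to the general comparison theorem of \cite{Feliu1}. That theorem is a rigidity result: it gives conditions under which two natural endomorphisms of rational algebraic $K$-theory over $\mathcal{C}_B$ — presented in any of the usual ways, e.g.\ by chain morphisms of complexes computing $K_*(-)\otimes\Q$, or by maps in the homotopy category of simplicial sheaves — necessarily coincide once they agree on $K_0(X)\otimes\Q$ for every $X$. To apply it, both operations must first be cast as morphisms of the \emph{same} presheaf. The operation $\Psi^k$ of Corollary \ref{adamsalgebraic} is a chain morphism $NC_*(X)\to NC_*^{\widetilde{\square}}(X)$, and by Propositions \ref{affine} and \ref{dold} the structural map $NC_*(X)\to NC_*^{\widetilde{\square}}(X)$ is a rational quasi-isomorphism; composing $\Psi^k$ with a quasi-inverse of this map yields a well-defined endomorphism of the rational $K$-theory presheaf, functorial in $X$ since every ingredient of the construction (the normalized complex of cubes, the transgression morphism $T$ of Proposition \ref{trans2}, the morphisms $\lambda$, $\mu$, $\varphi$, and the combinatorial morphism $\Psi^k$ on split cubes of Corollary \ref{adamssplitcor}) is built from functorial operations on locally free sheaves.

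Next I would check the structural hypotheses of \cite{Feliu1}. Additivity holds at the chain level: $\Psi^k$ is a $\Q$-linear chain morphism and the direct sum of cubes is the addition of $\Z C_*(X)$, so the induced maps on $K_n\otimes\Q=H_n$ respect the group structure; the same holds for the Gillet--Soul\'e operations by their construction via infinite loop spaces. If the comparison theorem moreover demands compatibility with the product (equivalently, with the pre-$\lambda$ structure), this is exactly where the identity \eqref{direct}, $\Psi^k(E\oplus F)\cong\bigoplus_m\Psi^{k-m}(E)\otimes\Psi^m(F)$, enters: one must promote it to a chain-level compatibility of $\Psi^k$ with a product on $NC_*$ inducing the $K$-theory product, unwinding it through the definitions of the cubes $C_{\bi}(E)$ and $\wC_{\bi}(E)$, of $\varphi$, of $\mu$ and of $T$, along the lines of the low-degree computations of Section \ref{adams2}. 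For the Gillet--Soul\'e operations this is part of \cite{GilletSouleFiltrations}.

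Finally comes the degree-$0$ comparison. For a $0$-cube $E$ — a locally free sheaf — the transgression morphism is the identity in cubical degree $0$ and $\lambda(E)=E$, so $\Psi^k(E)$ represents, in $H_0(NC_*(X))\otimes\Q=K_0(X)\otimes\Q$, the image under $\varphi_1$ of the $k$-th Koszul complex $\Psi^k(E)$, namely the class $\sum_{p}(-1)^{k-p+1}(k-p)\,[S^pE\otimes\textstyle\bigwedge^{k-p}E]$. By the choice of signs (made precisely so as to match Grayson's construction for $n=0$, cf.\ \cite{Grayson1}) and by Gillet's formula for the Adams operations as the secondary Euler characteristic of the Koszul complex, this class is the classical Adams operation $\psi^k[E]$; since $K_0(X)\otimes\Q$ is generated by classes of locally free sheaves, $\Psi^k$ agrees on $K_0$ with the classical $\psi^k$, which is also the degree-$0$ part of the Gillet--Soul\'e operation. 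Invoking \cite{Feliu1} then gives $\Psi^k=\psi^k_{GS}$ on every $K_n(X)\otimes\Q$, and the same argument run with Grayson's operation proves the parallel statement for \cite{Grayson1}.

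I expect the main obstacle to be the structural verification of the second paragraph: depending on the exact hypotheses of \cite{Feliu1}, one may have to exhibit an explicit chain homotopy witnessing the multiplicativity of $\Psi^k$, and tracing \eqref{direct} through the highly combinatorial definitions of $C_{\bi}(E)$, $\wC_{\bi}(E)$, $\varphi_2$ and $\mu$ — keeping track of all signs and of the reorderings of direct summands flagged in Remark \ref{iso2} — is delicate. If instead \cite{Feliu1} only requires naturality together with additivity and agreement on $K_0$, this obstacle disappears and the argument reduces to the first and third paragraphs.
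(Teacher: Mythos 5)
Your proposal takes essentially the same route as the paper: reduce to the comparison theorem of \cite{Feliu1}, using that $\Psi^k$ is functorial on $\mathcal{C}_B$ (hence, as the paper emphasizes, extends to simplicial schemes) and that on $K_0$ it reduces, via the secondary Euler characteristic of the Koszul complex, to the classical Adams operation. The cautions in your second paragraph are unnecessary: the paper's proof invokes only naturality and $K_0$-agreement, not a chain-level multiplicativity, so as you suspected the argument collapses to your first and third paragraphs.
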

\begin{proof} We have constructed a functorial morphism, at the level of chain
complexes, which by definition can be extended to simplicial schemes. Moreover,
 they induce the usual Adams operations on the $K_0$-groups, i.e.
the Adams operations derived from the lambda structure coming from the exterior
product of locally free sheaves.  Then, the statement follows from corollary 4.4
 in \cite{Feliu1}.
\end{proof}

\begin{cor}
The Adams operations defined here satisfy the usual identities for any finite
dimensional regular noetherian scheme.
\end{cor}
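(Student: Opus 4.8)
The plan is to reduce everything to Theorem \ref{myadamsgood}. For a finite dimensional regular noetherian scheme $X$, that theorem identifies the morphisms $\Psi^k\colon K_n(X)\otimes\Q\to K_n(X)\otimes\Q$ of Corollary \ref{adamsalgebraic} with the Adams operations $\psi^k$ constructed by Gillet and Soul\'e in \cite{GilletSouleFiltrations}. Consequently every identity proved there for the $\psi^k$ holds verbatim for the operations constructed here: the normalization $\Psi^1=\mathrm{id}$, functoriality in $X$, the composition law $\Psi^k\circ\Psi^l=\Psi^{kl}$, the multiplicativity of each $\Psi^k$ for the product on $\bigoplus_n K_n(X)\otimes\Q$, and the eigenvalue property that $\Psi^k$ acts as multiplication by $k^i$ on the $i$-th graded piece of the $\gamma$-filtration. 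This is the whole content of the corollary.

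For the identities $\Psi^1=\mathrm{id}$ and $\Psi^k\circ\Psi^l=\Psi^{kl}$ one may alternatively give an argument that bypasses \cite{GilletSouleFiltrations}, along the lines of the proof of Theorem \ref{myadamsgood}: both members of each equality are functorial chain endomorphisms of $NC_*(-)$ (up to the canonical quasi-isomorphisms of Propositions \ref{affine} and \ref{dold}) that extend to simplicial schemes and that induce on $K_0$ the corresponding relations $\psi^1=\mathrm{id}$, $\psi^k\psi^l=\psi^{kl}$ coming from the $\lambda$-ring structure on $K_0$ defined by exterior powers; the comparison result, Corollary 4.4 of \cite{Feliu1}, then forces them to coincide on all $K_n(X)\otimes\Q$. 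Multiplicativity is the one identity for which I do not see a comparably short intrinsic proof, since the product structure on the complex of cubes is not developed in this paper; for it one simply invokes Theorem \ref{myadamsgood} together with the multiplicativity of the Gillet--Soul\'e operations.

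The only real work is bookkeeping: pinning down precisely which relations are meant by ``the usual identities'', and checking that each composite $\Psi^k\circ\Psi^l$ is again a chain morphism of the type to which Corollary 4.4 of \cite{Feliu1} applies (functorial, defined over simplicial schemes, inducing the expected operation on $K_0$). Given the functoriality built into the transgression morphism of Proposition \ref{trans2} and into $\Psi^k$ on split cubes (Corollary \ref{adamssplitcor}), these verifications are routine, so I expect no genuine obstacle beyond the careful statement of the list of identities.
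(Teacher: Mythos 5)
Your argument is the same as the paper's (implicit) proof: the corollary follows directly from Theorem \ref{myadamsgood}, since the Gillet--Soul\'e operations of \cite{GilletSouleFiltrations} are known to satisfy the usual Adams identities. The alternative sketch via Corollary 4.4 of \cite{Feliu1} for $\Psi^1=\mathrm{id}$ and the composition law is a reasonable aside, though note that $\Psi^k$ takes values in $N C^{\widetilde{\square}}_*(X)$ rather than $N C_*(X)$, so a composite $\Psi^k\circ\Psi^l$ is not literally a chain endomorphism of $NC_*(-)$ without further work to pass back along the quasi-isomorphisms of Propositions \ref{affine} and \ref{dold}; the direct appeal to Theorem \ref{myadamsgood} is the clean route.
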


\providecommand{\bysame}{\leavevmode\hbox to3em{\hrulefill}\thinspace}
\providecommand{\MR}{\relax\ifhmode\unskip\space\fi MR }
\providecommand{\MRhref}[2]{%
  \href{http://www.ams.org/mathscinet-getitem?mr=#1}{#2}
}
\providecommand{\href}[2]{#2}

\end{document}